\documentclass[10pt,letter]{amsart}
\usepackage[utf8]{inputenc}
\usepackage{amsmath, calligra, mathrsfs}
\usepackage[pagebackref,hidelinks]{hyperref}
\usepackage[all]{xy}
\usepackage{amsfonts} %WHY THIS?
\usepackage{amssymb} %WHY THIS?
\usepackage{graphics} %WHY THIS?
\usepackage{amscd} %WHY THIS?
\usepackage{enumitem} %WHY THIS?
\usepackage{todonotes} %WHY THIS?
\usepackage{tikz-cd}
\usepackage[OT2,T1]{fontenc}
\usepackage{mathtools}
\usepackage{xcolor}
\usepackage{chngcntr}
\usepackage{textcomp}
\definecolor{winered}{RGB}{136, 0, 21}

% \usepackage[pagewise]{lineno}
% \linenumbers

\counterwithin{equation}{subsection}

\hypersetup{
  colorlinks   = true, %Colours links instead of ugly boxes
  urlcolor     = teal, %Colour for external hyperlinks
  linkcolor    = teal, %Colour of internal links
  citecolor   = teal %Colour of citations
}

\usepackage{stmaryrd}
%%........................................................................
\newtheorem{thm}{Theorem}[section]
\newtheorem{prop}[thm]{Proposition}
\newtheorem{cor}[thm]{Corollary}
\newtheorem{lem}[thm]{Lemma}

\newtheorem*{thmen}{Theorem}

\theoremstyle{definition}
\newtheorem{rmk}[thm]{Remark}
\newtheorem{defi}[thm]{Definition}

\DeclareMathOperator{\bA}{\mathbb{A}}

\DeclareMathOperator{\bF}{\mathbb{F}}

\DeclareMathOperator{\Z}{\mathbb{Z}}

\newcommand{\cE}{\mathcal{E}}

\newcommand{\fp}{\mathfrak{p}}

\newcommand{\fX}{\mathfrak{X}}

\newcommand{\ord}{\mathrm{ord}}
\newcommand{\can}{\mathrm{can}}
\newcommand{\naive}{\operatorname{naive}}

\title{Higher Hida  theory for Drinfeld modular curves}
\author{Daniel Barrera Salazar} 
\address{Daniel Barrera \newline Departamento de Matem\'atica y Ciencia de la Computaci\'on, Universidad de Santiago de Chile, Las Sophoras 173, Estación Central, Santiago} 
 \thanks{D.BS.'s research was partly funded by ECOS230025, Anid Fondecyt Regular grant 1241702 and a Lluís Santaló Visiting Position funded by the CRM in Barcelona}
\email{daniel.barrera.s@usach.cl}

\author{Héctor del Castillo} \thanks{H.dC.'s research was supported by ANID Postdoctoral Project 3220656 and a Mitacs Globalink Research Award FR115169.}
\address{H\'ector de Castillo \newline Departamento de Matem\'atica y Ciencia de la Computaci\'on, Universidad de Santiago de Chile, Las Sophoras 173, Estación Central, Santiago}
\email{hector.math@gmail.com}

\author{Giovanni Rosso}\thanks{G.R.'s research was partly funded by the NOVA-FRQNT-CRSNG grant 325940, the NSERC grant RGPIN-2018-04392  and a Mitacs Globalink Research Award FR115169.}
\address{Giovanni Rosso \newline Department of Mathematics and Statistics, Concordia University, 1455 De Maisonneuve Blvd. W.,
Montreal, QC  H3G 1M8
Canada }
\email{giovanni.rosso@concordia.ca}

\begin{document}
\begin{abstract}
 Inspired by the construction of Higher Hida theory of Boxer and Pilloni, we develop Higher Hida theory for the cohomology of the line bundles of Drinfeld modular forms on the Drinfeld modular curve. We also interpolate Serre duality.
\end{abstract}
\maketitle
\tableofcontents

\section{Introduction}

Hida theory studies the deformation of ordinary modular forms and has been developed in great generality for several different types of automorphic forms over number fields. It is a degree zero cohomology theory. Higher Hida theory instead develops the deformations of higher degree cohomology classes: although very recent, it has produced several important arithmetic applications such as in modularity results as in \cite{BGGP} or Bloch--Kato conjectures as in \cite{LoefflerZerbes}.  

Boxer and Pilloni have developed higher Hida theory for modular curves in \cite{BoPi2022} and they were able to obtain this theory for more general situations, such as for Siegel Shimura varieties, in \cite{BoPi2023}.

This paper wants to study deformation not of modular forms, but of Drinfeld modular forms, a function field analogue of modular forms. So far, the theory has been developed only for degree zero coherent cohomology groups, see \cite{NiRo2021}. Our goal is to construct, in tandem, Hida theory in degree zero and higher Hida theory in degree one for the Drinfeld modular curve with a specific tame level structure.
 We follow mainly the constructions  of \cite{BoPi2022} with some {\it ad hoc} modifications to deal with the fact that our Iwasawa algebra is not Noetherian, see for example \ref{lem:iwalg} and that the Drinfeld module of rank two are not necessarily auto dual, see for example Section \ref{sec:dualityfamilies}. %H: small phrase for the autodual

We expect this will open new directions in the function field case and be a baby case, for example, for the study of higher Hida theory for the general linear groups, or higher Coleman theory.\\  
% We also prove a Serre duality between these cohomology groups. 

Now, we describe the results and tools used to prove our results. Fix a prime number $p$ and $q$ a power of $p$. Let $A=\mathbb F_q[T]$ and $\mathfrak p$ be a prime ideal of $A$. Denote by $A_{\mathfrak p}$ the $\mathfrak p$-completion of $A$. Let $X\to \operatorname{Spec}(A_{\mathfrak p})$ be the compactified Drinfeld modular curve of level $\Gamma^\Delta_1(\mathfrak n)$ over $A_{\mathfrak p}$. Let $\omega$ be the Drinfeld modular forms line bundle of $X$ and for an integer $k\in \mathbb Z$ write $\omega^k$ for $\omega^{\otimes k}$. We also denote $\omega_D$ the dual Drinfeld modular forms bundle of $X$ (see Section \ref{subsec:Serredual}).

Let $\Lambda=A_{\mathfrak p}\llbracket A_{\mathfrak p}^\times\rrbracket$ be the Iwasawa algebra. Each integer $k\in \mathbb Z$ defines the character $x\mapsto x^k$ of $A_{\mathfrak p}$, and thus a morphism $k \colon \Lambda \to A_{\mathfrak p}$.  The main results of this paper, Theorems \ref{thm:projandinter} and \ref{thm:dualityfamilies}, are summarized in the following statement.
\begin{thmen}
    There are two finite type $\Lambda$-modules $M$ and $N$ carrying an action of the Hecke algebra of level prime to $p$ such that there are canonical Hecke equivariant isomorphisms for all $k\geq 3$:  \[M\otimes_{\Lambda,k}A_\fp\cong e(T_\mathfrak p)\operatorname{H}^0(X,\omega^k),\]
    \[N \otimes_{\Lambda,k}A_\fp\cong e(T_\mathfrak p)\operatorname{H}^1(X,\omega^{1-k}\otimes\omega_D(-2D)),\]
    where $D$ is the cusps divisor and $e(T_\mathfrak p)$ is the projector of a Hecke operator $T_\mathfrak p$. Furthermore, There is a  perfect pairing $M\times N \to \Lambda$ which interpolates the Serre duality pairing.
\end{thmen}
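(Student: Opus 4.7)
The plan is to adapt the strategy of Boxer--Pilloni \cite{BoPi2022} to the Drinfeld modular curve $X$ over $A_\fp$. I would first pass to the ordinary locus $\mathfrak{X}^{\mathrm{ord}} \subset \widehat{X}$, the $\fp$-adic formal scheme obtained by removing the supersingular locus, over which the universal Drinfeld module has a canonical quotient that is $\fp$-divisible of height one. Trivialising the $\fp^{n}$-torsion of this quotient defines an Igusa tower $\mathfrak{I}_\infty \to \mathfrak{X}^{\mathrm{ord}}$, pro-\'etale Galois with group $A_\fp^\times$. Pushing forward the universal character $A_\fp^\times \hookrightarrow \Lambda^\times$ along the tower yields a line bundle $\omega^{\chi}$ on $\mathfrak{X}^{\mathrm{ord}} \widehat{\otimes} \Lambda$ whose specialization at each integer $k$ recovers $\omega^{k}|_{\mathfrak{X}^{\mathrm{ord}}}$, and similarly one produces the interpolation sheaf $\omega^{1-\chi}\otimes \omega_D(-2D)$ on the dual side.

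Then I would define
\[
M := e(T_\fp)\, \operatorname{H}^0\bigl(\mathfrak{X}^{\mathrm{ord}},\, \omega^{\chi}\bigr), \qquad N := e(T_\fp)\, \operatorname{H}^1\bigl(\mathfrak{X}^{\mathrm{ord}},\, \omega^{1-\chi} \otimes \omega_D(-2D)\bigr),
\]
where the second cohomology is computed with an appropriate local-cohomology-at-the-supersingular-locus structure so that $T_\fp$ admits an ordinary projector $e(T_\fp) = \lim_n T_\fp^{n!}$. The specialization isomorphisms at $k \geq 3$ would follow from a classicity theorem: the restriction $\operatorname{H}^i(X, \omega^k) \to \operatorname{H}^i(\mathfrak{X}^{\mathrm{ord}}, \omega^k)$ becomes an isomorphism after ordinary projection in degree $i = 0$, and the analogous statement for the dual sheaf in degree $i = 1$ follows after trimming the supersingular tube. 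The bound $k \geq 3$ enters through the Kodaira--Spencer-type identification and the accompanying vanishing of the other cohomological degree on both sides.

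For finiteness over $\Lambda$, the main technical obstacle is that $\Lambda = A_\fp\llbracket A_\fp^\times \rrbracket$ is \emph{non-Noetherian}, so one cannot directly invoke coherence of pushforwards. Instead, I would adapt the ad hoc workaround alluded to in the introduction (cf.\ \ref{lem:iwalg}): present the Igusa-tower cohomologies as limits over the finite layers $\mathfrak{I}_n$, each finite over a Noetherian base, show that the ordinary projector cuts out a finite type piece at each finite level with respect to a Hecke-stable, $\Lambda$-linear structure, and then check by a Nakayama-type argument on $\Lambda$ and the density of classical weights that the transition maps stabilise, so that $M$ and $N$ are of finite type.

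Finally, for the Serre duality interpolation, I would start from the classical pairing
\[
\operatorname{H}^0(X, \omega^k) \times \operatorname{H}^1\bigl(X, \omega^{1-k} \otimes \omega_D(-2D)\bigr) \longrightarrow A_\fp,
\]
which comes from an identification of the dualising sheaf $\Omega^1_X$ with $\omega \otimes \omega_D(-2D)$. The twist by $\omega_D$ precisely compensates for the lack of self-duality of rank-two Drinfeld modules (the point flagged in Section \ref{sec:dualityfamilies}). I would interpolate this pairing by reinterpreting both sides as Igusa-tower cohomology twisted by $\chi$ and $1-\chi$ respectively, and by showing that Serre duality on each finite layer descends compatibly to the limit. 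Perfectness at the Zariski-dense set of classical weights $k \geq 3$, combined with the finite type property established above, then forces the interpolated pairing on $M \times N$ to be perfect. The most delicate point, and where I expect the main obstacle to lie, is in combining the non-Noetherianity of $\Lambda$ with the need to transfer the perfectness of Serre duality from specializations to the universal level without losing Hecke compatibility.
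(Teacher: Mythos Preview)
Your overall architecture is right and matches the paper: Igusa tower over $\mathfrak{X}^{\ord}$, universal sheaf $\omega^{\kappa^{un}}$ (your $\omega^\chi$), ordinary projectors, control theorem, and then Serre duality interpolated via Kodaira--Spencer with the extra $\omega_D$ twist. But two genuine points are missing.

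\textbf{The operator $T_\fp$ does not act on the universal sheaf.} You define $M = e(T_\fp)\operatorname{H}^0(\mathfrak{X}^{\ord},\omega^\chi)$ and similarly for $N$, but recall that the integral $T_\fp$ is $\varpi^{-\min(1,k)}T_\fp^{\naive}$: the normalizing power of $\varpi$ depends on $k$, so $T_\fp$ has no meaning as an operator on $\omega^{\kappa^{un}}$. The paper's fix is to observe that over the ordinary locus the correspondence $X_0(\fp)^{\ord}$ splits as $\mathfrak{X}_0(\fp)^{\ord,F}\sqcup\mathfrak{X}_0(\fp)^{\ord,V}$, and the two pieces give \emph{separate} operators $F$ and $U_\fp$ that \emph{do} act on $\omega^{\kappa^{un}}$ (Section~\ref{subsec:FU}). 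One then takes $M=e(U_\fp)\operatorname{H}^0(\mathfrak{X}^{\ord},\omega^{\kappa^{un}})$ and $N=e(F)\operatorname{H}^1_c(\mathfrak{X}^{\ord},\omega^{\kappa^{un}}\otimes_{\Lambda,d}\Lambda(-2D))$, and only afterwards shows (Proposition~\ref{pro:FUT}, Corollary~\ref{cor:classic}) that at classical weights $e(U_\fp)$ and $e(F)$ agree with $e(T_\fp)$ because the other summand of $T_\fp$ is divisible by $\varpi$ in the relevant weight range. Without this splitting you cannot even state local finiteness uniformly in $\kappa$.

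\textbf{The duality argument is missing the adjointness of $U_\fp$ and $F$.} You say perfectness at dense classical weights forces perfectness of the $\Lambda$-pairing, and that is indeed how the paper concludes (Proposition~\ref{prop:densek}). But first you must know that the Serre pairing restricts to a pairing between $e(U_\fp)\operatorname{H}^0$ and $e(F)\operatorname{H}^1_c$, i.e.\ that $U_\fp$ and $F$ are adjoint up to a unit. This is not formal: the paper computes ${\bf D}(T_\fp^{\naive})=\pi_{1-k}^{-1}\otimes\operatorname{tr}_{p_2}\otimes\pi_1^D$ (Proposition~\ref{prop:DualTp}) and then uses the Atkin--Lehner involution $w$, which swaps the $F$- and $V$-components of $X_0(\fp)^{\ord}$ and satisfies $p_1w=p_2$, $p_2w=\langle\varpi\rangle p_1$, to deduce ${\bf D}(F)=\langle\varpi\rangle^{-1}U_\fp$ (Theorem~\ref{thm:dualityfamilies}). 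Your sketch does not mention this step, and without it there is no reason the ordinary projectors on the two sides are compatible with the pairing.
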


To obtain this result, we closely follow the ideas for the modular curve in \cite{BoPi2022}. First we define the Hecke operator $T_\mathfrak p$ as a cohomological correspondence and study its properties. In particular, we do a local analysis of $T_\mathfrak p$ at supersingular points and its  Serre-dual operator.  Secondly, we use the theory of Hodge--Tate--Taguchi (see \cite{Ha2020dual} and \cite{Tag1995}) to certain canonical subgroups of \emph{ordinary} Drinfeld modules to interpolate the modular line bundle. Indeed, modulo $\mathfrak p^n$ there is a canonical $A$-submodule $H^{\can}_n$ over $X_n^{\ord}$ of the universal Drinfeld module.%more simple now?
Here $X_n^{\ord}$ is the ordinary locus of $X_n=X\times \operatorname{Spec}(A/\mathfrak p^n)$. It has the property that its Cartier--Taguchi dual $(H_n^{\can})^D$ is an \'etale group \'etale locally isomorphic to $A/\mathfrak p^n$. By taking the $\mathfrak p$-adic limit, we can thus define the Igusa tower as the torsor of frame of $\lim (H_n^{\can})^D$: 
\[\operatorname{Ig}=\operatorname{Isom}_{\mathfrak X^{\ord}}(A_\fp,\lim (H_n^{\can})^D)\to \mathfrak X^{\ord},\]
where $\mathfrak X^{\ord}=\operatorname{colim}X_n^{\ord}$.
On the other hand, the Hodge--Tate--Taguchi map  induces, again $\mathfrak p$-adically, the following isomorphism
\[\omega\cong\lim (H_n^{\can})^D\otimes\mathcal O_{\mathfrak X^{\ord}}\cong(\mathcal{O}_{\operatorname{Ig}}\widehat\otimes A_\fp )^{A_\fp^\times},\]
where the $A_\mathfrak p^{\times}$-action is taken diagonally. This description allows us to define the universal family: let $\kappa^{un} \colon A_\mathfrak p^\times \to \Lambda^\times$ be the universal group homomorphism,  we define 
\[\omega^{\kappa^{un}} \coloneqq (\mathcal{O}_{\operatorname{Ig}}\widehat\otimes\Lambda )^{A_\fp^\times}, \]
where the action is taken diagonally, with the action on $\Lambda$ is given by $\kappa^{un}$. This is an invertible $\mathcal{O}_{\mathfrak X^{\ord}}\widehat \otimes \Lambda$-module.

Now that we have interpolated the modular forms bundles $\omega^k$, we are ready to do the next step: the construction of the modules $M$ and $N$. This is done in two stages that we will describe now. First, we define two auxiliary modules using the theory of compact supported coherent cohomology.  This theory has various point of view on how to define it. We are going to follow the approach using condensed mathematics \cite{ClSc2019}, since it give us access to the six operations formalism. This give a smoother approach and have been applied successfully  in \cite{BoPi2023}.  Our main object of study are the following $\Lambda$-modules
\begin{align*}
  \operatorname{H}^0(\mathfrak X^{\ord},\omega^{\kappa^{un}}), \quad \operatorname{H}^1_c(\mathfrak X^{\ord},\omega^{\kappa^{un}}). 
\end{align*}
The definition of $\operatorname{H}^1_c$ is where we use condensed mathematics. First write $\Lambda = \lim R_n$, where the $R_i's$ are finite rings. Denote by $I_n$ the kernel of $\Lambda \to R_n$. Now, we use the formalism of six operations. In particular there is $q_{n,!}$, where $q_n\colon X_n^{\ord}\to \operatorname{Spec}(A/\mathfrak p^n)$ is the structure map of the ordinary locus. Then $\operatorname{H}^1_c(X^{\ord}_n,\omega^{\kappa^{un}}\otimes (\Lambda/I_n))\coloneqq \operatorname{H}^1(\operatorname{Spec}(A/\mathfrak p^n), q_{n,!}\omega^{\kappa^{un}}\otimes (\Lambda/I_n))$. Remark that via \cite[Prop. 2.3.5]{BoPi2023} this definition is exactly the same used in \cite[\S4.2.2]{BoPi2022}, where the authors used a more classical approach not using condensed mathematics. Finally, we define
\[\operatorname{H}^1_c(\mathfrak X^{\ord},\omega^{\kappa^{un}})=\lim \operatorname{H}^1_c(X^{\ord}_n,\omega^{\kappa^{un}}\otimes (\Lambda/I_n)).\]

The final stage is to construct two operators $U_\mathfrak p$ and $F$ on these cohomology, such that they have projectors $e(U_\mathfrak p)$ and $e(F)$. The construction of $U_\mathfrak p$ and $F$ are analogous as the case of the modular curve. The fact that there exists the projector have to be suitable adapted because the ring $\Lambda$ is non-noetherian. Once we have these projectors, we can finally define the module $M$ and $N$ as $e(U_\mathfrak p)\operatorname{H}^0(\mathfrak X^{\ord},\omega^{\kappa^{un}})$ and $e(F)\operatorname{H}^1_c(\mathfrak X^{\ord},\omega^{\kappa^{un}}\otimes_{\Lambda,d}\Lambda(-2D))$, respectively, where $d\colon \Lambda \to \Lambda$ is the algebra homomorphism induced by the character $A_\mathfrak p^\times\to \Lambda^\times, t\mapsto t^2(\kappa^{un}(t))^{-1}$.

Having construed the module, Theorem will follows form some further properties of the object involved in the construction. Indeed, from the construction one can check a relation between $U_\mathfrak p$ and $F$ with $T_\mathfrak p$ that gives that
\begin{align*}
e(U_\mathfrak p)\operatorname{H}^0(\mathfrak X^{\ord},\omega^{\kappa^{un}})\otimes_{\Lambda,k}A_\fp&=e(T_\mathfrak p)\operatorname{H}^0(X,\omega^k),\\
    e(F)\operatorname{H}^1_c(\mathfrak X^{\ord},\omega^{\kappa^{un}}) \otimes_{\Lambda,k}A_\fp&=e(T_\mathfrak p)\operatorname{H}^1(X,\omega^k).    
\end{align*}
From this the first part of Theorem follows. Further analysis in the arguments also leads to the finite type property. Lastly, let us to mention how we work with the perfect pairing. As in the modular curve case, the basic input is the Kodaira--Spencer isomorphism relating $\Omega^1_{X/A_\mathfrak p}$ and $\omega$. We remark that in our case this relation is not the same as in the modular curve case, because there no auto-duality between Drinfeld modules. This is the reason of the appearance of the dual Drinfeld modular forms bundle $\omega_D$ in the statement of the theorem. Nevertheless, this variant of the Kodaira--Spencer isomorphism still allows us to calculate the Serre dual of $T_\mathfrak p$. Using these two facts and using the relation mentioned above between $T_\mathfrak p$ with $U_\mathfrak p$ and $F$, we can deduce the second part of the Theorem.

Along the way, we also give a proof of the function field version of the classical Serre--Tate Coordinates theorem that lift of ordinary elliptic curves $E_0/\mathbb{F}_p$ are parametrized by $\operatorname{Hom}_{\mathbb{Z}_p}(T_\mathfrak pE_0^D(\overline{\mathbb{F}_p})\otimes T_\mathfrak pE_0(\overline{\mathbb{F}_p}), \mathbb{G}_m[\mathfrak p^\infty])$, see Theorem \ref{thm:SerreTate}.
The result is known to experts, but we decided to give a proof as we could not find a proof in the setting of Drinfeld modules.

 We finish by mentioning a few words about our choice of level structure. The level structure is chosen to follow the theory developed in \cite{Ha2020dual,Ha2022cpt} and it is different from the one used in \cite{NiRo2021}. With this level we have a rather explicit description about the cusp of the Drinfeld modular curve that allow us to define various important notions that are used to define the cohomological correspondence, duality and Kodaira--Spencer isomorphism, as well as to apply the Hodge--Taguchi map, etc. 

\subsection*{Acknowledgments} We thanks G.~B\"ockle for encouraging us to write section \ref{s:serre-tate}, and C.~Pagano for discussion on Carlitz polynomials. We thank V. Pilloni for his useful correspondence.

\section{Drinfeld modular curves} \label{sec:pre}

\subsection{Preliminaries}
We recall again the notation introduced in the introduction. Let $A= \mathbb{F}_q[T]$. Throughout this text, we fix monic irreducible polynomial $\varpi\in \mathbb{F}_q[T]$. The ideal generated by $\varpi$ will be denote by $\mathfrak p$ and $d$ its degree which corresponds to the degree of the polynomial $\varpi$. Finally, we denote by  $A_\mathfrak p$ the $\mathfrak p$-completion of $A$, $K_{\fp}$ its fraction field and $k(\fp)$ its residue field. 

In this work, we consider Drinfeld modules $(E, \varphi)$ over $\bF_q$-schemes $S$ (see \cite{Ha2022cpt} for a precise definition), to simplify notation we usually write the pair $(E, \varphi)$ simply as $E$.  Recall that the $\fp$-torsion points of $(E,\varphi)$ are defined by 
\[E[\fp]= \bigcap_{a\in\fp}\ker (\varphi(a)).\]
We denote by $E^D$ the dual Drinfeld module of $E$ (see \cite[Theorem 2.19]{Ha2020dual}).

For each $M \in \mathbb F_q[T]$, let $[M](X)$ be the Carlitz polynomial with coefficients in $\mathbb F_q[T]$. Recall that it is defined by recursion and linearity in the following way: $[1](X) \coloneqq X$, $[T](X) \coloneqq X^q + T X$, and for $n \geq 2$,  $[T^n](X) \coloneqq [T]([T^{n-1}](X))$. We further define the Carlitz(--Hayes) module which is the Drinfeld module, denoted by $C$, whcih is defined through the morphism $\varphi_C\colon A \to \mathbb F_q\{\tau\}; \,\varphi_C(T)\coloneqq\tau+T,$ where $\mathbb F_q\{\tau\}$ is the non-commutative polynomial ring that satisfies $\tau a=a^q\tau$. Remark that $C$ is the unique (formal) Drinfeld module of rank $1$ and good reduction at $\fp$ (see \cite[Section 4.1]{NiRo2021}).

Carlitz module plays the role of the root of unity group in the classical elliptic curve theory. For example, it will use to define duality for Drinfeld modules, which is central to our work (See the proof of the main proposition in \S\ref{s:serre-tate} and remark \ref{rmk:tag-dual}).

\subsection{Drinfeld modular curves}\label{ss:Drinfeld modular curves}

%We fix a fine subgroup $K$, with $\fn_K$ coprime with $\fp$.
Denote 
\[\Gamma_1(\mathfrak n)=\left\{\gamma\in {\rm GL}_2(A) \colon \gamma\equiv\begin{pmatrix}* & * \\
0 & 1 
\end{pmatrix}\mod \mathfrak n  \right\}\] \[\Gamma_0(\mathfrak p)=\left\{\gamma\in {\rm SL}_2(A) \colon \gamma\equiv\begin{pmatrix}* & * \\
0 & * 
\end{pmatrix}\mod \mathfrak p  \right\}.\]
Let $\xi$ be a non-zero element of $A$ coprime with $\fp$ and $\mathfrak n$ be the ideal  generated by $\xi$. Suppose that there exists a prime factor $\eta$ of $\xi$ of degree prime to $q-1$. Then we can choose a subgroup $\Delta \subset (A/\mathfrak n)^\times$ such that with the natural inclusion $\mathbb F_q^\times \subset (A/\mathfrak n)^\times$ we have $\Delta \bigoplus\mathbb F_q^\times =(A/\mathfrak n)^\times$. 
% The $\Gamma_1^\Delta(\mathfrak n)
% $-structure on $E$ is defined as a pair $(\lambda,[\mu])$ of a $\Gamma_1(\mathfrak n)$-structure on $E$ and an element $[\mu]\in I_{(E,\lambda)/\Delta}(S)$, where $I_{(E,\lambda)/\Delta}$ is the finite \'etale  $(A/\mathfrak n)^\times$-torsor representation the functor $\underline{\rm Isom}_{A,S}(\underline{A/\mathfrak n}, E[\mathfrak n]/{\rm Im}(\lambda))$ \cite[Section 3]{Ha2022cpt}.
We consider the compactified Drinfeld modular curve with level $\Gamma^\Delta_1(\mathfrak n)$ over $\mathrm{Spec}(A_{\fp})$, denoted by $q:X\rightarrow \mathrm{Spec}(A_{\fp})$ (see \cite[Section 3]{Ha2022cpt} for details).  We also consider the Drinfeld modular curve with level at $\fp$, i.e. the compactified Drinfeld modular curve  with level $\Gamma_0(\fp)\cap \Gamma^\Delta_1(\mathfrak n)$, denoted by $q: X_0(\fp)\rightarrow \mathrm{Spec}(A_{\fp})$. In a dense open subset of $X$, this is the moduli space that parametrizes rank two Drinfeld modules $E$ over an $A_\mathfrak p$-schemes with $\Gamma^\Delta_1(\mathfrak n)$-structure. Similarly, in a dense open subset of $X_0(\fp)$, it is the moduli space that parametrizes rank two Drinfeld modules $E$ with $\Gamma^\Delta_1(\mathfrak n)$-structure and $\Gamma_0(\mathfrak p)$-structure. As it is key in our work, we recall that a $\Gamma_0(\mathfrak p)$-structure on an Drinfeld module $E$ over an $A_\mathfrak p$-scheme $S$ corresponds to a finite locally free closed $A$-submodule scheme $H$ of $E[\fp]$ of rank $q^d$ over $S$. %OR finite locally free closed $A_\mathfrak p$-submodule scheme G of of rank $q^d$ over $S???$.
We denote these open subsets by $Y$ and $Y_0(\mathfrak{p})$, respectively, and their boundary divisor by $D$ and $D_0(\mathfrak{p})$, respectively.

These compactifications come with two finite flat morphisms $p_1$ and $p_2$
\[\begin{tikzcd}[column sep=small]
X_0(\fp) \arrow[rr, shift left, "p_1"]
\arrow[rr, shift right, "p_2"']\arrow[rd,"q"'] &  & X\arrow[dl,"q"].\\
 & \operatorname{Spec}(A_{\fp})& 
\end{tikzcd}\]
In a dense open subset these come with an isogeny
\[\pi \colon p_1^*\mathcal{E} \to p_2^*\mathcal{E}, \]
where $\mathcal{E}$ is  the universal rank two Drinfeld module of level $\Gamma^\Delta_1(\mathfrak n)$. Moreover, it parametrizes the isogenies $E \to E/H$, where $H$ is as above and the $p_1$ and $p_2$ can be described as $(E,H)\mapsto E$ and $(E,H)\mapsto E/H$, respectively.

\subsection{On the special fiber}\label{subsec:specialfib}
% Let $S$ be any scheme over $\bF_q$ and $(E, \varphi)$ be a Drinfeld module over $S$ with characteristic $\theta$.
Throughout this section we let $(E,\varphi)$ be a Drinfeld module over an $A_\mathfrak p$-scheme $S$ of rank two defined over $\operatorname{Spec}(k(\fp))$. % i.e., in the fiber over $\operatorname{Spec}(k(\fp))$  of $q$.
In this situation, we have that the base change of $E[\mathfrak p]$ to any  geometric point $x$ of $\mathfrak p$  is either 
\[(\fp^{-1}/A) \text{ or } \{0\}.\]
In the first case, we say that the Drinfeld module $E$ is ($\fp$-)\emph{ordinary} and, in the second, ($\fp$-)\emph{supersingular}.  We denote by $X_1$ the reduction modulo $\varpi$ of $X$ and by $X_1^{\ord}$ the ordinary locus of $X_1$, which is a dense open subset (see \cite{HaCh-Fu2020}). Moreover, the ordinary locus of $X_0(\fp)_1$ is defined as the fiber product of  $X_0(\fp)_1$ and $X_1^{\ord}$ through $p_1$, and is denoted by $X_0(\fp)_1^{\ord}$.
More generally, we shall write $X_n, X_n^{\ord}, X_0(\fp)_n^{\ord},\ldots$ the reductions modulo $\varpi^n$.

In order to have a better understanding of these loci, we explore these conditions in terms of the Drinfeld module structure of $E$. Indeed, we can write \cite[Proposition 2.7]{Shas2007}
\begin{equation}\label{eq:accionp}
        \varphi(\varpi)=(a_0+\dots+a_{2d}\tau^{d})\tau^d.
\end{equation}

\begin{defi}Let $E$ be a Drinfeld module over an $A_{\mathfrak p}$-scheme $S$. We let $E^{(\fp)}=E\times_{S,\operatorname{Frob}^d}S
$, where $d$ is the degree of $\mathfrak p$. We define the Frobenius isogeny $F_{\fp,E}$ to be the morphism
\[\tau^d : E \to E^{(\mathfrak p)}.\]
We also define the Verschiebung $V_{\fp,E}$ by 
\[a_0+\dots+a_{2d}\tau^{d}\colon  E^{(\mathfrak p)}\to E.\]
\end{defi}
When the context is clear, we simply write \( F_{\fp,E} \) and \( V_{\fp,E} \) as \( F \) and \( V \), respectively. 
 \begin{rmk}  We have the exact sequence \cite[2.8]{Shas2007}
\begin{equation} \label{eq:torison}
    0\to \ker F \to E[\fp] \to \ker V\to 0.
\end{equation}
 \end{rmk}

%Being an finite group group scheme, we have the following devissage
%\[0\to E[\fp]^c\to E[\fp] \to E[\fp]_{\'et}\to 0.\]

%We can give the following description using the Frobenious morphism and the Verschibung map. 

When $S=\operatorname{Spec}(R)$ and $E$ is a point in $X_0(\fp)_1$, then the associated locally free $A$-module of $E[\fp]$ of rank $q^d$ is either isomorphic to $(\fp^{-1}/A)_R$  or $\alpha_{d,R}=\operatorname{Spec}(R[T]/T^{q^{d}})$. Thus, in the ordinary locus $X_0(\fp)_1^{\ord}$ there are two open subsets $X_0(\fp)^{\ord, V}_1$ and $X_0(\fp)^{\ord, F}_1$ corresponding to the two cases above, respectively. Thus, $X_0(\fp)^V_1$ parametrizes the Drinfeld modules $E$ with the $A$-submodule $H$ of order $q^d$ isomorphic to $\ker V$, and  $X_0(\fp)^F_1$ parametrizes the ones isomorphic to $\ker F$. 
Remark that $X_0(\fp)^{\ord, V}_1$ and $X_0(\fp)^{\ord, F}_1$ are both isomorphic to $X^{\ord}_1$ and their union is dense in $X_0(\fp)_1$. We obtain a disjoint union:
\[X_0(\fp)_1^{\ord}=X_0(\fp)^{\ord, F}_1\sqcup X_0(\fp)^{\ord, V}_1.\]
This induces a decomposition into two %irreducible 
components:
\[X_0(\fp)_1=X_0(\fp)^F_1\cup X_0(\fp)^V_1.\]
Furthermore, for $n\geq 1$ we decompose $X_0(\fp)_n^{\ord}= X_0(\fp)_n^{\ord, F} \sqcup X_0(\fp)_n^{\ord, V}$. Let  $\mathfrak X_0(\mathfrak p)$ be the completion along the closed subscheme $X_0(\fp)_1$ and $ \fX $ be the corresponding formal scheme attached to $X$. We denote by $\fX_0(\fp)^{\mathrm{ord}}$ the $\operatorname{colim_n} X_0(\mathfrak p)_n^{\rm ord}$, and remark that we have the following decomposition
\[\mathfrak X_0(\mathfrak p)^{\ord}=\mathfrak X_0(\mathfrak p)^{\ord,F} \coprod \mathfrak X_0(\mathfrak p)^{\ord,V}.\]

This allows us to describe the morphisms $p_1$ and $p_2$, on these irreducible components  in the following way:

\begin{prop}\label{prop:corr-special}
    In a dense open subset of $X_0(\fp)^F_1$, $p_1$ is an isomorphism, $p_2$ is totally ramified  and $\pi$ identify with the Frobenius isogeny of the universal Drinfeld modules
    \[F\colon\mathcal{E}\to \mathcal{E}^{(\mathfrak p)}.\]
    Similarly, in a dense open subset of $X_0(\fp)^V_1$, $p_1$ is a totally ramified, $p_2$ is an isomorphism and $\pi$ is identified with the Verschiebung isogeny of the universal Drinfeld module
    \[ V\colon \mathcal{E}^{(\mathfrak p)}\to  \mathcal{E}.\]
\end{prop}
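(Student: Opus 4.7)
My plan is to treat the $F$- and $V$-components separately; the $F$-component is essentially formal, while the $V$-component reduces to a rigidity statement for Verschiebung on the ordinary locus.

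On the $F$-component, the universal subgroup $\mathcal H$ is connected of rank $q^d$, and so must coincide with the unique such closed $A$-submodule scheme $\ker F_{\mathcal E}\subset\mathcal E[\mathfrak p]$. Because $\ker F_E$ depends functorially on $E$, the forgetful morphism $p_1$ is an isomorphism, with inverse $E\mapsto(E,\ker F_E)$. By construction the universal isogeny is then $F\colon\mathcal E\to\mathcal E^{(\mathfrak p)}$; consequently $p_2$ is the map $E\mapsto E^{(\mathfrak p)}$, i.e.\ the absolute $q^d$-Frobenius of $X_1^{\ord}$, and hence a totally (inseparably) ramified morphism of degree $q^d$.

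For the $V$-component I would construct an explicit candidate inverse to $p_2$:
\[\alpha\colon X_1^{\ord}\longrightarrow X_0(\mathfrak p)^{\ord,V}_1,\qquad E\longmapsto (E^{(\mathfrak p)},\ker V_E),\]
which is well-defined because $\ker V_E$ is étale of rank $q^d$ on the ordinary locus by \eqref{eq:torison}. A direct computation then shows that $p_2\circ\alpha=\operatorname{id}_{X_1^{\ord}}$, that $p_1\circ\alpha(E)=E^{(\mathfrak p)}$---hence identifying $p_1$ with the absolute $q^d$-Frobenius on $X_1^{\ord}$---and that under $\alpha$ the universal isogeny $\pi$ becomes $V\colon\mathcal E^{(\mathfrak p)}\to\mathcal E$. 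It remains only to verify that $\alpha$ is an isomorphism onto a dense open of the $V$-component.

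This last step is the main obstacle, and reduces to the following rigidity assertion: every étale isogeny of degree $q^d$ to an ordinary Drinfeld module coincides, up to unique isomorphism, with its Verschiebung. I would prove this étale-locally by a $\mathfrak p$-divisible group analysis: the étale Tate module $T_{\mathfrak p}(E'/H')$ is free of rank $1$ over $A_{\mathfrak p}$, and $A_{\mathfrak p}$ admits a unique $A$-sublattice of index $q^d$, namely $\mathfrak p A_{\mathfrak p}$, which corresponds precisely to $V_{E'/H'}$. Promoting this pointwise uniqueness to a canonical identification of the two moduli problems---in particular, checking it across non-reduced bases, where it is exactly this rigidity that produces the ramification degree $q^d$ of $p_1$---is where the technical work lies; the rest of the proposition is then a direct unwinding of definitions.
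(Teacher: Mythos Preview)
The paper does not give a proof of this proposition; it is stated as a structural fact about the special fibre, parallel to the classical description of $X_0(p)_{\mathbb F_p}$ for elliptic modular curves. So there is no argument in the paper to compare against, and your task is really to supply one.

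Your treatment of the $F$-component is correct and complete: the connected subgroup of rank $q^d$ is forced to be $\ker F_{\mathcal E}$, so $p_1$ has the obvious inverse, $\pi$ is the Frobenius isogeny, and $p_2$ becomes the $q^d$-power Frobenius on the base.

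For the $V$-component your strategy is right, and the section $\alpha$ is exactly the map one wants. Two remarks on the rigidity step. First, the Tate-module argument can be replaced by something that works uniformly over any $k(\mathfrak p)$-scheme and avoids the ``promotion to families'' issue you flag: given $(E',H')$ with $H'$ étale of rank $q^d$, set $E=E'/H'$ and consider the dual isogeny $\pi^\vee\colon E\to E'$. Since $\pi^\vee\circ\pi=[\varpi]_{E'}$ and $\varphi(\varpi)$ begins with $\tau^d$ (equation~\eqref{eq:accionp}), the inseparable degree of $[\varpi]$ is exactly $q^d$; as $\pi$ is separable of degree $q^d$, $\pi^\vee$ is purely inseparable of degree $q^d$, hence factors as an isomorphism $E^{(\mathfrak p)}\xrightarrow{\sim}E'$ after $F_E$. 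Under this isomorphism $\pi$ becomes $V_E$. This is a scheme-theoretic statement and gives $\alpha\circ p_2=\mathrm{id}$ directly. Second, there is an even shorter route once the $F$-component is done: $p_2$ is finite flat of degree $q^d+1$, and you have just shown it has degree $q^d$ on $X_0(\mathfrak p)^F_1$; hence it has degree $1$ on $X_0(\mathfrak p)^V_1$, so $p_2$ is birational there and your section $\alpha$ is its inverse on the ordinary locus. Either of these finishes the argument cleanly.
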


%{\color{blue}(Comments about the section: we use this specific level structure for the convenience that we have result that are ready to use. The price we paid is that we don't have a moduli description for the compactification $X$. Thus, maybe more argument are for the definition for the $p_i's$. 

%We also note that most of this section maybe done for a general $A$. But for the definition of the Verschiebung  the reference used assumes $A=\mathbb{F}_q[T]$).}

%\subsection{Some notations}

\section{On Serre--Tate Coordinates}\label{s:serre-tate}
 In this section we study the projection $p_1: X_0(\fp)\rightarrow X$ to study the integrability of the Hecke correspondence $T_\mathfrak p$ which is crucial is this work (see \S\ref{sec:cohcorr}). We focus on the ordinary locus  of the Drinfeld modular curve which is enough for our purposes.
 
 We denote by $\breve{K}_{\fp}$ the completion of the maximal unramified extension of $K_{\fp}$, by $\breve A_{\fp}$ the ring of integers of $\breve{K}_{\fp}$ and $\breve k$ the residue field of $\breve A_{\fp}$. Finally, let $k$ be an algebraic closure of $k(\mathfrak p)$.
\begin{prop}\label{prop:ordlocal}Let $x\in X_1^{\ord}(k)$ be a geometric point in the ordinary locus.
    \begin{enumerate}
        \item There exist isomorphisms $\widehat X_{(x)}\cong \operatorname{Spec}(\breve A_{\fp}\llbracket X\rrbracket)$ and for all $y\in X_0(\mathfrak p)(k)$ such that $p_1\circ y= x$ we have $\widehat X_0(\mathfrak p)_{(y)}\cong \operatorname{Spec}(\breve A_{\fp}\llbracket X\rrbracket)$. Here $\widehat X_{(x)}$ and $\widehat X_0(\mathfrak p)_{(y)}$ are the completion of the strict henselizations. 
        \item  Let $y\in X_0(\mathfrak p)(k)$ such that $p_1\circ y= x$. We can choose the isomorphisms above such that the following diagram is commutative
        \[
        \begin{tikzcd}
            \widehat X_0(\mathfrak p)_{(y)}\ar[d, "p_1"']\ar[r, "\sim"]&\operatorname{Spec}(\breve A_{\fp}\llbracket X\rrbracket) \ar[d, "f_y"]\\
            \widehat X_{(x)}\ar[r, "\sim"]&\operatorname{Spec}(\breve A_{\fp}\llbracket X\rrbracket),
        \end{tikzcd}\]
        where $f^*_y\colon \breve A_{\fp}\llbracket X\rrbracket \to \breve A_{\fp} \llbracket X \rrbracket$ is either the identity when $y\in X_0(\mathfrak p)^{\ord, F}_1$ or equal to the map sending $X$ to $[\varpi](X)$, which is the Carlitz polynomial, if $y\in X_0(\mathfrak p)^{\ord, V}_1$.    \end{enumerate}
\end{prop}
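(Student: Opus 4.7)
The plan is to use the function-field Serre--Tate coordinates from Theorem \ref{thm:SerreTate}. For an ordinary Drinfeld module $E_0/k$, the universal deformation functor on complete local Noetherian $\breve{A}_\fp$-algebras with residue field $k$ is prorepresented by the formal Carlitz module $\widehat{C}$; concretely, the Serre--Tate parameter $q(E)\in\widehat{C}$ identifies $\widehat{X}_{(x)}$ with $\operatorname{Spec}(\breve A_{\fp}\llbracket X\rrbracket)$, where $X$ is the Carlitz coordinate. The $\Gamma^\Delta_1(\mathfrak n)$-level structure, being étale away from $\fp$, lifts uniquely to any deformation and plays no role in the local analysis.

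For part (1) concerning $\widehat{X_0(\mathfrak p)}_{(y)}$, the point $y$ corresponds to a pair $(E_0,H_0)$ with $H_0\subset E_0[\fp]$ either the connected subgroup $\ker F$ in the F-case, or an étale subgroup of order $q^d$ in the V-case. In either case $H_0$ admits a unique lift to any deformation of $E_0$: connected finite flat closed $A$-subgroup schemes of the deforming formal $A$-module have unique deformations via the connected--étale sequence, while étale subgroups deform uniquely by topological invariance of the étale site. Hence the forgetful functor from deformations of $(E_0,H_0)$ to deformations of $E_0$ is an isomorphism, and $\widehat{X_0(\mathfrak p)}_{(y)} \cong \operatorname{Spec}(\breve A_{\fp}\llbracket X\rrbracket)$.

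For part (2), the F-case is immediate from the above: since the connected subgroup is canonical, $p_1$ is an isomorphism of functors and $f_y$ can be taken to be the identity. For the V-case, we use the coordinate on $\widehat{X_0(\mathfrak p)}_{(y)}$ transported via $p_2$ from the target (which is an isomorphism there by Proposition \ref{prop:corr-special}); the claim then reduces to the identity
\begin{equation*}
q(E) = [\varpi]\bigl(q(E/H)\bigr)
\end{equation*}
for $H$ the étale subgroup. On the special fiber the isogeny $E\to E/H$ reduces to the Verschiebung $V$ by Proposition \ref{prop:corr-special}, and under the Serre--Tate pairing $T_\fp E^D\otimes T_\fp E \to \widehat{C}$ factoring $V$ out corresponds to multiplication by $\varpi$ on the Tate side (via $V\circ F=\varphi(\varpi)$, together with the fact that $F$ is an isomorphism on the relevant Tate module in the ordinary setting). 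By bilinearity of the pairing, this multiplication by $\varpi$ on the Tate module side translates into the Carlitz endomorphism $[\varpi]$ on the Carlitz parameter, yielding the desired identity.

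The main obstacle is the V-case formula $q(E)=[\varpi](q(E/H))$. Establishing it rigorously requires a precise functoriality of the Serre--Tate pairing with respect to isogenies in both Tate factors, together with the identification of the $V$-isogeny with Carlitz $\varpi$-multiplication at the level of formal parameters; this is the Drinfeld-module analogue of Katz's classical computation of the Serre--Tate parameter of a $p$-isogenous elliptic curve, and should follow from a direct manipulation of the pairing constructed in Theorem \ref{thm:SerreTate}.
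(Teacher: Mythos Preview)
Your outline is correct and follows essentially the same route as the paper: both arguments identify the completed local rings via the Serre--Tate coordinates of Theorem~\ref{thm:SerreTate}, handle the $F$-case by noting $p_1$ is an isomorphism there, and reduce the $V$-case to the functoriality of the Serre--Tate pairing under the isogeny $\pi$, where quotienting by the \'etale $H$ corresponds to multiplication by $\varpi$ on the Tate-module factor and hence to $[\varpi]$ on the Carlitz parameter. Your treatment of part~(1) for $X_0(\mathfrak p)$ via unique lifting of $H_0$ is actually more explicit than the paper's, and the ``main obstacle'' you flag is exactly what the paper dispatches (tersely) in the sentence following Theorem~\ref{thm:SerreTate}.
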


To obtain such a theorem we start reviewing the Serre--Tate theory for Drinfeld modules.  Let us fix $(E_0,\varphi_0)$ a Drinfeld module of rank two 
defined over $\operatorname{Spec}(k)$. Following \cite{BoBr2019} and \cite{D1974} we recall that we can associate to it a $A_{\fp}$-divisible module \[E_0[\mathfrak p^\infty]=(E[\mathfrak p^n],\iota_n)_n\]
where  $\iota_n$ is the inclusion $E[\mathfrak p^{n}]\hookrightarrow E[\mathfrak p^{n+1}]$, for every positive integer $n$. Furthermore, this $A_{\fp}$-divisible module $E_0[\mathfrak p^\infty]$ has a deformation ring, that we denote by $R_{E_0[\mathfrak p^\infty]}$, that moreover is a power series ring over $\breve A_{\fp}$ in one indeterminate (see \cite{BoBr2019,D1974}). In \cite{D1974} it is proved the following theorem (see \cite[Thm. 2.8]{BoBr2019}):
\begin{thm}(Serre-Tate)\label{SerreTateLocalRing} Let $X_{\breve A_{\fp}}$ be the base change to $\breve A_{\fp}$ of $X$, and $x$ the point in $X_{\breve A_{\fp}}$ corresponding to $(E_0,\varphi_0)$. There is a natural isomorphism of $\breve A_{\fp}$-algebras
\[R_{E_0[\mathfrak p^\infty]}\to \widehat{\mathcal{O}}_{X_{\breve A_{\fp}},x},\]
    where $\widehat{\mathcal{O}}_{X_{\breve A_{\fp}},x}$ is the completion of the local ring $\mathcal{O}_{X_{\breve A_{\fp}},x}$. 
\end{thm}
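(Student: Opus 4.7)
First I would identify both rings as pro-representing objects of formal deformation functors on the category of Artinian local $\breve A_\fp$-algebras $R$ with residue field $k$. Let $\operatorname{Def}_{E_0}(R)$ denote the set of isomorphism classes of Drinfeld module lifts of $E_0$ to $R$, and $\operatorname{Def}_{E_0[\mathfrak p^\infty]}(R)$ the set of deformations of the associated $A_\fp$-divisible module. Since $\mathfrak n$ is prime to $\mathfrak p$, the $\Gamma_1^\Delta(\mathfrak n)$-level structure on $E_0$ lifts uniquely to any infinitesimal thickening, so the moduli interpretation of $X$ identifies $\widehat{\mathcal O}_{X_{\breve A_\fp},x}$ with the pro-representing object of $\operatorname{Def}_{E_0}$, while by construction $R_{E_0[\mathfrak p^\infty]}$ pro-represents $\operatorname{Def}_{E_0[\mathfrak p^\infty]}$. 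The functorial assignment $E\mapsto E[\mathfrak p^\infty]$ produces a natural transformation $\operatorname{Def}_{E_0}\to\operatorname{Def}_{E_0[\mathfrak p^\infty]}$, and Yoneda yields the $\breve A_\fp$-algebra homomorphism of the statement. The theorem is the claim that this natural transformation is an equivalence of functors.

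\textbf{Reduction to tangent spaces.} Both rings are formally smooth of relative dimension one over $\breve A_\fp$: for the source this is the smoothness of the Drinfeld modular curve at the ordinary point $x$, and for the target this is recalled just above the statement. It therefore suffices to show that the induced map on relative tangent spaces over $\breve A_\fp$ is nonzero. The tangent space of $\operatorname{Def}_{E_0}$ is one-dimensional by the smoothness of $X$ at $x$. For $\operatorname{Def}_{E_0[\mathfrak p^\infty]}$, the ordinary hypothesis gives a canonical splitting
\[E_0[\mathfrak p^\infty]\cong \widehat{E}_0\times E_0^{\mathrm{\acute{e}t}}[\mathfrak p^\infty]\]
over $k$ into a formal factor of $A_\fp$-height one and an étale factor of $A_\fp$-height one. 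Both factors admit unique lifts over any $R$, so a deformation is classified by an extension class, and the tangent space is the one-dimensional $k$-vector space $\operatorname{Ext}^1(E_0^{\mathrm{\acute{e}t}}[\mathfrak p^\infty],\widehat{E}_0)$, computable as $\operatorname{Hom}_{A_\fp}(T_\fp E_0^{\mathrm{\acute{e}t}},\widehat{E}_0(k[\epsilon])[\epsilon])$ and naturally free of rank one via the Lubin--Tate structure of $\widehat{E}_0$.

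\textbf{Nontriviality of the tangent map and main obstacle.} The essential content is then the statement that a nontrivial first-order deformation $E$ of $E_0$ induces a non-split extension of the étale part of $E_0[\mathfrak p^\infty]$ by its formal part; equivalently, if the connected--étale sequence of $E[\mathfrak p^\infty]$ over $k[\epsilon]$ splits, then $E$ is the trivial deformation of $E_0$. I would prove this rigidity using the canonical subgroup and Hodge--Tate--Taguchi machinery already invoked in the introduction: a splitting of the connected--étale sequence of $E[\mathfrak p^\infty]$ produces an $A_\fp$-submodule scheme of $E$ complementary to $\widehat{E}$, and together with $\widehat{E}$ this yields a product decomposition of the Drinfeld module $E$ into rank-one formal and étale pieces, which forces $E\cong E_0\otimes_k k[\epsilon]$. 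This rigidity step is where I expect the main difficulty to lie. A more robust alternative, closer to Drinfeld's original argument, would be to prove full faithfulness of the natural transformation on Hom sets, $\operatorname{Hom}_R(E,E')\to\operatorname{Hom}_R(E[\mathfrak p^\infty],E'[\mathfrak p^\infty])$, by dévissage along square-zero extensions $R\to R/I$ together with an analogue of Grothendieck--Messing rigidity for formal $A_\fp$-modules, which can be set up using Cartier--Taguchi duality. Once the tangent map is shown to be nonzero, the formal smoothness of both rings upgrades the algebra map to an isomorphism, concluding the proof.
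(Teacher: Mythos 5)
The paper does not actually prove this theorem: it is quoted from Drinfeld \cite{D1974} (see also \cite[Thm.~2.8]{BoBr2019}), so there is no in-paper argument to compare against. Your overall skeleton --- pro-represent both deformation functors, observe that both rings are formally smooth of relative dimension one over $\breve A_{\fp}$, and reduce to showing the map on one-dimensional relative tangent spaces is nonzero --- is a legitimate way to organize a proof, and your computation of the tangent space of $\operatorname{Def}_{E_0[\mathfrak p^\infty]}$ in the ordinary case (unique lifts of the height-one formal and \'etale factors, deformations classified by a one-dimensional $\operatorname{Ext}^1$) is consistent with the $\operatorname{Ext}$--$\operatorname{Hom}$ computation the paper itself carries out just after Theorem \ref{thm:SerreTate}.

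There are, however, two genuine gaps. First, the step that carries all the content --- injectivity of the tangent map, i.e.\ that a first-order deformation of $E_0$ whose $\fp$-divisible module is the split (trivial) deformation must itself be trivial --- is not proved. Your sketch via a ``product decomposition of the Drinfeld module $E$ into rank-one formal and \'etale pieces'' cannot be taken literally: the underlying group scheme of a rank-two Drinfeld module is $\mathbb{G}_a$ and admits no such decomposition; only the $\fp$-divisible module splits, and deducing from that splitting that the deformation of the Drinfeld module is trivial is exactly the assertion of the theorem, so the argument is circular as written. The alternative you name (full faithfulness by d\'evissage along square-zero extensions plus a Grothendieck--Messing-type rigidity for formal $A_{\fp}$-modules) is indeed the substance of Drinfeld's original proof, but it is only named, not carried out, and it is where the entire difficulty lives. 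Second, the theorem is stated for an arbitrary rank-two Drinfeld module $(E_0,\varphi_0)$ over $k$, while your tangent-space computation uses the connected--\'etale splitting and therefore only applies when $E_0$ is ordinary; in the supersingular case $E_0[\mathfrak p^\infty]$ is connected of height two and the target tangent space must instead be identified with the (still one-dimensional) Lubin--Tate deformation space of a height-two formal $A_{\fp}$-module. This restriction is harmless for the paper's application (Proposition \ref{prop:ordlocal} concerns ordinary points), but it does not establish the theorem as stated.
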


From this theorem and the remark above we deduce (1) of Proposition \ref{prop:ordlocal}. In the next step we obtain Serre--Tate coordinates from this theorem and following the exposition by Katz in \cite{Ka1981}. In fact, we adapt the classical theory of abelian varieties developed in \cite[\S2]{Ka1981} to the Drinfeld module of rank two case.

Using \cite[Section 4]{Tag1995}, there is a pairing
\[\theta_{\mathfrak p^n}\colon E_0[\mathfrak p^n]\times E_0^D[\mathfrak p^n] \to C ,\]
where is  $C$ is the Carlitz module. Since $E_0$ is ordinary, we have an isomorphism \cite[Lemma 3.6]{Ha2020dual}
\[E^c_0[\mathfrak p^n]\cong \operatorname{Hom}(E_0^D[\mathfrak p^n](k), C[\mathfrak p^n]),\]
where $E^c_0$ is the connected part of $E_0$.
Taking the limit we get a pairing
\[\Theta_0\colon E_0^c[\mathfrak p^\infty]\times T_\mathfrak pE_0^D(k) \to C[\mathfrak p^\infty], \]
where $T_{\mathfrak p}E_0^D(k)=\operatorname{Hom}_{A_{\mathfrak p}} (K_{\mathfrak p}/A_{\mathfrak p}, E_0^D(k))$ and $E_0^D(k)$ has the $A_\mathfrak p$-modules structure coming from $\varphi_0$. In other words, $\Theta_{0}$ induces an isomorphism
\[E^c_0[\mathfrak p^\infty]\cong \operatorname{Hom}_{A_\mathfrak p}(T_\mathfrak pE_0^D(k), C[\mathfrak p^\infty]).\]

We will use these constructions in the proof of the following theorem. First,
let ${\rm Art}_{\breve A_\mathfrak p}$ be the category of artinian local $\breve A_{\mathfrak p}$-algebras with residue field $\breve k$.

\begin{thm}(Serre--Tate Coordinates) \label{thm:SerreTate}The functor ${\rm Art}_{\breve A_\mathfrak p}\to {\rm Sets}$ of deformations of $(E_0,\varphi_0)$ is equivalent to the functor $R\mapsto \operatorname{Hom}_{A_\mathfrak p}(T_\mathfrak p E_0(k)\otimes T_\mathfrak p E_0^D(k), C[\mathfrak p^\infty](R))$.
\end{thm}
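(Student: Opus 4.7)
The strategy is to mimic Katz's proof of the classical Serre--Tate coordinates for ordinary abelian varieties \cite{Ka1981}, replacing the self-duality of abelian varieties with the Taguchi pairing $\Theta_0$ recalled just above. By Theorem \ref{SerreTateLocalRing} the deformation functor of $(E_0,\varphi_0)$ is naturally equivalent to the deformation functor of the $A_\fp$-divisible $A_\fp$-module $E_0[\fp^\infty]$, so I would transpose the problem to the latter and invoke the connected--\'etale sequence
\[0 \to E_0^c[\fp^\infty] \to E_0[\fp^\infty] \to E_0^{\mathrm{et}}[\fp^\infty] \to 0,\]
which splits over the perfect base $k$. Any deformation $\mathcal{E}[\fp^\infty]$ over $R \in {\rm Art}_{\breve A_\fp}$ sits in an analogous short exact sequence, and the two outer pieces lift uniquely: the \'etale quotient by Hensel (as the constant ind-object attached to $T_\fp E_0(k)$), and the connected part because it is a formal $A_\fp$-module of Drinfeld height one, identified with a twist of the Carlitz formal module, whose deformations are rigid (the Drinfeld analogue of Lubin--Tate).

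Given these rigidity statements, write $\mathcal{E}^c[\fp^\infty]$ and $\mathcal{E}^{\mathrm{et}}[\fp^\infty]$ for the canonical lifts of the two pieces. A deformation of $E_0[\fp^\infty]$ over $R$ then amounts to the datum of its extension class in
\[\operatorname{Ext}^1_{A_\fp}\bigl(\mathcal{E}^{\mathrm{et}}[\fp^\infty],\, \mathcal{E}^c[\fp^\infty]\bigr),\]
the Ext being computed in the category of $A_\fp$-divisible modules over $R$. After choosing a basis of $T_\fp E_0(k)$, the \'etale part is the constant ind-object $K_\fp/A_\fp$, and a standard free resolution identifies this Ext group with $\operatorname{Hom}_{A_\fp}(T_\fp E_0(k),\, \mathcal{E}^c[\fp^\infty](R))$.

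Finally, the isomorphism $E_0^c[\fp^\infty]\cong \operatorname{Hom}_{A_\fp}(T_\fp E_0^D(k),\, C[\fp^\infty])$ supplied by $\Theta_0$ deforms canonically to $R$, since the Carlitz module has good rank-one reduction and is the unique lift of its special fibre. Substituting yields $\mathcal{E}^c[\fp^\infty](R)\cong \operatorname{Hom}_{A_\fp}(T_\fp E_0^D(k),\, C[\fp^\infty](R))$, and composing with the Ext identification gives
\[\operatorname{Hom}_{A_\fp}\bigl(T_\fp E_0(k),\operatorname{Hom}_{A_\fp}(T_\fp E_0^D(k),C[\fp^\infty](R))\bigr)\;=\;\operatorname{Hom}_{A_\fp}\bigl(T_\fp E_0(k)\otimes T_\fp E_0^D(k),\, C[\fp^\infty](R)\bigr),\]
as asserted. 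The hard part, and where the Drinfeld theory genuinely departs from the abelian-variety picture, is the joint verification of the unique deformation of the connected part as a formal $A_\fp$-module \emph{together with} the compatible deformation of the Taguchi pairing; this step replaces the classical appeal to principal polarizations and leans on the duality theory of \cite{Tag1995, Ha2020dual}.
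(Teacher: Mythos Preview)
Your proposal is correct and takes essentially the same approach as the paper, following Katz \cite{Ka1981}: reduce via Theorem~\ref{SerreTateLocalRing} to deformations of the $A_\fp$-divisible module, parametrize these by extension classes of the \'etale quotient by the connected part, identify the resulting $\operatorname{Ext}^1$ with a $\operatorname{Hom}$ group via the free resolution $0\to A_\fp\to K_\fp\to K_\fp/A_\fp\to 0$, and then apply the Taguchi pairing $\Theta$. The paper supplements this outline with the explicit Katz--Messing construction of the map $\phi_{E,R}$ (lift a $\fp^n$-torsion point of $E_0(k)$ arbitrarily to $E(R)$ and multiply by $\varpi^n$, checking independence of the lift); conversely, you make explicit the rigidity of the height-one connected formal $A_\fp$-module and the compatible deformation of $\Theta$, a point the paper leaves implicit when it passes from $\Theta_0$ to $\Theta$ over $R$.
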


In the following, we give the construction of the equivalence in this theorem. Firstly remark that from theorem \ref{SerreTateLocalRing}, it is enough to consider deformations of the  attached $A_{\fp}$-divisible modules. For this let $(R,\mathfrak m)$ be an artinian local algebra over $\breve A_{\fp}$ with residue field $\breve k$. Let $(E,\varphi)$ be a Drinfeld module over $\operatorname{Spec}(R)$ such that modulo $\mathfrak m$ it is isomorphic to $(E_0,\varphi_0)$. Following the procedure as above we obtain a perfect pairing
\[\Theta\colon E^c[\mathfrak p^\infty]\times T_\mathfrak p E_0^D(k) \to C[\mathfrak p^\infty]. \]

%Then we also have that \[E^c[\mathfrak p^n]\cong \operatorname{Hom}(E^D(k)[\mathfrak p^n], C[\mathfrak p^n]) \text{ and }E^c[\mathfrak p^\infty]\cong \operatorname{Hom}_{A_\mathfrak p}(T_\mathfrak pE_0^D(k), C[\mathfrak p^\infty]).\]

\begin{prop}
    There is a homomorphism  $\phi_{E, R}: T_\mathfrak pE_0(k)\rightarrow E^c[\fp^{\infty}](R)$ such that the natural exact sequence 
$$0 \rightarrow E^c[\fp^{\infty}] \to E[\fp^{\infty}] \rightarrow T_\mathfrak pE_0(k)\otimes(K_{\fp}/A_{\fp}) \to 0$$
  comes from the exact sequence 
\begin{align}
    0 \rightarrow T_{\fp}E_0(k) \to T_{\fp}E_0(k)\otimes_{ A_{\fp}} K_{\fp} \rightarrow T_\mathfrak pE_0(k)\otimes_{ A_{\fp}} (K_{\fp}/A_{\fp}) \to 0 \label{modpitorsion}
\end{align}
  via pushing out along $\phi_{E, R}$. 
\end{prop}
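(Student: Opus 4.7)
The strategy is to recast the existence of $\phi_{E,R}$ as the bijectivity of a connecting map in a long exact $\operatorname{Ext}$ sequence, which becomes tractable because $T_\fp E_0(k) \otimes_{A_\fp} K_\fp$ is a one-dimensional $K_\fp$-vector space while $M \coloneqq E^c[\fp^\infty](R)$ is $\fp$-adically complete of bounded exponent (as $R$ is artinian with nilpotent maximal ideal).

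First, I would verify the short exactness of the target sequence of $A_\fp$-modules. Left exactness is immediate; the identification of the right-hand term with $E^\mathrm{et}[\fp^\infty](R)$ follows from the fact that $E^\mathrm{et}[\fp^n]$ is finite \'etale over the strictly henselian $\operatorname{Spec}(R)$, hence constant with value $E_0^\mathrm{et}[\fp^n](k) = E_0[\fp^n](k)$; surjectivity of $E[\fp^n](R) \to E^\mathrm{et}[\fp^n](R)$ comes from the smoothness of $E^\mathrm{et}[\fp^n]$ over $R$, which lifts sections from $\breve k$ to $R$.

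Next, giving a $\phi_{E,R}$ realising the target sequence as the push-out of (\ref{modpitorsion}) is equivalent to finding a preimage of the extension class $[\ast]\in \operatorname{Ext}^1_{A_\fp}(T_\fp E_0(k)\otimes K_\fp/A_\fp,\, M)$ under the connecting map $\delta$ appearing in
\[ \operatorname{Hom}(T_\fp E_0(k)\otimes K_\fp, M) \to \operatorname{Hom}(T_\fp E_0(k), M) \xrightarrow{\delta} \operatorname{Ext}^1(T_\fp E_0(k)\otimes K_\fp/A_\fp, M) \to \operatorname{Ext}^1(T_\fp E_0(k)\otimes K_\fp, M), \]
obtained by applying $\operatorname{Hom}_{A_\fp}(-,M)$ to (\ref{modpitorsion}). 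It thus suffices to show that both flanking terms vanish, so that $\delta$ is a bijection. Since $E_0$ is ordinary of rank two, $T_\fp E_0(k)\cong A_\fp$ and so $T_\fp E_0(k)\otimes K_\fp\cong K_\fp$; the vanishing $\operatorname{Hom}_{A_\fp}(K_\fp, M)=0$ is clear, since the image would be simultaneously $\fp$-divisible and $\fp^N$-torsion for some $N$.

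The hard part will be the vanishing $\operatorname{Ext}^1_{A_\fp}(K_\fp, M)=0$. I would apply $\operatorname{Hom}_{A_\fp}(-,M)$ to the extension $0\to A_\fp \to K_\fp\to K_\fp/A_\fp\to 0$; writing $K_\fp/A_\fp = \varinjlim_n \fp^{-n}/A_\fp$ one computes $\operatorname{Ext}^1_{A_\fp}(K_\fp/A_\fp, M) = \varprojlim_n M/\fp^n M$, which equals $M$ because $M$ is killed by some $\fp^N$. One then has to check that under this identification the connecting map $M = \operatorname{Hom}(A_\fp,M) \to \operatorname{Ext}^1(K_\fp/A_\fp, M)=M$ is the identity, by tracing the class of the canonical extension $0\to A_\fp \to K_\fp\to K_\fp/A_\fp\to 0$ through the resolutions $0\to A_\fp\xrightarrow{\varpi^n}A_\fp\to A_\fp/\fp^n\to 0$ and through the inverse system. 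Once this is done, $\operatorname{Ext}^1(K_\fp, M)$ appears as the cokernel of the identity and hence vanishes, $\delta$ becomes an isomorphism, and the unique $\phi_{E,R}$ making the push-out diagram commute exists.
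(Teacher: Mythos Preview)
Your argument is correct and follows essentially the same homological route as the paper: both identify $T_\fp E_0(k)$ with $A_\fp$ and compute $\operatorname{Ext}^1_{A_\fp}(K_\fp/A_\fp,\,E^c[\fp^\infty](R))$ via the finite-level resolutions $0\to A_\fp \xrightarrow{\varpi^n} A_\fp \to A_\fp/\fp^n \to 0$, concluding that this $\operatorname{Ext}^1$ is naturally $\operatorname{Hom}(A_\fp, E^c[\fp^\infty](R))$. You package the conclusion a bit differently, as the vanishing of $\operatorname{Hom}$ and $\operatorname{Ext}^1$ of $K_\fp$ into $M$ so that the connecting map $\delta$ is bijective, but the content is identical.

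The one substantive difference is that the paper, following Katz and Messing, does not stop at abstract existence but writes $\phi_{E,R}$ down explicitly: for a compatible system $(x_n)_n$ with $x_n\in E_0[\fp^n](k)$, pick any lift $\tilde{x}_n\in E(R)$ and set $\phi_{E,R}((x_n)_n)=[\varpi^n]\tilde{x}_n$; this is well-defined and lands in $E^c[\fp^\infty](R)$ because $\fm^{N}=0$ in the artinian ring $R$. Your abstract argument suffices for the proposition as stated, but this explicit formula is what is actually used downstream (in identifying the map $f_y^*$ in the Serre--Tate coordinate computation), so you would eventually need to supply it.
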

 \begin{proof}
We follow  \cite[Appendix, 2.5.5]{MessingCrystals} and \cite[p. 151]{Ka1981}. We proceed in two steps: we show that every extension of $(K_{\fp}/A_{\fp})$ by $E^c[\fp^{\infty}]$ is determined uniquely by a morphism $\phi_{E, R}\colon T_\mathfrak pE_0(k)\rightarrow E^c[\fp^{\infty}](R)$ as above, and then  we construct $\phi_{E, R}$. 

We show that for every extension of $T_\mathfrak pE_0(k)\otimes(K_{\fp}/A_{\fp})$ by $E^c[\fp^{\infty}]$  comes via pushout from a map as above. By identifying $T_{\fp}E_0(k)$ with $A_{\fp}$, it is enough to consider  extension of $K_{\fp}/A_{\fp}$ by $E^c[\fp^{\infty}]$ and its relation with morphisms $A_\mathfrak p\to E^c[\fp^{\infty}] $. We consider the $\fp^{n}$-torsion of \eqref{modpitorsion} that can be written as
\begin{align*}
    0 \rightarrow  A_{\fp} \stackrel{\varpi^n}{\to} A_{\fp}  \rightarrow A_{\fp} /\varpi^n A_{\fp} \to 0
\end{align*}
and we take $\mathrm{Hom}( \phantom{A}, E^c[\fp^{n}])$ and get the long exact sequence
\[
\mathrm{Hom}(A_{\fp}, E^c[\fp^{n}]) \stackrel{\varpi^n}{\to}  \mathrm{Hom}(A_{\fp}, E^c[\fp^{n}]) \to \mathrm{Ext}^1(A_{\fp}/\varpi^n A_{\fp}, E^c[\fp^{n}]) \to \mathrm{Ext}^1(A_{\fp}, E^c[\fp^{n}]).
\]
and as the last term vanishes by flatness of $A_{\fp}$,  we get 
\begin{equation} \label{eq:exthom}
\mathrm{Ext}^1(A_{\fp}/\varpi^n A_{\fp}, E^c[\fp^{n}])  \cong \mathrm{Hom}(A_{\fp}, E^c[\fp^{n}]) / \varpi^n \mathrm{Hom}(A_{\fp}, E^c[\fp^{n}]). 
\end{equation}
Taking the inverse limit, we get that $\mathrm{Ext}^1(K_{\fp}/ A_{\fp}, E^c[\fp^{\infty}]) $ is parametrised by $\mathrm{Hom}(A_{\fp}, E^c[\fp^{\infty}])$.

  For the second step, first note that since $E$ is  Zariski local over $\operatorname{Spec}(R)$ isomorphic to $\mathbb A_R^1$ and $R$ is  local Artinian, we can assume that $E$ is  $\bA_{R}^1$.  We also note that, as $R$ is Artinian, we have $\varpi^{n+1}=0$, for a certain $n\geq 0$. The inverse map of \eqref{eq:exthom}  can be constructed as follows (see (2.6) of \cite{MessingCrystals}):  take any torsion point $x_n$ in $E_0[\mathfrak p^{n}](k)$, and consider a lift $\tilde{x}_n \in E(R)=R$. We claim $[\varpi^n]\tilde{x}_n$ is independent of $\tilde{x}_n$ and belongs to  $E^c[\fp^{\infty}](R)$; indeed take a second lift $\tilde{x}'_n$. Note that the difference $[\varpi^n](\tilde{x}_n - \tilde{x}'_n)$ is $0$ as modulo $\mathfrak m$ they coincide, so $(\tilde{x}_n - \tilde{x}'_n) \in {\mathfrak m} R$, which contains $\varpi$, so $[\varpi^n](\tilde{x}_n - \tilde{x}'_n) \in \varpi^{n+1}R=0$. 
  We now show that it belongs to $E^c(R)$ because of the hypothesis $[\varpi^n]x=0$. Indeed, if we look at $[\varpi^n]\tilde{x}_n \bmod E^c(R)$ we get $[\varpi^n]x_n=0$. This is well defined, independently of $n$, so it extends to $ T_{\fp}E_0(k)$. Thus we constructed a map $\phi_{E, R}\colon T_\mathfrak pE_0(k)\rightarrow E^c[\fp^{\infty}](R)$.
\end{proof}

Now, using $\phi_{E, R}$ we define the following $A_\fp$-bilinear form $q_{E, R}\colon T_\mathfrak p E_0(k)\otimes T_\mathfrak p E_0^D(k)\rightarrow C[\mathfrak p^\infty](R)$ by
$$q_{E, R}(\alpha, \alpha_D)= \Theta(\phi_{E, R}(\alpha), \alpha_D).$$ 

But $q_{E, R}$ is uniquely determined by $\phi_{E, R} \in \operatorname{Hom}_{A_\mathfrak p}(T_\mathfrak p E_0(k), E^c[\mathfrak p^\infty](R))$. In fact, the pairing $\Theta$ induces an isomorphism $E^c[\mathfrak p^\infty](R) \cong \operatorname{Hom}_{A_\mathfrak p}(T_{\mathfrak p} E^D_0(k), C[\mathfrak p^\infty](R))$ and thus we obtain
\[
\operatorname{Hom}_{A_\mathfrak p}(T_\mathfrak p E_0(k), \operatorname{Hom}_{A_\mathfrak p}(T_{\mathfrak p} E^D_0(k), C[\mathfrak p^\infty](R))
)  \cong
\operatorname{Hom}_{A_\mathfrak p}(T_\mathfrak p E_0(k)\otimes T_\mathfrak p E_0^D(k), C[\mathfrak p^\infty](R)
).  \]

To prove Theorem \ref{thm:SerreTate}, we need to show that this  uniquely determines  the lift of $E_0$. Essentially by the classical result of Drinfeld \cite[\S 4, 5]{D1974} on moduli space of Drinfeld modules, to determine a deformation of a Drinfeld module it is enough to deform the associated $\varpi$ divisible group.

We can now complete the proof of the second point of Proposition \ref{prop:ordlocal}: by considering the formal moduli of deformations of $(E_0,\varphi_0)$ and applying Theorem \ref{thm:SerreTate} we get that,   
\[
\widehat{\mathcal{O}}_{{X_0(\mathfrak p)}_{\breve A_{\fp}},y} \cong \operatorname{Hom}_{A_\mathfrak p}(T_\mathfrak p E_0(k)\otimes T_\mathfrak p E_0^D(k), C[\mathfrak p^\infty])
\]
and
\[
\widehat{\mathcal{O}}_{X_{\breve A_{\fp}},x} =\operatorname{Hom}_{A_\mathfrak p}(T_\mathfrak p E'_0(k)\otimes T_\mathfrak p {E'_0}^D(k), C[\mathfrak p^\infty])
\]
where $E'_0$ is either $E_0/\mathrm{Ker}(F)$ when $y\in X_0(\mathfrak p)^{\ord, F}_1$ or $E_0/\mathrm{Ker}(V)$ if $y\in X_0(\mathfrak p)^{\ord, V}_1$. 
Now, note that $\mathrm{Ker}(F)(k)=\left\{ 0 \right\}$, so in the first case we get 
\[
f^*_y:\operatorname{Hom}_{A_\mathfrak p}(T_\mathfrak p E_0(k)\otimes T_\mathfrak p E_0^D(k), C[\mathfrak p^\infty]) \cong \operatorname{Hom}_{A_\mathfrak p}(T_\mathfrak p E'_0(k)\otimes T_\mathfrak p {E'_0}^D(k), C[\mathfrak p^\infty]).
\]

In the second case, $\mathrm{Ker}(V)(k) \cong A_{\fp} /\varpi A_{\fp}$ is the kernel of  multiplication by $\varpi$ on  $E_0(k)[\varpi^{\infty}]$, which proves that $f_y^*$ is $[\varpi](X)$.

\begin{cor} \label{cor:traceord}Let $x\in X^{\ord}_1(k)$ and $y\in X_0(\mathfrak p)(k)$ such that $p_1\circ y= x$, then 
\begin{enumerate}
    \item if $y\in X_0(\fp)^{F}_1$ then the map ${\rm tr}_{p_1}\colon\mathcal{O}_{X_0(\mathfrak p), y}\to 
(p_1^!\mathcal{O}_X)_{y}$  is an isomorphism. 
    \item If $y\in X_0(\fp)^{V}_1$ then the map ${\rm tr}_{p_1}\colon\mathcal{O}_{X_0(\mathfrak p), y}\to 
(p_1^!\mathcal{O}_X)_y$ factors into an isomorphism $\mathcal{O}_{X_0(\mathfrak p), y}\cong  \varpi
(p_1^!\mathcal{O}_X)_y$. 
\end{enumerate}
\end{cor}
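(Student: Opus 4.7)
The plan is to reduce, via the local description of Proposition~\ref{prop:ordlocal}(2), to a concrete trace computation for the morphism $\breve A_\fp\llbracket X\rrbracket \to \breve A_\fp\llbracket X\rrbracket$ given by $X \mapsto [\varpi](X)$. Since $p_1$ is finite flat, one has $p_1^!\mathcal O_X \cong \mathcal{H}om_{\mathcal O_X}(p_{1,*}\mathcal O_{X_0(\fp)}, \mathcal O_X)$, and the formation of both $p_1^!\mathcal O_X$ and the trace map ${\rm tr}_{p_1}$ commutes with flat base change. The passage from the local ring at $y$ to the completion of its strict henselization is faithfully flat, so injectivity and equality of ideals can be checked after this base change.

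The $F$-case is then immediate: $f_y^* = \mathrm{id}$ makes $p_1$ an isomorphism of completed strict henselizations, and the trace map of an isomorphism is tautologically an isomorphism. In the $V$-case, write $A' = \breve A_\fp\llbracket T\rrbracket$ with $T$ a formal variable corresponding to $[\varpi](X)$, and $B = \breve A_\fp\llbracket X\rrbracket$. Then $B = A'[X]/(f(X))$ where $f(X) = [\varpi](X) - T$ is monic in $X$ of degree $q^d$, since the leading term of the Carlitz polynomial $[\varpi]$ is $X^{q^d}$. In particular $B$ is free over $A'$ of rank $q^d$ with basis $1, X, \ldots, X^{q^d-1}$, placing us in the classical setting of a monogenic finite flat extension.

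The crucial observation is the derivative computation $f'(X) = [\varpi]'(X) = \varpi$: writing $[\varpi](X) = \varpi X + \sum_{i=1}^{d} a_i X^{q^i}$ with $a_d = 1$, the derivatives $(X^{q^i})' = 0$ for $i \geq 1$ in characteristic $p$, so only the linear coefficient survives. For a monogenic extension $B = A'[X]/(f(X))$ the classical different formula (see Serre, \emph{Local Fields}, Ch.~III, Prop.~11) identifies $\omega_{B/A'} = \operatorname{Hom}_{A'}(B, A')$ with $B$ as a $B$-module, sending the trace form ${\rm tr}_{B/A'}$ to $f'(X) = \varpi$. Hence the trace map $b \mapsto b \cdot {\rm tr}_{B/A'}$ is multiplication by $\varpi$, which is injective (as $\varpi$ is a nonzero-divisor in $B$) with image exactly $\varpi \cdot (p_1^!\mathcal O_X)_y$. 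The only real input beyond Proposition~\ref{prop:ordlocal} is this derivative computation combined with the classical monogenic-different formula; I expect the main verification to be careful bookkeeping with the faithfully flat reduction to the completed strict henselization, so that the ideal-theoretic conclusion transports back to the stalk.
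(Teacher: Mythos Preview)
Your proof is correct and follows essentially the same strategy as the paper: reduce via Proposition~\ref{prop:ordlocal}(2) to analyzing the trace for the monogenic extension $\breve A_\fp\llbracket X\rrbracket \to \breve A_\fp\llbracket X\rrbracket$, $X\mapsto [\varpi](X)$. The only difference is in the execution of that last step: the paper argues directly from the fact that all non-leading coefficients of $[\varpi](X)$ lie in $\varpi A$ (with linear coefficient exactly $\varpi$), while you package the same information via the monogenic different formula and the derivative computation $[\varpi]'(X)=\varpi$ --- an equivalent and slightly slicker route to the identical conclusion.
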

\begin{proof}
    Thanks to Proposition \ref{prop:ordlocal} it is enough to study the trace of the map $f_y$. The first case is immediate. The second one is as follows. Consider the $\breve{A}_\mathfrak p\llbracket X\rrbracket$ as a $\breve{A}_\mathfrak p\llbracket X\rrbracket$-module, where the structure is induced by $f_y^*$.  With this, $\{1,\dots, X^{d-1}\}$ is a basis. Since the coefficients of $[\varpi](X)$ beside the leading term are multiple of $\varpi$ and the $X$ term is $\varpi$ \cite[Corollary 2.12]{CCar}, we have that the trace of $f_y^{\ast}$ satisfies that the image of $\breve{A}_\mathfrak p\llbracket X\rrbracket$ is equal to $\varpi\breve{A}_\mathfrak p\llbracket X\rrbracket$. Thus obtaining the desired result. 
\end{proof}

\begin{rmk}
    We remark that for the classical modular curve case, on local coordinates the map $p_1$ on $X_0(p)^{V}_1$ is given by $X \mapsto [p](X)=(1+X)^p-1$. This is the group law of the roots of unity $\mu_{p^\infty}$.

\end{rmk}

\section{Cohomological correspondences} \label{sec:cohcorr}
The purpose of this section is twofold. First, to define the correspondence $T_\mathfrak p$, central in this work, and to prove its integrability. Secondly, to review and apply the Serre duality to this correspondence. For the first point, we use the description of $T_\mathfrak p$ on the Verschiebung and Frobenius locus of $X_0(\fp)_1$ and the Serre--Tate coordinates. For the second point, we use a Kodaira--Spencer isomorphism for Drinfeld modular curves \cite{Ge1990,Tag1995}.

\subsection{\texorpdfstring{The $T_{\fp}$ correspondence} {The Tp correspondence} }\label{subsec:corrTp}
Let $\omega$ be the line bundle of $X$ which gives rise to Drinfeld modular forms (see \S5 in \cite{Ha2022cpt}). In the dense open subset $Y$, defined in \S\ref{ss:Drinfeld modular curves}, it is given by $(\operatorname{Lie}\mathcal{E})^{-1}$ where $\cE$ is the universal object. For $k\in \mathbb Z$, we let $\pi_k:p_2^*\omega^k \to p_1^*\omega^k$ be the rational map induced from $\pi \colon p_1^*\mathcal{E} \to p_2^*\mathcal{E}$. We define the 
unnormalized ``cohomological correspondence''  
\[T_\fp^{\operatorname{naive}}\colon p_2^*\omega^k \to p_1^!\omega^k. \]
as the tensor map of $\pi_k$ and the trace map ${\rm tr}_{p_1}\colon \mathcal{O}_{X_0(\fp)}\to p_1^!\mathcal O_X$. Observe that, from the projection formula we have $p_1^!\omega^k=p_1^*\omega\otimes_{\mathcal{O}_{X_0(\mathfrak p)}}p_1^!\mathcal{O}_X$. We define $T_\fp$ by $\varpi^{-\min(1,k)}T^{\operatorname{naive}}_\mathfrak{p}$.
\begin{prop} \label{prop:Tpint}$T_\fp$ is a morphism of sheaves (optimally) defined over $\operatorname{Spec}(A_\mathfrak p)$.
\end{prop}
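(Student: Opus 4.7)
The plan is to reduce integrality of $T_\fp$ to a valuation computation at the two codimension-one generic points of the special fiber. Since $X_0(\mathfrak p)$ is a regular scheme of dimension two and $p_1^!\omega^k$ is a line bundle, a rational map of line bundles that has non-negative $\varpi$-valuation at every codimension-one point automatically extends to a genuine morphism over $X_0(\mathfrak p)$ (the target is reflexive, so Hartogs applies). In particular, supersingular points and cusps, which are of codimension $\geq 2$, require no separate attention. On the generic fiber over $\operatorname{Spec}(A_\fp)$, $\pi$ is étale and $\varpi$ is invertible, so there is nothing to check there. It therefore suffices to control $v_\xi(T_\fp^{\operatorname{naive}})$ at the generic points $\xi_F$ and $\xi_V$ of the two irreducible components $X_0(\mathfrak p)_1^F$ and $X_0(\mathfrak p)_1^V$ of the special fiber, where $\varpi$ is a uniformizer.

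At $\xi_F$, Proposition \ref{prop:corr-special} identifies $\pi$ with the Frobenius isogeny $F\colon \mathcal{E}\to \mathcal{E}^{(\mathfrak p)}$. To evaluate $\pi^*$ on $\omega$ I would dualize the identity $V\circ F=[\varpi]_{\mathcal{E}}$ to get $F^*\circ V^*=\varpi\cdot\mathrm{id}$ on $\omega_{\mathcal{E}}$; since $\ker V$ is étale on the ordinary locus, $V$ is étale there, and consequently $V^*$ is an isomorphism of invariant differentials, forcing $\pi^*=F^*$ to have $\varpi$-valuation exactly $1$ at $\xi_F$. Thus $\pi_k=(\pi^*)^{\otimes k}$, extended as a rational map from the generic fiber, has valuation $k$. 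Combined with Corollary \ref{cor:traceord}(1), which gives that $\mathrm{tr}_{p_1}$ is an isomorphism at $\xi_F$, this yields
\[
v_{\xi_F}(T_\fp^{\operatorname{naive}})=k,\qquad v_{\xi_F}(T_\fp)=k-\min(1,k)=\max(0,k-1)\geq 0.
\]

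At $\xi_V$ the roles swap. By Proposition \ref{prop:corr-special}, $\pi$ is identified with Verschiebung $V$, which is étale on the ordinary locus, so $\pi^*=V^*$ is an isomorphism and $v_{\xi_V}(\pi_k)=0$ for every $k\in\mathbb Z$. On the other hand, Corollary \ref{cor:traceord}(2) gives $v_{\xi_V}(\mathrm{tr}_{p_1})=1$. Therefore
\[
v_{\xi_V}(T_\fp^{\operatorname{naive}})=1,\qquad v_{\xi_V}(T_\fp)=1-\min(1,k)=\max(0,1-k)\geq 0.
\]
Both valuations being non-negative proves integrality of $T_\fp$ over $\operatorname{Spec}(A_\fp)$. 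Optimality of the normalization $\varpi^{-\min(1,k)}$ is then immediate: for every $k\in\mathbb Z$, at least one of the two valuations of $T_\fp$ vanishes (Verschiebung when $k\geq 1$, Frobenius when $k\leq 1$), so no strictly larger power of $\varpi^{-1}$ can be factored out of $T_\fp^{\operatorname{naive}}$ while preserving integrality.

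The main technical input is thus the precise value $v(\pi^*)=1$ on the Frobenius component; everything else is bookkeeping. That input, however, is essentially a one-line observation coming from the identity $V\circ F=[\varpi]$ combined with étaleness of Verschiebung on the ordinary locus, both of which are already implicit in \S\ref{s:serre-tate} and Corollary \ref{cor:traceord}. I therefore do not anticipate any serious obstacle in the proof.
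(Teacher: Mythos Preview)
Your proposal is correct and follows essentially the same approach as the paper: both reduce to the two codimension-one generic points of the special fiber, invoke Proposition~\ref{prop:corr-special} to identify $\pi$ with $F$ or $V$, use the \'etaleness of $V$ on the ordinary locus to see that $V^*$ is an isomorphism (whence $v(\pi^*)=1$ on the Frobenius component via $V\circ F=[\varpi]$, which is exactly the content of \eqref{eq:accionp}), and feed in Corollary~\ref{cor:traceord} for the trace. Your valuation bookkeeping and explicit check of optimality are a slightly cleaner packaging of what the paper expresses as the isomorphisms $(p_2^*\omega^k)_y\xrightarrow{\sim}\varpi^k(p_1^!\omega^k)_y$ and $(p_2^*\omega^k)_y\xrightarrow{\sim}\varpi(p_1^!\omega^k)_y$ on the two components.
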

\begin{proof} We can check this condition outside a codimension 2 subspace of $X_0(\fp)$ i.e., outside a finite number of points. Outside of the special fiber the map $p_1$ is well behaved and thus it is enough to look at the special fiber. We will work separately on $X_0(\fp)_1^V$ and $X_0(\fp)_1^F$. 

Firstly, in a dense open subset of $X_0(\fp)_1^{\mathrm{ord},F}$, by Proposition \ref{prop:corr-special} the map $\pi_1$ corresponds  to
\[\operatorname{Lie}^{-1}(F): \omega^{q} \to \omega ,\] 
which is the dual of the map on differentials attached to the Frobenius $F$. Therefore, by the definition of the structure of the $A$-module in $\omega$, the equation \eqref{eq:accionp} and the fact that over $X_0(\fp)_1^{\mathrm{ord}}$ the universal object $\cE$ is ordinary if and only if $\operatorname{ker} V$ is \'etale (see \cite[Proposition 2.14]{Shas2007}), we have the following isomorphism
\begin{equation}\label{eq:TpFrob}
    p_2^*\omega_y \xrightarrow{\sim} \varpi (p_1^\ast\omega)_y
\end{equation} 
for each $y$ in the dense open subset of $X_0(\fp)_1^{\mathrm{ord},F}$ mentioned above. By Corollary \ref{cor:traceord}, the trace map  ${\rm tr}_{p_1}\colon\mathcal{O}_{X_0(\mathfrak p)}\to 
p_1^!\mathcal{O}_X$ is an isomorphism. Hence, $T^{\operatorname{naive}}_\mathfrak{p,\xi}$ induces an isomorphism 
\begin{equation}\label{e:T_p in F}
    (p_2^*\omega^k)_y \to \varpi^k(p_1^{!}\omega^k)_y
\end{equation}
From this we deduce the proposition in this case. 

Now, over $X_0(\fp)_1^{\mathrm{ord},V}$, $\pi_1$ corresponds to $\operatorname{Lie}^{-1}(V)$  by Proposition \ref{prop:corr-special}. Moreover, remark that as before $\operatorname{Lie}^{-1}(V)$  is an isomorphism since $V$ is \'etale by equation \eqref{eq:accionp}.  Finally, by Corollary \ref{cor:traceord} we have that the trace morphism ${\rm tr}_{p_1}$ factors into an isomorphism
\[\mathcal{O}_{X_0(\fp),y}\to \varpi(p_1^!\mathcal O_X)_y.\]
Hence, $T^{\operatorname{naive}}_\mathfrak{p,\xi}$ also induces an isomorphism
\[(p_2^*\omega^k)_y \to \varpi (p_1^!\omega^k)_y.\]
\end{proof}

Now, we are going to use the two finite flat morphisms $p_1$ and $p_2$, and  $T_\fp$, to define a morphism between cohomologies, that we will also denote by $T_\fp$. First we recall that $(p_2^*,p_{2,*})$ is a pair of functors that are adjoint. Furthermore, as $p_1$ is proper (note that $p_1$ is finite), we have that $p_{1,*}=p_{1,!}$, and thus $(p_{1,*},p_1^!)$ is also an adjoint pair.

With these properties and $T_\fp$ we  construct the following morphism
\begin{align*}
R\Gamma(X;\omega^k)=q_*\omega^k \xrightarrow{\text{Adj.}} & q_*p_{2,*}p^*_2\omega^k&\\
=&q_*p_2^*\omega^k\xrightarrow{q_*T_p}q_*p_1^!\omega^k=q_*p_{1,*}p_1^!\omega^k \xrightarrow{\text{Adj.}} q_*\omega^k=R\Gamma(X;\omega^k),
\end{align*}
where Adj. are the counit or unit of the corresponding adjunction and $q$ is the structural morphism. 
\begin{rmk}\begin{itemize}
    \item Over $\operatorname{Spec}(K_\mathfrak{p})$, the morphism $p_1$ is \'etale, thus $p_1^!=p_1^*$ \cite[Theorem 4.8.1]{Lip1960}. In this situation, our cohomological correspondence at degree $0$ coincides with the Hecke correspondence in \cite[Section 3.2]{NiRo2021}. But, since we are working over $\operatorname{Spec}(A_\mathfrak{p})$, we need to use $p_1^!$, as in \cite{BoPi2022}, to obtain the adjoint morphism.  
    \item In the notation of \cite{BoPi2022}, we  have defined a \emph{cohomological correspondence} of $\omega^k$ over $X$.
\end{itemize}
\end{rmk}

\subsection{Serre duality}\label{subsec:Serredual}  We let ${\bf D}=\operatorname{R\underline{Hom}}_{X}(-,q^!A_\fp)$ be the dualizing functor. For each coherent sheaf $\mathcal F$, ${\bf D(\mathcal F)}$ is concentrated in degree $1$. Since $q: X\rightarrow \mathrm{Spec}(A_{\fp})$ is smooth, we have that $q^!A_\fp=\Omega^1_{X/A_\fp}[1]$ (shifted by $1$). Moreover, alos using that $q_*=q_!$ ($q$ is proper)  we obtain the following Serre duality
\begin{align*}
\operatorname{H}^0(X,\omega^k)^*=\operatorname{R\underline{Hom}}_{\operatorname{Spec}(A_\fp)}(q_*\omega^k,A_\fp)&={\bf D}(\omega^k)\\ &=\operatorname{R\underline{Hom}}_{\operatorname{Spec}(A_\fp)}(A_\fp,q_*(q^!A_\fp\otimes(\omega^k)^*))\\ 
&=\operatorname{H}^0(A_\fp,q_*(\omega^{-k}\otimes\Omega^1_{X/A_\fp})[1])\\
&=\operatorname{H}^1(X,\omega^{-k}\otimes\Omega^1_{X/A_\fp}).
\end{align*}
In order to get a more concrete description of the module $H^1(X,\omega^{-k}\otimes\Omega^1_{X/A_\fp})$, we use the Kodaira--Spencer map developed by \cite{Ge1990,Tag1995}. Let us recall the  ingredients of that construction in the Drinfeld modular curve case.

We start with a general discussion about the Kodaira--Spencer map, following \cite{Ha2020dual}. Recall for a given Drinfeld module $E$ over an affine scheme $S$, there is a dual Drinfeld module that we denote by $E^D$. Furthermore, there is a version of de Rham cohomology that we denote by ${\rm DR}(E,\mathbb{G}_a)$, and the Hodge filtration \cite[eq. (2.8)]{Ha2020dual}:
\begin{equation}\label{eq:hod}
    0\to {\rm Lie}(E)^{-1}\to {\rm DR}(E,\mathbb{G}_a)\to {\rm Lie}(E^D)\to 0
\end{equation}
functorial on $E$.

The Kodaira--Spencer map for $E$ over an $A_\mathfrak p$-scheme $S=\operatorname{Spec}(B)$ is defined by 
\begin{equation}\label{eq:KS}
    {\rm KS} \colon {\rm Der}_{A_\mathfrak p}(B) \to {\rm Hom}_B({\rm Lie}(E)^{-1},{\rm Lie}E^D), 
\end{equation}
with dual map given by
\[  {\rm KS}^\vee \colon {\rm Lie}(E)^{-1}\otimes_B {\rm Lie}(E^D)^{-1}  \to  \Omega^1_{S/{A_\mathfrak p}}.\]
We  apply this to the universal Drinfeld module $\mathcal E$ and $S=Y$. This gives us the relation
\[\omega\otimes \omega_D \xrightarrow[]{\sim} \Omega^1_{X/A_\mathfrak p}(2D) \]
where $D$ is the boundary divisor  of the noncompactified curve $Y$ in $X$ and $\omega_D$ is described as ${\rm Lie}(\mathcal E^D)^{-1}$ over $Y$ (see proof of \cite[Lemma 4.1]{Ha2020dual}). Finally, this allows us to consider ${\bf D}(T^{\text{naive}}_\mathfrak p)$ as a morphism $p_1^*(\omega^{1-k}\otimes \omega_D(-2D)) \to p_2^!(\omega^{1-k}\otimes \omega_D(-2D))$.

On the other hand, recall that we have an universal isogeny $\pi\colon p_1^* \mathcal E\to p_2^*\mathcal E$. Applying the dual and using the fact that duality commutes with base change, we have the following isogeny $\pi^D \colon p_2^* \mathcal E^D\to p_1^*\mathcal E^D$. The tangent space map induces morphisms $\pi^D_{k} \colon p_1^*\omega_D^{k} \to p_2^* \omega_D^k$ for every ${k}\in \mathbb Z$. Observe that $\pi_1^D={\rm Lie}(\pi^D)^{-1}$.  Using the functoriality of Equation \eqref{eq:hod}, applied to $\pi$, we get the following result.

\begin{prop}\label{prop:KS&Serre}
Let $\omega_{X_0(\mathfrak p)/A_\mathfrak p}$ be the dualizing module of $X_0(\mathfrak p)$ over $A_\mathfrak p$. The following diagram is commutative
 \[\begin{tikzcd}
        & \omega_{X_0(\mathfrak p)/A_\mathfrak p}(D_0(\mathfrak p))& \\
        p_2^*\Omega^1_{X/A_\mathfrak p}(D)\arrow[ru, "{\rm tr}_{p_2}"]& & p_1^*\Omega^1_{X/A_\mathfrak p}(D) \arrow[lu, "{\rm tr}_{p_1}"']\\ 
       p_2^*\omega\otimes p_2^*\omega_D\arrow[u, "p_2^*{\rm KS}^\vee"] \arrow[rr,"\pi_1\otimes (\pi^D_1)^{-1}"] & & p_1^*\omega \otimes p_1^*\omega_D.\arrow[u, "p_1^*{\rm KS}^\vee"']
    \end{tikzcd}\]
\end{prop}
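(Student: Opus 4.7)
The plan is to proceed in three stages: (1) reduce the verification to a dense open where the moduli interpretation is available, (2) invoke the functoriality of the Hodge filtration \eqref{eq:hod} under the universal isogeny to compare the two pullbacks of ${\rm KS}^\vee$, and (3) identify the resulting comparison morphism on pullbacks of $\Omega^1_{X/A_\mathfrak p}$ with the composition of trace maps via Grothendieck duality.

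First, I observe that all four sheaves appearing in the diagram are reflexive of rank one on the Cohen--Macaulay curve $X_0(\mathfrak p)$, so it suffices to check commutativity on the dense open $Y_0(\mathfrak p)$ lying away from the cuspidal divisors $D$ and $D_0(\mathfrak p)$ (and indeed away from any codimension $\geq 1$ locus where the sheaves or maps degenerate). On this open, $\omega_{X_0(\mathfrak p)/A_\mathfrak p}(D_0(\mathfrak p))$ coincides with $\Omega^1_{Y_0(\mathfrak p)/A_\mathfrak p}$, the morphisms $p_1$, $p_2$ are finite flat, and the universal isogeny $\pi\colon p_1^*\mathcal{E}\to p_2^*\mathcal{E}$ and its dual $\pi^D\colon p_2^*\mathcal{E}^D\to p_1^*\mathcal{E}^D$ are defined, with $\pi_1^D$ an isomorphism that can be inverted.

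Second, I will apply the functoriality of \eqref{eq:hod} simultaneously to $\pi$ and to $\pi^D$: this produces morphisms of short exact sequences of Hodge filtrations that are compatible with the Gauss--Manin connection on ${\rm DR}(\mathcal{E},\mathbb{G}_a)$. The Kodaira--Spencer map of \eqref{eq:KS} is, by construction of \cite{Ge1990,Tag1995}, the composition of the Gauss--Manin connection restricted to the ${\rm Lie}(\mathcal E)^{-1}$ piece of the filtration, followed by projection onto the quotient ${\rm Lie}(\mathcal E^D)$. The naturality of this construction under $\pi$ and $\pi^D$ therefore translates, after dualizing, into an equality
\[
p_1^*{\rm KS}^\vee\,\circ\,\bigl(\pi_1\otimes(\pi^D_1)^{-1}\bigr)\;=\;\alpha\,\circ\,p_2^*{\rm KS}^\vee,
\]
for a morphism $\alpha\colon p_2^*\Omega^1_{X/A_\mathfrak p}\to p_1^*\Omega^1_{X/A_\mathfrak p}$ induced by the correspondence on cotangents.

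Third, I will show that $\alpha$ is exactly the morphism that intertwines the two dualizing traces, namely ${\rm tr}_{p_1}\circ\alpha={\rm tr}_{p_2}$ as maps into $\omega_{X_0(\mathfrak p)/A_\mathfrak p}(D_0(\mathfrak p))$. This is a consequence of Grothendieck duality for the finite flat morphisms $p_1,p_2$: the map $\alpha$ is characterized by transporting the trace structure on $p_2$ to that on $p_1$, precisely because the universal isogeny identifies the two moduli-theoretic structures underlying the two projections. Combining the three stages gives the commutativity. The main obstacle will be step two: to extract the correct formulation of ``functoriality of KS under an isogeny'' in the Drinfeld-module setting, one must chase the connecting-homomorphism description of KS used in \cite{Ge1990,Tag1995} and verify that the map induced by $\pi^D$ on the dual Lie algebras assembles correctly with the map induced by $\pi$ on the primal Lie algebras, including signs and normalizations, so that the square commutes on the nose rather than up to a unit scalar.
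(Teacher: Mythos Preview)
Your overall strategy matches the paper's: reduce to the open $Y_0(\mathfrak p)$, and use functoriality of the Hodge filtration \eqref{eq:hod} under the universal isogeny $\pi$ (and its dual $\pi^D$) to compare the two Kodaira--Spencer maps. Where you diverge is in step~(3), and this is where a real difficulty hides.

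You posit a comparison map $\alpha\colon p_2^*\Omega^1_{X/A_\mathfrak p}\to p_1^*\Omega^1_{X/A_\mathfrak p}$ and then propose to show ${\rm tr}_{p_1}\circ\alpha={\rm tr}_{p_2}$. But there is no natural such $\alpha$: the functoriality of the Gauss--Manin connection you invoke in step~(2) takes place on $Y_0(\mathfrak p)$, so the Kodaira--Spencer maps for $p_1^*\mathcal E$ and $p_2^*\mathcal E$ both land in $\Omega^1_{Y_0(\mathfrak p)/A_\mathfrak p}$, not in $p_i^*\Omega^1_{X/A_\mathfrak p}$. Trying to factor this through an $\alpha$ between the two pullbacks is artificial, and your justification (``$\alpha$ is characterized by transporting the trace structure'') is not an argument.

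The paper sidesteps this entirely. The point is that for each $i$ the composite ${\rm tr}_{p_i}\circ p_i^*{\rm KS}^\vee$ \emph{is} the intrinsic Kodaira--Spencer map ${\rm KS}^\vee_{p_i^*\mathcal E/Y_0(\mathfrak p)}\colon p_i^*\omega\otimes p_i^*\omega_D\to \Omega^1_{Y_0(\mathfrak p)/A_\mathfrak p}$: base-change compatibility of ${\rm KS}$ gives the factorisation through $p_i^*\Omega^1_{X/A_\mathfrak p}$, and under the identification $\omega_{X_0(\mathfrak p)/A_\mathfrak p}\cong p_i^!\mathcal O_X\otimes p_i^*\Omega^1_{X/A_\mathfrak p}$ the natural differential map $p_i^*\Omega^1_{X/A_\mathfrak p}\to\Omega^1_{Y_0(\mathfrak p)/A_\mathfrak p}$ becomes exactly ${\rm tr}_{p_i}$. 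Then the functoriality from your step~(2) gives directly a commutative \emph{triangle}
\[
\begin{tikzcd}[column sep=small]
 & \Omega^1_{Y_0(\mathfrak p)/A_\mathfrak p} & \\
 p_2^*\omega\otimes p_2^*\omega_D \arrow[ur] \arrow[rr,"\pi_1\otimes(\pi_1^D)^{-1}"'] & & p_1^*\omega\otimes p_1^*\omega_D \arrow[ul]
\end{tikzcd}
\]
with both slanted arrows the intrinsic ${\rm KS}^\vee$ over $Y_0(\mathfrak p)$, which is the diagram you want once you unfold each slanted arrow as ${\rm tr}_{p_i}\circ p_i^*{\rm KS}^\vee$. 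So drop $\alpha$ and instead prove the identification ${\rm tr}_{p_i}\circ p_i^*{\rm KS}^\vee={\rm KS}^\vee_{p_i^*\mathcal E/Y_0(\mathfrak p)}$; that is the content of your step~(3).
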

\begin{proof} Like in \cite[Lemma 3.7]{BoPi2022} the key is to show that the following is commutative
   \[\begin{tikzcd}
         \omega_{X_0(\mathfrak p)/A_\mathfrak p}(D_0(\mathfrak p))\arrow[r,"1"] & \omega_{X_0(\mathfrak p)/A_\mathfrak p}(D_0(\mathfrak p)) \\
        p_2^*\Omega^1_{X/A_\mathfrak p}(D)\arrow[u, "{\rm tr}_{p_2}"] & p_1^*\Omega^1_{X/A_\mathfrak p}(D) \arrow[u, "{\rm tr}_{p_1}"']\\ 
       p_2^*\omega\otimes p_2^*\omega_D\arrow[u, "p_2^*{\rm KS}^\vee"] \arrow[r,"\pi_1\otimes (\pi^D_1)^{-1}"]  & p_1^*\omega \otimes p_1^*\omega_D.\arrow[u, "p_1^*{\rm KS}^\vee"']
    \end{tikzcd}\]

Over $X_0(\mathfrak p)$  we have  the universal isogeny $\pi: p_1^* \mathcal E \rightarrow  p_2^*\mathcal E$ and by duality and functoriality  we get a morphism between the two exact sequences \eqref{eq:hod}:  Thus when applying \eqref{eq:KS} to $p_1^*\mathcal E$ and $p_2^*\mathcal E$ over $Y_0(\mathfrak p)=\operatorname{Spec}(B')$, we obtain
\[\begin{tikzcd}
        & \omega_{X_0(\mathfrak p)/A_\mathfrak p}(D_0(\mathfrak p))& \\
       p_2^*\omega\otimes p_2^*\omega_D\arrow[ur, ""] \arrow[rr,"\pi_1\otimes (\pi^D_1)^{-1}"] & & p_1^*\omega \otimes p_1^*\omega_D.\arrow[ul, ""']
    \end{tikzcd}\]
Finally, we note that the Kodaira-Spencer map  $p_i^*\omega\otimes p_i^*\omega_D\to\omega_{X_0(\mathfrak p)/A_\mathfrak p}(D_0(\mathfrak p))$, after identifying  $\omega_{X_0(\mathfrak p)/A_\mathfrak p}\cong p_i^!\mathcal{O}_X\otimes p_i^*\Omega^1_{X/A_\mathfrak p}$, is equal to ${\rm tr}_{p_i}\otimes p_i^*{\rm KS}^\vee$, for $i=1,2$.
\end{proof}

\begin{prop}\label{prop:DualTp} We have ${\bf D}(T^{\text{naive}}_\mathfrak p)$ is equal to $\pi_{1-k}^{-1}\otimes {\rm tr}_{p_2}\otimes \pi^D_1 $.
\end{prop}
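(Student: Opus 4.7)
The plan is to compute $\mathbf{D}(T^{\operatorname{naive}}_\mathfrak p)$ directly from the decomposition $T^{\operatorname{naive}}_\mathfrak p=\pi_k\otimes\operatorname{tr}_{p_1}$ of \S\ref{subsec:corrTp} and identify the result using Proposition~\ref{prop:KS&Serre}. First I would pin down the targets on each side. By Grothendieck duality, the Serre dual of a cohomological correspondence $u\colon p_2^*\mathcal F\to p_1^!\mathcal F$ is a morphism $\mathbf D(u)\colon p_1^*\mathbf D\mathcal F\to p_2^!\mathbf D\mathcal F$ on the \emph{swapped} correspondence. For $\mathcal F=\omega^k$ one has $\mathbf D\omega^k=\omega^{-k}\otimes\Omega^1_{X/A_\mathfrak p}[1]$; the Kodaira--Spencer isomorphism $\omega\otimes\omega_D(-2D)\simeq\Omega^1_{X/A_\mathfrak p}$ underlying Proposition~\ref{prop:KS&Serre} rewrites this as $\omega^{1-k}\otimes\omega_D(-2D)[1]$. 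Thus after cancelling the shift, both $\mathbf D(T^{\operatorname{naive}}_\mathfrak p)$ and $\pi_{1-k}^{-1}\otimes\operatorname{tr}_{p_2}\otimes\pi^D_1$ live as morphisms of line bundles $p_1^*(\omega^{1-k}\otimes\omega_D(-2D))\to p_2^!(\omega^{1-k}\otimes\omega_D(-2D))$.

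Next I would dualize factor by factor. The trace piece $\operatorname{tr}_{p_1}\colon\mathcal O_{X_0(\mathfrak p)}\to p_1^!\mathcal O_X$ is the unit of the adjunction $p_{1,*}\dashv p_1^!$ on $\mathcal O_X$; under Grothendieck duality for the correspondence the pushforward leg swaps from $p_1$ to $p_2$, so this dualizes formally to the unit $\operatorname{tr}_{p_2}\colon \mathcal O_{X_0(\mathfrak p)}\to p_2^!\mathcal O_X$. The line-bundle morphism $\pi_k\colon p_2^*\omega^k\to p_1^*\omega^k$, an isomorphism on the dense open where $\pi$ is separable, dualizes to its transpose $\pi_k^{-1}$, a rational morphism in the same direction as the Serre-dual correspondence.

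The heart of the argument will be to identify $\pi_k^{-1}$, under the KS identification of $\mathbf D\omega^k$, with $\pi_{1-k}^{-1}\otimes\pi^D_1$. Writing $\omega^{1-k}\otimes\omega_D(-2D)=\omega^{-k}\otimes(\omega\otimes\omega_D(-2D))$ and applying KS to the second factor, this reduces to the statement that $\pi_1\otimes(\pi^D_1)^{-1}$ on $\omega\otimes\omega_D$ realizes, through $\operatorname{KS}^\vee$ and the trace maps, the identity on $\omega_{X_0(\mathfrak p)/A_\mathfrak p}(D_0(\mathfrak p))$. This is precisely the content of the commutative square of Proposition~\ref{prop:KS&Serre}; inverting yields the required factorization of $\pi_k^{-1}$.

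Since everything consists of morphisms of line bundles, it is enough to verify the equality on the dense open locus where $\pi_k$, $\pi_{1-k}$, and the Kodaira--Spencer map are simultaneously isomorphisms, and there the identity follows directly from the KS diagram. The main obstacle will be the meticulous bookkeeping of the Grothendieck-duality, Kodaira--Spencer, and dualizing-sheaf identifications (including the cohomological shift), together with the direction conventions for $\pi_k$, $\pi_{1-k}^{-1}$, and $\pi^D_1$; once these are unwound, the identification becomes essentially immediate from Proposition~\ref{prop:KS&Serre}.
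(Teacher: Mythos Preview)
Your approach is essentially the paper's: both start from the decomposition $T^{\naive}_\mathfrak p=\pi_k\otimes{\rm tr}_{p_1}$, dualize, and use Proposition~\ref{prop:KS&Serre} as the key input. One point of your outline needs sharpening, however. The assertion that ${\rm tr}_{p_1}$ ``dualizes formally to ${\rm tr}_{p_2}$'' is not a formal fact about Grothendieck duality of correspondences. In the paper's computation the dual of the factorization $p_2^*\omega^k\to p_1^*\omega^k\to p_1^!\omega^k$ is
\[
p_1^*(\omega^{-k}\otimes\Omega^1_{X/A_\mathfrak p})\longrightarrow p_1^!(\omega^{-k}\otimes\Omega^1_{X/A_\mathfrak p})\longrightarrow p_2^!(\omega^{-k}\otimes\Omega^1_{X/A_\mathfrak p}),
\]
where the \emph{first} arrow is still $1\otimes{\rm tr}_{p_1}$ and the second is $\pi_{-k}^{-1}\otimes{\rm id}_{\omega_{X_0(\mathfrak p)/A_\mathfrak p}}$. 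The swap to ${\rm tr}_{p_2}$ only occurs when one rewrites the identity of $\omega_{X_0(\mathfrak p)/A_\mathfrak p}$ under the two projection-formula identifications $p_1^!\mathcal O_X\otimes p_1^*\Omega^1_{X/A_\mathfrak p}\cong\omega_{X_0(\mathfrak p)/A_\mathfrak p}\cong p_2^!\mathcal O_X\otimes p_2^*\Omega^1_{X/A_\mathfrak p}$; this is precisely where Proposition~\ref{prop:KS&Serre} enters, yielding ${\rm tr}_{p_2}\circ{\rm tr}_{p_1}^{-1}\otimes\pi_1^{-1}\otimes\pi^D_1$, and the ${\rm tr}_{p_1}$ then cancels. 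So the trace swap and the Kodaira--Spencer identification you describe as separate steps are in fact a single application of Proposition~\ref{prop:KS&Serre}, not two independent simplifications. Also, your ``$\pi_k^{-1}$'' should be $\pi_{-k}^{-1}$, a map on $\omega^{-k}$ rather than $\omega^k$.
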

% {\color{red} what is it really $\pi^{-1}$? it is a rational morphism?}{\color{blue} Yes, furthermore, in BP they usually write the $\pi_k's$ with dash arrows ($\pi_k$ is an iso over the generic fiber). Note that $\pi_k$ is when we pass to the Lie algebras ($\pi$ without index is the isogeny), so this amount to say that the isogeny is etale over the generic fiber.}
\begin{proof}
     The dual of the morphism $T_\mathfrak p^{\naive}:p_2^*\omega^k \to p_1^*\omega^k\to p_1^!\omega^k$ is
    \[{\bf D}(T_\mathfrak p^{\naive})\colon p_1^*(\omega^{-k}\otimes \Omega^1_{X/A_\fp})\to p_1^!(\omega^{-k}\otimes \Omega^1_{X/A_\fp}) \to p_2^!(\omega^{-k}\otimes \Omega^1_{X/A_\fp}).\]
    From Serre duality, we know that the first morphism is identified with ${1\otimes{\rm tr}_{p_1}}: p_1^{\ast}\omega^{-k}\rightarrow p_1^{!}\omega^{-k}=p_1^*\omega^{-k}\otimes p_1^!\mathcal O_X$. From the projection formula we have that the second morphism  can be identified with the rational map
    \[\pi_{-k}^{-1}\otimes 1 \colon p_1^*\omega^{-k}\otimes\omega_{X_0(\mathfrak p)/A_\mathfrak p}\to  p_2^*\omega^{-k}\otimes\omega_{X_0(\mathfrak p)/A_\mathfrak p}.\]
    On the other hand, using again the projection formula and Kodaira--Spencer we have 
    \[\omega_{X_0(\mathfrak p)/A_\mathfrak p}\cong  p_1^!\mathcal{O}_X\otimes p_1^*\Omega_{X/A_\mathfrak p}^1\cong p_1^!\mathcal{O}_X\otimes p_1^*\omega\otimes  p_1^*\omega_D (-2D)\]
    and 
    \[\omega_{X_0(\mathfrak p)/A_\mathfrak p}\cong p_2^!\mathcal{O}_X\otimes p_2^*\Omega^1_{X/A_\mathfrak p}\cong p_2^!\mathcal{O}_X\otimes p_2^*\omega\otimes p_2^*\omega_D (-2D).\]
    Now looking at Proposition \ref{prop:KS&Serre} and using the identification above, we have that the identity morphism $\omega_{X_0(\mathfrak p)/A_\mathfrak p}\rightarrow \omega_{X_0(\mathfrak p)/A_\mathfrak p}$ corresponds to 
    \[{\rm tr}_{p_2}\circ{\rm tr}_{p_1}^{-1}\otimes \pi_1^{-1}\otimes \pi^D_1: p_1^!\mathcal{O}_X\otimes p_1^*\omega\otimes  p_1^*\omega_D (-2D)\rightarrow  p_2^!\mathcal{O}_X\otimes p_2^*\omega\otimes p_2^*\omega_D (-2D)\]
    We finally get that 
    \[{\bf D}(T_\mathfrak p^{\naive})=\pi_{-k}^{-1}\otimes {\rm tr}_{p_2}\circ{\rm tr}_{p_1}^{-1}\circ {\rm tr}_{p_1}\otimes \pi_1^{-1}\otimes \pi^D_1 =\pi_{1-k}^{-1}\otimes {\rm tr}_{p_2}\otimes \pi^D_1.\]
\end{proof}
% Indeed, we have an isomorphism \cite[Corollary 4.2]{Ha2020dual}
% \[(KS^{\lor})^{q-1}\colon \omega^{q-1} \otimes \omega^{q-1} \to \Omega^1_{X/A_\fp}(2\operatorname{Cusp})^{q-1}.\]
\begin{rmk}
In contrast to what happens in the elliptic curve case, where there is a relation between $\Omega^1_{X/A_\fp}$ and $\omega$, in the Drinfeld modular curve we also need to consider its dual Drinfeld module. This is due to the lack of self-duality of Drinfeld modules, see \cite[Remark 2.20]{Ha2020dual}.
\end{rmk}

\section{The correspondence on the special fiber}\label{subsec:corrmodp} We need the following local analysis of $T_\mathfrak p$ on the supersingular locus of $X_1$, that we denote by ${\rm SS}$. Roughly speaking, this means that the correspondence $T_\mathfrak p$ reduces the order of the poles on the supersingular locus for $k \geq 2$, and for $k \leq 0$ it decreases the order of vanishing.

Recall that the Hasse invariant $\mathrm{Ha}$ is $\operatorname{Lie}(V_{\mathfrak p,\cE})^{\vee}\colon \omega\to  \omega^{q^d}$ and it is non-vanishing at a Drinfeld module $E$ if and only if $E$ is ordinary if and only if $\operatorname{ker} V$ is \'etale over $S$ \cite[Proposition 2.14]{Shas2007}. When we are in the case of the universal Drinfeld module $\mathcal E_1$ over $Y_1$, we will also consider the Hasse invariant as an element in $H^0(Y_1,\omega^{q^d-1})= {\rm Hom}(\mathcal O_{Y_1},\omega^{q^d-1})={\rm Hom}(\omega,
\omega^{q^d})$. This can be further extended to $X_1$, and its vanishing locus is reduced \cite[Corollary 2.16]{NiRo2021}.

\begin{prop} For all $k\geq 2$ and $n\in \mathbb{Z}$, the correspondence $T_\mathfrak p$ over the special fiber induces a map:
    \[p_2^*(\omega^k((nq^d+k-2)\rm{SS})\to p_1^!(\omega^k(n{\rm SS})).\]
  For all $k\leq 0$ and $n\in \mathbb{Z}$, the correspondence $T_\mathfrak p$ over the special fiber induces a map:
  \[p_2^*(\omega^k(-n{\rm SS}))\to p_1^!(\omega^k((-nq^d+k){\rm SS})).\]
\end{prop}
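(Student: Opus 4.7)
Since the divisors in the claim are supported on the finite supersingular locus $\mathrm{SS} \subset X_1$, the verification is local at each $s \in \mathrm{SS}$. Let $y$ denote the unique preimage of $s$ in $X_0(\mathfrak{p})_1$; by Section \ref{subsec:specialfib}, $y$ lies on both components $X_0(\mathfrak{p})^{F}_1$ and $X_0(\mathfrak{p})^{V}_1$. After passing to the normalization, it suffices to verify the divisor inequality on each branch separately, and I will do so by combining the local behavior of $\pi_k$ and of ${\rm tr}_{p_1}$ at $y$.

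On the F-branch at $y$, Proposition \ref{prop:corr-special} identifies $\pi$ with the Frobenius $F$; moreover $p_1$ is étale and $p_2$ is totally ramified of degree $q^d$, so $p_1^{\ast}\mathrm{SS} = \{y\}$ and $p_2^{\ast}\mathrm{SS} = q^d\{y\}$, and by Corollary \ref{cor:traceord} the trace ${\rm tr}_{p_1}$ is an isomorphism at $y$. Writing $\mathrm{Ha} = \operatorname{Lie}(V)^{\vee}$ for the Hasse invariant (vanishing simply along $\mathrm{SS}$), the relation $FV = [\varpi]$ on Lie algebras dualizes, under the standard identifications, to the identity $\pi_1 \circ p_1^{\ast}\mathrm{Ha} = \varpi$ on the F-branch; hence $\pi_k = \varpi^{k}(p_1^{\ast}\mathrm{Ha})^{-k}$ up to units, i.e., $\varpi^k$ times a rational section with pole of order exactly $k$ at $y$. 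Symmetrically on the V-branch, where $\pi = V$, $p_2$ is étale and $p_1$ is totally ramified of degree $q^d$: one has $\pi_1 = p_2^{\ast}\mathrm{Ha}$ (so $\pi_k$ vanishes to order $k$ at $y$), and ${\rm tr}_{p_1} = \varpi \cdot (\mathrm{unit})$ at $y$ — the formula of Corollary \ref{cor:traceord} extends to the supersingular point since its proof only uses the linear term $\varpi X$ of the Carlitz polynomial, valid throughout the formal neighborhood.

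Now assemble $T_\mathfrak{p} = \varpi^{-\min(1,k)} \pi_k \otimes {\rm tr}_{p_1}$. For $k \geq 2$, the F-branch contribution carries a factor $\varpi^{k-1}$ and so vanishes modulo $\varpi$, making the claimed extension on that branch the zero map (trivially defined). On the V-branch, the $\varpi$ in ${\rm tr}_{p_1}$ cancels the $\varpi^{-1}$ normalization, yielding a morphism with $\ord_y(T_\mathfrak{p}|_V) \geq k$; translated into the divisor language, the claimed map exists iff $\ord_y(T_\mathfrak{p}|_V) \geq k - 2$, which holds. For $k \leq 0$ the roles swap: on the V-branch the factor $\varpi^{1-k}$ kills $T_\mathfrak{p}$ modulo $\varpi$, and on the F-branch $\varpi^{-k}$ absorbs the $\varpi^{k}$ of $\pi_k$, producing $T_\mathfrak{p}|_F = (p_1^{\ast}\mathrm{Ha})^{-k}$ up to units, with a zero of order exactly $-k$ at $y$ — matching the required inequality with equality.

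The main technical point is justifying the identities $\pi_1 \circ p_1^{\ast}\mathrm{Ha} = \varpi$ on F and $\pi_1 = p_2^{\ast}\mathrm{Ha}$ on V over the mixed-characteristic integral model $X_0(\mathfrak{p})$, since $F$ and $V$ are intrinsically characteristic-$p$ constructions. These follow by combining their validity on the ordinary locus — implicit in the proof of Proposition \ref{prop:Tpint} via \eqref{eq:TpFrob} — with the local deformation-theoretic description at $y$ furnished by Serre--Tate theory (Section \ref{s:serre-tate}), which propagates the identities to the entire formal neighborhood of $y$.
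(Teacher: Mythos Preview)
Your overall strategy---split into the $F$- and $V$-components, show one contribution vanishes modulo $\varpi$, and control the other via the Hasse invariant---is the same as the paper's. The problem is in how you justify the local computations at the supersingular points.

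You invoke Corollary~\ref{cor:traceord} and ``Serre--Tate theory (Section~\ref{s:serre-tate})'' to assert that ${\rm tr}_{p_1}=\varpi\cdot(\text{unit})$ at a supersingular $y$ on the $V$-branch, and that $\pi_1\circ p_1^*\mathrm{Ha}=\varpi$ on the $F$-branch over the integral model. But Proposition~\ref{prop:ordlocal} and Corollary~\ref{cor:traceord} are stated and proved \emph{only} for ordinary points; the Serre--Tate coordinates of Section~\ref{s:serre-tate} are precisely an ordinary-locus phenomenon and say nothing about the formal neighbourhood of a supersingular point, where the deformation ring of $E_0[\fp^\infty]$ has a completely different (Lubin--Tate) description. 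Moreover, the identities you write involving $\varpi$ live on the integral model $X_0(\fp)$, where there is no decomposition into $F$- and $V$-branches---that decomposition exists only on the special fibre, where $\varpi=0$. So the statements ``$\pi_1\circ p_1^*\mathrm{Ha}=\varpi$ on $F$'' and ``${\rm tr}_{p_1}=\varpi\cdot(\text{unit})$ on $V$ at $y$'' are not well-posed as written, and your proposed justification does not apply.

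The paper sidesteps this entirely. It never computes the order of $T_\fp$ at a supersingular point. Instead it uses only that $T_\fp$ is already a morphism of sheaves (Proposition~\ref{prop:Tpint}, proved by a codimension-$2$ argument that avoids the supersingular points), restricts to the relevant component, and then argues with two ingredients available globally on that component: the divisor identity $(p_1^V)^*\mathrm{SS}=q^d(p_2^V)^*\mathrm{SS}$ (resp.\ $(p_2^F)^*\mathrm{SS}=q^d(p_1^F)^*\mathrm{SS}$), and the isomorphism $(p_2^V)^*(\omega(\mathrm{SS}))\cong(p_1^V)^*\omega$ coming from the Hasse invariant. Concretely, for $k\ge 2$ one writes $T_\fp^{(k)}|_V=T_\fp^{(2)}|_V\otimes\pi_{k-2}$, uses that $T_\fp^{(2)}|_V$ is a map (order $\ge 0$ everywhere), and identifies $\pi_{k-2}=(p_2^*\mathrm{Ha})^{k-2}$. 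This yields exactly the bound $k-2$ you need, without ever touching the local structure of $X_0(\fp)$ at a supersingular point. Your argument can be repaired by replacing the unjustified claim ``${\rm tr}_{p_1}=\varpi\cdot(\text{unit})$ at $y$'' with ``$T_\fp^{(2)}|_V$ is regular at $y$, by Proposition~\ref{prop:Tpint}''.
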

\begin{proof}
    We first prove the case $k\geq 2$. In this case, the support of the correspondence $T_\mathfrak p$ is $X_0(\mathfrak p)_1^V$ because of equation (\ref{e:T_p in F}). Now, the map $p_1^V$ is totally ramified of degree $q^d$ and $p_2^V$ is an isomorphism. It follows that we have the following equality of divisors: \[(p_1^V)^*({\rm SS})=q^d(p_2^V)^*({\rm SS}).\]  
   Thus, the map $(p_2^V)^*\omega^2 \to (p_1^V)^!\omega^2$, the cohomological correspondence $T_\mathfrak p$ restricted to $X_0(\mathfrak p)_1^V$, induces a morphism 
   \[(p_2^V)^*\omega^2(nq^d{\rm SS})\to (p_1^V)^!\omega^2(n{\rm SS}).\] This proves the claim for $k=2$. For $k\geq 3$, we first remark that the correspondence is the tensor product of the map $(p_2^V)^*\omega^2\to(p_1^V)^!\omega^2$ and the map $(p_2^V)^*\omega^{k-2}\to (p_1^V)^*\omega^{k-2}$. But $(p_1^V)^*\omega\cong ((p_2^V)^*\omega)^{q^d}$ and $\mathrm{Ha}$ is defined in terms of the universal isogeny, then we have an isomorphism 
    \[(p_2^V)^*(\omega({\rm SS}))\xrightarrow{(p_2^V)^*\operatorname{Ha}}((p_2^V)^*\omega)^{q^d}\cong  (p_1^V)^*\omega,\] 
    and therefore, for $k\geq 2$, we have that the map $(p_2^V)^*(\omega^{k-2}((k-2){\rm SS}))\to (p_1^V)^*\omega^{k-2}$ induces an isomorphism. We deduce  that there is a map $p_2^*(\omega^k((nq^d+k-2){\rm SS}))\to p_1^!(\omega^k(n{\rm SS}))$.

    Now we prove the case $k\leq 0$. The correspondence is supported on $X_0(\mathfrak p)_1^F$. Indeed, again this follow from the proof of Proposition \ref{prop:Tpint}, more precisely from equation \eqref{eq:TpFrob}, $p_1^F$ is an isomorphism and the definition of the normalization (Section \ref{subsec:corrTp}). Now, the map $p_2^F$ is totally ramified of degree $q^d$ and the map $p_1^F$ is an isomorphism. It follows that we have an equality of divisors
\[(p_2^F)^*({\rm SS})=q^d(p_1^F)^*({\rm SS}).\] We deduce that  the  \[(p_2^F)^*\mathcal{O}_X\to (p_1^F)^!\mathcal{O}_X\] induces a morphism
    $(p_2^F)^*(\mathcal{O}_X(-n{\rm SS}))\to (p_1^!)(\mathcal{O}_X(-nq^d{\rm SS})).$
    This proves the case $k=0$. For $k\leq -1$, we remark that the correspondence is the tensor product of the map $(p_2^F)^*\mathcal{O}_X\to (p_1^F)^!\mathcal{O}_X$ and a map $(p_2^F)^*\omega^k\to (p_2^F)^*\omega^k$ that is deduced from the differential of the Verschiebung map $E^{(\mathfrak p)}\to E$.  We observe that $(p_2^F)^*\omega\cong ((p_1^F)^*\omega)^{q^d}$ and there is a natural isomorphism
    \[(p_2^F)^*\omega \cong ((p_1^F)^*\omega)^{q^d} \xrightarrow{(p_1^F)^*\operatorname{Ha}^{-1}} (p_1^F)^*(\omega({\rm SS})).\]
    Therefore, for all $k\leq 0$, an isomorphism $(p_2^F)^*\omega^k \to (p_1^F)^*(\omega^k(-k{\rm SS})),$ which factors the map
    \[(p_2^F)^*\omega^k \xrightarrow{(p_1^F)^*\operatorname{Ha}^k}(p_1^F)^*\omega^k.\]
    We deduce that there is a map $p_2^*(\omega^k(-n{\rm SS}))\to p_1^!(\omega^k((q^d+k){\rm SS})).$
\end{proof}

Now we use the formalism of six operations that naturally appears in condensed mathematics. This is done by looking at the derived category of quasi-coherent  $\mathcal O_{X^{\rm ord}_1}$-modules $D(\mathcal O_{X_1^{\rm ord}})$ as the discrete object of solid $\mathcal O_X$-modules $D(\mathcal O_{X^{\rm ord}_1,\blacksquare})$. In the latter, we have a six-functor formalism \cite{ClSc2019}.  In particular there is $q_{1,!}$, where $q_1\colon X_1^{\ord}\to \operatorname{Spec}(k(\mathfrak p))$ is the structure map of the ordinary locus. We put $\operatorname{H}^1_c(X^{\ord}_1,\omega^{k})\coloneqq \operatorname{H}^1(q_{1,!}\omega^{k})$ where $q_{1,!}\omega^{k}$ is viewed as a complex of $k(\mathfrak p)$-modules. Remark that via \cite[Prop. 2.3.5]{BoPi2023} this $k(\mathfrak p)$-vector space is profinite. Moreover, this definition is exactly the same as used in \cite[\S4.2.2]{BoPi2022}, where the authors used a more classical approach, without using condensed mathematics. In particular, we have $\operatorname{H}^1_c(X^{\ord}_1,\omega^k)=\lim_n \operatorname{H}^1_c(X_1,\omega^k(-n\rm{SS}))$.

Recall that  a continuous endomorphism $T\colon M\to M$ over a finite Artinian ring $R$ of a profinite $R$-module $M$ is \textit{locally finite} if $M$ has a basis of neighborhoods of zero consisting of submodules $\{N_i\}$ such that $T(N_i) \subseteq N_i$. This notion was introduced in Definition 2.9 of \cite{BoPi2022}. Now, as in Section 4 of [\emph{loc.\,cit.}], we have:
\begin{cor} For every $i\geq 0$, we have the following
    \begin{enumerate}
        \item if $k\geq 2$, $T_\mathfrak p$ is locally finite on $\operatorname{H}^i(X_1^{\ord},\omega^k)$,
        \item if $k\leq 0$, $T_\mathfrak p$ is locally finite on $\operatorname{H}^i_c(X_1^{\ord},\omega^k)$.
    \end{enumerate}
\end{cor}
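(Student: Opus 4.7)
My plan is to derive both assertions directly from the preceding proposition, by filtering the cohomology of the ordinary locus according to pole/vanishing order at $\mathrm{SS}$ and tracking how $T_\fp$ moves between the levels. This follows the strategy of \cite[\S4]{BoPi2022}.

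For (1), I would use the identification $\operatorname{H}^i(X_1^{\ord}, \omega^k) = \operatorname{colim}_M H_M$ with $H_M := \operatorname{H}^i(X_1, \omega^k(M\,\mathrm{SS}))$, each term finite-dimensional over $k(\fp)$ by properness of $X_1$. The proposition supplies $T_\fp\colon H_{nq^d + k - 2} \to H_n$, so a class lifted from level $M$ is carried to level $\lceil (M - k + 2)/q^d\rceil$, and iterating this contraction eventually stabilizes at some bound $M_0 = M_0(k)$ depending only on $k$. Consequently the orbit of any class under $k(\fp)[T_\fp]$ lies inside the image in the colimit of a single finite-dimensional $H_M$, which is precisely the local-finiteness condition of \cite[Definition 2.9]{BoPi2022}.

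For (2), the space $\operatorname{H}^i_c(X_1^{\ord}, \omega^k) = \lim_n V_n$ with $V_n := \operatorname{H}^i(X_1, \omega^k(-n\,\mathrm{SS}))$ is profinite over $k(\fp)$, and a basis of open neighborhoods of zero is given by the kernels $K_n$ of the projections to $V_n$. The proposition provides $T_\fp\colon V_n \to V_{nq^d - k}$, and since $k \leq 0$ and $n \geq 0$ we have $nq^d - k \geq n$, so composing with the natural transition $V_{nq^d - k} \to V_n$ yields an endomorphism of $V_n$ compatible with the inverse system. From this compatibility I can read off $T_\fp(K_n) \subseteq K_n$ directly, producing the required basis of $T_\fp$-stable neighborhoods of zero.

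The main technical point requiring care, in both parts, is verifying that the level-wise operators genuinely assemble into an endomorphism of the relevant ind- or pro-system, i.e., that they commute with the transition maps coming from the inclusions $\omega^k(\pm(n{\pm}1)\mathrm{SS}) \hookrightarrow \omega^k(\pm n\,\mathrm{SS})$. This is where the care is needed, but it ultimately reduces to naturality of the cohomological correspondence $T_\fp\colon p_2^*\mathcal{F} \to p_1^!\mathcal{F}$ in the sheaf $\mathcal{F}$, which is built into the construction recalled in \S\ref{subsec:corrTp}.
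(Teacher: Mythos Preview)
Your proposal is correct and follows exactly the approach the paper intends: the paper gives no proof and simply writes ``as in Section 4 of [\emph{loc.\,cit.}]'', i.e.\ \cite[\S4]{BoPi2022}, which is precisely the filtration-by-pole-order argument you spell out. Your identification of the one genuine technical point---compatibility of the level-wise operators with the transition maps, reducing to naturality of the correspondence in the coefficient sheaf---is also the right emphasis.
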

Thanks to this corollary, we can define the projector $e(T_\mathfrak p)$ as in [\emph{loc.\,cit.}]. We have the following classicality results for the ordinary projection of modular forms modulo $\varpi$:

\begin{cor}\label{cor:descent} For every $i\geq 1$ we have that:
    \begin{enumerate}
        \item $e(T_\mathfrak p)\operatorname{H}^i(X^{\ord}_1,\omega^k)=\begin{cases}
            e(T_\mathfrak p)\operatorname{H}^i(X_1,\omega^k) &\text{ if }k\geq 3, \\
            e(T_\mathfrak p)\operatorname{H}^i(X_1,\omega^2({\rm SS})) &\text{ if }k=2.
        \end{cases}$
        \item $e(T_\mathfrak p)\operatorname{H}^i_c(X^{\ord}_1,\omega^k)=\begin{cases}
            e(T_\mathfrak p)\operatorname{H}^i(X_1,\omega^k) &\text{ if }k\leq -1, \\
            e(T_\mathfrak p)\operatorname{H}^i(X_1,\mathcal{O}_X(-{\rm SS})) &\text{ if }k=0.
        \end{cases}$
    \end{enumerate}
\end{cor}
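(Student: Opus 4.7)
The plan is to reduce each case to the vanishing of $e(T_\mathfrak p)$ on the degree zero cohomology of a skyscraper sheaf supported on the supersingular locus $\mathrm{SS}$, and then to establish that vanishing by showing $T_\mathfrak p$ is nilpotent on this finite-dimensional vector space, using the pole/vanishing improvement in the previous proposition. First note that $\mathrm{SS}$ is a reduced Cartier divisor on $X_1$, so
\[\operatorname{H}^i(X_1^{\ord},\omega^k)=\operatorname{colim}_n \operatorname{H}^i(X_1,\omega^k(n\,\mathrm{SS})),\qquad \operatorname{H}^1_c(X_1^{\ord},\omega^k)=\lim_n \operatorname{H}^1(X_1,\omega^k(-n\,\mathrm{SS})).\]

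For (1) with $k\geq 3$, the short exact sequence $0\to \omega^k\to\omega^k(n\,\mathrm{SS})\to Q_n\to 0$, where $Q_n$ is a skyscraper with $\operatorname{H}^j(X_1,Q_n)=0$ for $j\geq 1$, produces the six-term long exact sequence
\[0\to \operatorname{H}^0(X_1,\omega^k)\to \operatorname{H}^0(X_1,\omega^k(n\,\mathrm{SS}))\to \operatorname{H}^0(X_1,Q_n)\to \operatorname{H}^1(X_1,\omega^k)\to \operatorname{H}^1(X_1,\omega^k(n\,\mathrm{SS}))\to 0.\]
Since $e(T_\mathfrak p)$ is a projector it preserves exactness, so it is enough to prove $e(T_\mathfrak p)\operatorname{H}^0(X_1,Q_n)=0$. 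Filter $Q_n$ by the images of $\omega^k(j\,\mathrm{SS})$ for $0\leq j\leq n$. By the previous proposition, $T_\mathfrak p$ sends $\omega^k(j\,\mathrm{SS})$ into $\omega^k(j'\,\mathrm{SS})$ with $j'=\lceil (j-k+2)/q^d\rceil$, and for $k\geq 3$ and $j\geq 1$ a short check gives $j'<j$. Hence $T_\mathfrak p^n=0$ on $\operatorname{H}^0(X_1,Q_n)$, so $e(T_\mathfrak p)$ vanishes there; passing to the colimit in $n$ yields the formula for $k\geq 3$. For $k=2$ the strict inequality $j'<j$ only holds for $j\geq 2$, so one uses the variant sequence $0\to \omega^2(\mathrm{SS})\to\omega^2(n\,\mathrm{SS})\to R_n\to 0$ whose skyscraper quotient $R_n$ is filtered starting at $j=1$ with trivial first step; the same nilpotence argument then applies and yields the second case.

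For (2) with $k\leq -1$, the short exact sequence $0\to \omega^k(-n\,\mathrm{SS})\to \omega^k\to Q'_n\to 0$ gives an analogous six-term sequence, reducing the problem to $e(T_\mathfrak p)\operatorname{H}^0(X_1,Q'_n)=0$. The previous proposition shows that $T_\mathfrak p$ sends $\omega^k(-j\,\mathrm{SS})$ into $\omega^k(-j'\,\mathrm{SS})$ with $j'=jq^d-k$, and $j'>j$ for every $j\geq 0$ when $k\leq -1$. Hence $T_\mathfrak p$ strictly raises the natural filtration on $Q'_n$, is nilpotent on its $\operatorname{H}^0$, and $e(T_\mathfrak p)$ annihilates it. Applying $e(T_\mathfrak p)$ to the long exact sequence gives canonical isomorphisms $e(T_\mathfrak p)\operatorname{H}^1(X_1,\omega^k(-n\,\mathrm{SS}))\cong e(T_\mathfrak p)\operatorname{H}^1(X_1,\omega^k)$ for every $n$; the inverse system is then essentially constant and its limit is $e(T_\mathfrak p)\operatorname{H}^1(X_1,\omega^k)$. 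For $k=0$ the strict inequality $j'>j$ fails at $j=0$, so one replaces $\mathcal O_X$ by $\mathcal O_X(-\mathrm{SS})$ in the short exact sequence above and obtains the second formula.

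The main subtlety I expect lies in the two boundary cases $k=2$ and $k=0$, where the filtration argument degenerates at its very first step and forces a shift by one copy of $\mathrm{SS}$ in the short exact sequence; the rest is formal manipulation of long exact sequences together with the fact that $e(T_\mathfrak p)$ is a projector commuting with finite direct sum decompositions and with the colimit or limit defining the cohomology of the ordinary locus.
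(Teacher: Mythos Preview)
Your proposal is correct and follows precisely the approach the paper has in mind: the paper does not give an independent argument but simply refers to Section~4 of \cite{BoPi2022}, and what you have written is exactly the proof one finds there, transported to the Drinfeld setting via the pole--reduction proposition just above. The filtration argument on the skyscraper quotients, the nilpotence of $T_\mathfrak p$ on them, and the resulting handling of the boundary cases $k=2$ and $k=0$ all match. One small point worth making explicit is that $T_\mathfrak p$ genuinely acts on each term of the short exact sequences you write down (because the proposition shows it preserves $\omega^k(m\,\mathrm{SS})$ for every relevant $m$), so the long exact sequence is $T_\mathfrak p$-equivariant and applying the idempotent $e(T_\mathfrak p)$ does preserve exactness; you assert this but it deserves one sentence of justification.
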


A similar result holds for generic characteristic:

\begin{cor}\label{cor:classicTp}
    The corestriction map induces the following isomorphisms
    \begin{enumerate}
        \item For $k\geq 3$ we have an isomorphism $e(T_\mathfrak p)\operatorname{H}^0(\mathfrak X^{\ord},\omega^k)\cong e(T_\mathfrak p)H^0(X,\omega^k)$
        \item For each $k\leq -1$ we have an isomorphism   \[e(T_\mathfrak p)\operatorname{H}^1_c(\mathfrak X^{\ord},\omega^{2-k}(-2D))\cong e(T_\mathfrak p)\operatorname{H}^1(X,\omega^{1-k}\otimes\omega_D(-2D)). \]
    \end{enumerate}
\end{cor}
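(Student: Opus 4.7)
The plan is to lift Corollary \ref{cor:descent} from the special fibre to the formal scheme via a $\varpi$-adic devissage, and then to identify the formal cohomology with the algebraic cohomology on $X$ via formal GAGA. The exactness of the projector $e(T_\mathfrak p)$, the vanishing $\operatorname{H}^1(X_1^{\ord}, -) = 0$ for coherent sheaves (since $X_1^{\ord}$ is affine, being the complement of finitely many points in a proper curve over a field), and the Kodaira--Spencer isomorphism of \S\ref{subsec:Serredual} will be the main technical ingredients.

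For part (1), I would proceed as follows. Since $X \to \operatorname{Spec}(A_\mathfrak p)$ is proper and $\omega^k$ is coherent, formal GAGA yields $\operatorname{H}^0(X, \omega^k) = \lim_n \operatorname{H}^0(X_n, \omega^k)$, while by construction $\operatorname{H}^0(\mathfrak X^{\ord}, \omega^k) = \lim_n \operatorname{H}^0(X_n^{\ord}, \omega^k)$. It thus suffices to show that the restriction $e(T_\mathfrak p) \operatorname{H}^0(X_n, \omega^k) \to e(T_\mathfrak p) \operatorname{H}^0(X_n^{\ord}, \omega^k)$ is an isomorphism for every $n$, after which one takes the limit (Mittag--Leffler holds because the transition maps are surjective). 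I would establish this by induction on $n$: the base case is Corollary \ref{cor:descent}(1), which together with $\operatorname{H}^1(X_1^{\ord}, \omega^k) = 0$ gives the auxiliary vanishing $e(T_\mathfrak p) \operatorname{H}^1(X_1, \omega^k) = 0$. For the inductive step, apply the long exact cohomology sequence of
\[0 \to \omega^k/\varpi \xrightarrow{\varpi^{n-1}} \omega^k/\varpi^n \to \omega^k/\varpi^{n-1} \to 0\]
on both $X_n$ and $X_n^{\ord}$ and apply the exact functor $e(T_\mathfrak p)$; the vanishing above reduces both long sequences to short exact sequences, and the five lemma applied to the restriction between them yields the induction.

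For part (2), the argument is formally analogous on the compactly supported side. The base case is Corollary \ref{cor:descent}(2), combined with the Kodaira--Spencer isomorphism $\omega\otimes\omega_D \cong \Omega^1_{X/A_\mathfrak p}(2D)$ from \S\ref{subsec:Serredual}, which matches the sheaves $\omega^{2-k}(-2D)$ and $\omega^{1-k}\otimes\omega_D(-2D)$ via Serre duality: each side is Serre-dual to an $\operatorname{H}^0$-type group, and the Hecke-equivariance of the identification is controlled by the computation of the dual of $T_\mathfrak p$ in Proposition \ref{prop:DualTp}. The inductive step repeats the $\varpi$-adic devissage for compactly supported cohomology, using the vanishing of $\operatorname{H}^2_c$ on the one-dimensional formal curve and the exactness of $e(T_\mathfrak p)$ to reduce again to short exact sequences; the same five-lemma argument produces the isomorphism at every finite level, and the inverse limit (together with the definition $\operatorname{H}^1_c(\mathfrak X^{\ord}, -) = \lim_n \operatorname{H}^1_c(X_n^{\ord}, -)$ in the condensed framework) concludes.

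The main obstacle will be verifying the compatibility of the Serre-duality comparison in part (2) with the Hecke operator $T_\mathfrak p$, which is necessary for the two appearances of $e(T_\mathfrak p)$ on both sides to correspond. This is addressed by combining Proposition \ref{prop:DualTp} (which identifies the Serre dual of $T_\mathfrak p^{\naive}$) with Proposition \ref{prop:KS&Serre} (which encodes the compatibility of the Kodaira--Spencer map with the correspondence), together with the normalization $\varpi^{-\min(1,k)}$ built into the definition of $T_\mathfrak p$. A secondary difficulty is the commutation of $e(T_\mathfrak p)$ with the inverse limit $\lim_n$ in the non-noetherian $\Lambda$-setting; this is handled by the local finiteness of $T_\mathfrak p$ established in the preceding section together with the solid-module framework.
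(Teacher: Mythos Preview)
Your approach is essentially the same as the paper's: reduce to Corollary~\ref{cor:descent} modulo $\varpi$, then lift by $\varpi$-adic d\'evissage using the vanishing of $\operatorname{H}^1$ on the affine ordinary locus and (on the proper side) Serre duality. The paper is terser---it simply notes that surjectivity of reduction modulo $\varpi$ follows from the long exact sequence of multiplication by $\varpi$ together with the relevant $\operatorname{H}^1$-vanishing, and then cites \cite{NiRo2021} and \cite{BrascaRosso} for the remaining details---whereas you spell out the induction on $n$ and the five-lemma comparison explicitly. Both are valid.

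Two small remarks. First, for part~(2) the paper takes a shorter route than you do: rather than repeating the d\'evissage on the compactly supported side, it deduces~(2) directly from~(1) via Serre duality and the isomorphism $\omega\cong\omega_D$ on $\mathfrak X^{\ord}$. Your direct argument on $\operatorname{H}^1_c$ works too, but the paper's shortcut avoids having to redo the inductive step. Second, your final paragraph about the ``non-noetherian $\Lambda$-setting'' is a red herring in this corollary: here the coefficient ring is just $A_\mathfrak p$, a complete DVR, so no genuine non-noetherian difficulty arises. The local finiteness of $T_\mathfrak p$ at each finite level follows by the same induction you use for the cohomology comparison (and the paper implicitly appeals to the congruence $T_\mathfrak p\equiv U_\mathfrak p\bmod\varpi$ from \cite{NiRo2021} to import this from the known $U_\mathfrak p$-theory).
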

\begin{proof}
\begin{enumerate}
        \item By Corollary \ref{cor:descent}, we know the results  modulo $\mathfrak p$. To deduce the result over $A_{\mathfrak p}$, we need to prove the surjectivity of reduction modulo $\mathfrak p$. This can be proven using the long exact sequence associated with the multiplication by $\varpi$ on $\omega^k$ (either on $X^{\ord}$ or $X$) and using the vanishing of $ \operatorname{H}^1$. For  $X^{\ord}$ this follows from it being affine, for $X$ one uses Serre duality, see Section \ref{subsec:Serredual}. Finally, by \cite[Th\'eor\`eme 3.15]{NiRo2021} $U_{\mathfrak p}$ and $T_{\mathfrak p}$ are congruent modulo $\varpi$ if $k>2$.
        One can use the the same proof as \cite[Theorem 4.3, 4.6 (3)]{BrascaRosso} using the results of \cite{NiRo2021}. 
        \item If follows from the previous point by Serre duality and the isomorphism $\omega\cong\omega_D$ in $\mathfrak X^{\rm ord}$.
       \end{enumerate}
 
\end{proof}

\section{\texorpdfstring{$\mathfrak p$-adic theory}{p-adic theory}}
The goal of this section is to give a construction of families of $\omega^k$, $k\in\mathbb Z$. For this, we review some of the basics around Taguchi duality. This allows us to consider the Igusa tower, which is the key to the construction of these families.

% Let $\mathfrak X$ be the completion along the closed subscheme $X_1$. In this situation we can define $\mathfrak X^{\ord}$ as $\operatorname{colim}_{n'}X_{n'}^{\ord}.$ It is  an  affine open subscheme of $\mathfrak X$. Working modulo $\mathfrak p^n$, we  have a canonical subgroup $H_{n}^{\can}$ of $E[\mathfrak p^n]$ over $\mathfrak X^{\ord}$ isomorphic to $\alpha_{p^n}=\operatorname{Spec}(A_\fp[T]/(T^{p^n}))$. This subgroup will be the lead to the construction of families of $\omega^k$.

\subsection{Canonical subgroups and Hodge--Tate--Taguchi maps}

We review some basics on canonical subgroups in the context of Drinfeld modules of rank two and the Taguchi duality. For this, we mainly follow the treatment in Section 3 of \cite{Ha2022cpt}.

%We denote by $Y^{\rm ord}$ the locus of the ordinary points of $Y$, the non-%compactified part of $X$

Let $Y^{\ord}_n$ be the ordinary locus of $Y_n= Y\times A/\fp^n$. We recall first that there exists a finite locally free closed $A$-submodule $H^{\can}_{n}$ over $X_n$ of $E[\mathfrak p]$ extending the canonical subgroup over $Y^{\ord}_n$ of the universal ordinary Drinfeld module of rank two $\mathcal{E}^{\ord}[\fp^n]$ modulo $\mathfrak p^m$ such that its Cartier--Taguchi dual is \'etale locally isomorphic to $(A/\fp^n)$ \cite[Lemma 4.9]{Ha2020dual}.

\begin{rmk}\label{rmk:tag-dual}
\begin{itemize}
    \item  In the notations of \cite[Lemma 4.9]{Ha2020dual}, the group is $H^{\rm can}_{n}$ is $\mathcal C_{n,m}$ with $n=m$, where the first $n$ express the level and the other the one indicates that it modulo $\mathfrak p^m$. 

\item In this positive characteristic context, we use the Cartier--Taguchi dual $(H^{\can}_{n})^D=\operatorname{Hom}(H^{\can}_{n},C)$ of $H^{\can}_{n,m}$,  where $C$ denotes the Carlitz(–Hayes) module. With this notion, one actually has that $(H_{n}^{\can})^D$ is \'etale, contrary to what happens if we use the classical Cartier dual, were we would get that its dual is isomorphic to a non-reduced algebra, a non-\'etale group.

\end{itemize}
\end{rmk}

The theory of Cartier--Taguchi duals leads to the Hodge--Tate--Taguchi map
\[\operatorname{HTT}\colon  (H^{\can}_{n})^D\to \omega_{H^{\can}_{n}}.\]
where $\omega_{H^{\can}_{n}}$ is the differential attached to $H^{\can}_{n}$. It sends $s\colon H^{\can}_{n}\to C$, to $s^*dz$, where $dz$ is the canonical generator of the co-lie algebra of $C$. Furthermore, modulo $\fp^n$, this induces an isomorphism \cite[Proposition 3.8 and 4.15]{Ha2020dual} %{\color{blue}(there might be some codimesion 2 argument to do)}
\[\operatorname{HTT}\otimes 1\colon (H_{n}^{\can})^D \otimes_{A_\fp/\fp^n}\mathcal O_{X^{\ord}_n}\to \omega/\fp^n.\]
Passing to the limit over $n$, we have the following isomorphism
\[\operatorname{HTT}\otimes 1\colon \lim(H^{\can}_{n})^D \otimes\mathcal O_{\mathfrak X^{\ord}}\to \omega.\]

\subsection{Universal weights} 
We note that $\lim(H_{n}^{\can})^D$ is a pro-\'etale sheaf, pro-\'etale locally isomorphic to $A_\fp$. We denote it by $T_\fp((H^{\can})^D)$.

We define the Igusa tower as the torsor of ``frames'' of $T_\fp((H^{\can})^D)$:
\[\operatorname{Ig}=\operatorname{Isom}_{\mathfrak X^{\ord}}(A_\fp,T_\fp((H^{\can})^D))\to \mathfrak X^{\ord}.\]

It is an $A_\fp^\times$-torsor and via the Hodge--Tate--Taguchi map we can get
\[\omega\cong T_\fp((H^{\can})^D)\otimes\mathcal O_{\mathfrak X^{\ord}}\cong(\mathcal{O}_{\operatorname{Ig}}\widehat\otimes A_\fp )^{A_\fp^\times},\]
where the action is taken diagonally and the action on $A_\fp$ is by multiplication.

This relation allows us to construct families of $\omega^k$, $k\in \mathbb{Z}$. Indeed, let $\kappa\colon A_\fp^\times \to R^\times$ be a continuous homomorphism, where $R$ is a linearly topologized ring over $A_\fp$. Then 
\[\omega^\kappa\coloneqq(\mathcal{O}_{\operatorname{Ig}}\widehat \otimes R)^{A_\fp^\times},\]
where the action is taken diagonally, with the action  on $R$ given by $\kappa$ and the tensor product is the completed one \cite[\href{https://stacks.math.columbia.edu/tag/0AMQ}{Tag 0AMQ}]{stacks-project}.

This construction satisfies that $\omega^\kappa$ is an invertible sheaf over $\mathcal{O}_{\mathfrak X^{\ord}}\widehat\otimes R$ and that it is functorial the following sense: let $f\colon R\to R'$ and $\kappa\colon A_\fp^\times \to R^\times$. If $\kappa'=f\circ\kappa\colon A_\fp^\times \to R'^\times $, then
\[\omega^{\kappa'} \cong \omega^\kappa \otimes_{R,f}R'.\]
These families have a universal one. Let $\Lambda\coloneqq A_\fp\llbracket A_\fp^\times \rrbracket$ and $\kappa^{un}\colon A_\fp^\times \to \Lambda^\times$, the universal character. The \emph{universal family} is defined by $\omega^{\kappa^{un}}$.

We note that if $\kappa\colon A_\fp^\times\to R^\times$, then there exists a unique $A_\mathfrak p$-algebra $f_{\kappa}\colon \Lambda \to R$ such that $\kappa=f\circ\kappa^{un}$. Thus, by the functoriality, we have that
\[\omega^\kappa\cong \omega^{\kappa^{un}} \otimes_{\Lambda,f_\kappa}R.\]
In particular, if $k:A_\fp^\times \to A_\fp^\times$ is the homomorphism sending $x$ to $x^k$, where $k\in \mathbb{Z}$, then \begin{equation}\label{eq:sp k}
\omega^k\cong \omega^{\kappa^{un}} \otimes_{\Lambda,f_k}A_\fp.    
\end{equation}  In words, the specialization  of $\omega^{\kappa^{un}}$ via $f_k$ is $\omega^k$. We thus have constructed families of $\omega^k$, $k\in \mathbb{Z}$. %({\color{blue} most of the result seems to be abstract non-sense. But no clear how much of these one can find in the literature. Specially the relation between $\omega$ and $(\mathcal{O}_{\operatorname{Ig}}\widehat\otimes A_\fp )^{A_\fp^\times}$}).

\subsection{Frobenius and \texorpdfstring{$U_\fp$}{Up}} \label{subsec:FU}
We have a map $F:\mathfrak X^{\rm ord} \to \mathfrak X^{\rm ord}$, sending $E\mapsto E/H^{\can}_1$, and the level structure on $E/H_1^{\rm can}$ induced by the isogeny $E\to E/H_1^{\rm can}$. We can extend this to $\operatorname{Ig}$ as follows. 
By definition, $\operatorname{Ig}$ parametrizes (in a dense open subset) ordinary Drinfeld modules with level structure and a trivialization of $T_\fp((H^{\can})^D))$. This allows us to define $F\colon \operatorname{Ig}\to \operatorname{Ig}$, as follows
\[(E, \psi:A_{\mathfrak p}\xrightarrow{\sim} T_\fp((H^{\can})^D))\mapsto(E/H^{\can}_1, \psi':A_{\mathfrak p}\xrightarrow{\sim} T_\fp((H^{\can}/H^{\can}_1)^D)
),\]
where the level structure of $E/H^{\rm can}_1$ is the one induced from $E$ and the natural projection, and $\psi' \colon A_{\fp} \xrightarrow{\varpi\psi} \varpi T_\fp((H^{\can})^D)  \cong T_\fp((H^{\can}/H^{\can}_1)^D)$. 
\begin{prop} The section morphism $F^* \mathcal{O}_{\rm Ig}\to \mathcal{O}_{\rm Ig}$ of $F\colon \operatorname{Ig}\to \operatorname{Ig}$ descends to  a morphism $F^*\omega^{\kappa^{un}}\to \omega^{\kappa^{un}}$.
\end{prop}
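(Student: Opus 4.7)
The plan is to show that $F\colon \operatorname{Ig}\to\operatorname{Ig}$ is $A_\fp^\times$-equivariant; once this is established, the descent of $F^*\mathcal O_{\operatorname{Ig}}\to\mathcal O_{\operatorname{Ig}}$ to $F^*\omega^{\kappa^{un}}\to\omega^{\kappa^{un}}$ is a formal consequence of the presentation $\omega^{\kappa^{un}}=(\mathcal O_{\operatorname{Ig}}\widehat\otimes\Lambda)^{A_\fp^\times}$.

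First, I would write down the $A_\fp^\times$-action on $\operatorname{Ig}$ explicitly: $a\in A_\fp^\times$ sends a point $(E,\psi\colon A_\fp\xrightarrow{\sim} T_\fp((H^{\can})^D))$ to $(E,a\psi)$. Next, I would compute $F(a\cdot(E,\psi))$. By definition, $F$ sends this to $(E/H_1^{\can},(a\psi)')$, where $(a\psi)'$ is the trivialization obtained from the composite
\begin{equation*}
A_\fp\xrightarrow{\varpi\cdot a\psi}\varpi\, T_\fp((H^{\can})^D)\cong T_\fp((H^{\can}/H_1^{\can})^D).
\end{equation*}
Since multiplication by $a$ commutes with multiplication by $\varpi$, we have $\varpi\cdot a\psi=a\cdot(\varpi\psi)$, so $(a\psi)'=a\cdot\psi'$. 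Note also that the canonical subgroup $H_1^{\can}$ depends only on $E$ and not on the trivialization, so the first factor of $F(a\cdot(E,\psi))$ is simply $E/H_1^{\can}$, unchanged by $a$. Hence $F(a\cdot(E,\psi))=a\cdot F(E,\psi)$, proving equivariance.

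Having checked equivariance, the section morphism $F^*\mathcal O_{\operatorname{Ig}}\to\mathcal O_{\operatorname{Ig}}$ is a morphism of $A_\fp^\times$-equivariant sheaves on $\operatorname{Ig}$. Tensoring by $\Lambda$ (viewed as carrying the $A_\fp^\times$-action through $\kappa^{un}$) and using that completed tensor product preserves the equivariant structure, I get an $A_\fp^\times$-equivariant morphism
\begin{equation*}
F^*(\mathcal O_{\operatorname{Ig}}\widehat\otimes\Lambda)\longrightarrow \mathcal O_{\operatorname{Ig}}\widehat\otimes\Lambda
\end{equation*}
with respect to the diagonal actions. Since $\operatorname{Ig}\to\mathfrak X^{\ord}$ is an $A_\fp^\times$-torsor, taking $A_\fp^\times$-invariants commutes with pullback along $F$, and the resulting map of invariants is exactly the desired morphism $F^*\omega^{\kappa^{un}}\to\omega^{\kappa^{un}}$.

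The main obstacle is the bookkeeping around the definition of $\psi'$: one must make sure that the identification $\varpi T_\fp((H^{\can})^D)\cong T_\fp((H^{\can}/H_1^{\can})^D)$ used to produce $\psi'$ is $A_\fp^\times$-equivariant, which ultimately comes down to the fact that multiplication-by-$\varpi$ and multiplication-by-$a$ commute in the $A_\fp$-module $T_\fp((H^{\can})^D)$. All the remaining steps (commuting invariants with pullback along a torsor, and commuting completed tensor with the equivariant structure) are formal.
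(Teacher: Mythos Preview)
The paper states this proposition without proof, so there is no argument to compare against. Your approach is correct and is precisely the natural one: the key point is the $A_\fp^\times$-equivariance of $F$ on $\operatorname{Ig}$, which you verify directly from the definition of $\psi'$ and the commutativity of $A_\fp$, and the passage to $(\,\cdot\,\widehat\otimes\Lambda)^{A_\fp^\times}$ is then formal. One small remark: the step ``taking $A_\fp^\times$-invariants commutes with pullback along $F$'' is really the observation that, because $F_{\operatorname{Ig}}$ is equivariant over $F_{\mathfrak X^{\ord}}$ and $\operatorname{Ig}\to\mathfrak X^{\ord}$ is a torsor, the square with $\operatorname{Ig}\to\operatorname{Ig}$ over $\mathfrak X^{\ord}\to\mathfrak X^{\ord}$ is Cartesian, so $F_{\mathfrak X^{\ord}}^*(\pi_*\mathcal O_{\operatorname{Ig}})\cong \pi_*\mathcal O_{\operatorname{Ig}}$ as $A_\fp^\times$-equivariant sheaves and the pullback of functions along $F_{\operatorname{Ig}}$ realizes the desired map; you essentially say this, but it is worth making the Cartesian-square point explicit.
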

By abuse of notation, we will still denote the morphism $F^*\omega^{\kappa^{un}}\to \omega^{\kappa^{un}}$  by $F$.

We also consider a variant $F'\colon \mathfrak X^{\rm ord}\to \mathfrak X^{\rm ord}$ which is defined in the same way as $F$ except that we endow $E/H_1^{\can}$ with the level structure via the dual isogeny, i.e., $F'=\langle\varpi\rangle^{-1}\circ F$, where $\langle \varpi \rangle $ is the operator on ${\mathfrak X}$ that acts on the prime-to-$\mathfrak p$ level structure as multiplication by $\varpi$. As before, we extend this map to $\operatorname{Ig}$.  

\begin{prop} The trace morphism $\operatorname{Tr}_{F'}: F'_*\mathcal{O}_{\rm Ig}\to \mathcal{O}_{\rm Ig}$ of $F'\colon \operatorname{Ig}\to \operatorname{Ig}$ satisfies  $\operatorname{Tr}_{F'}(F'_*\mathcal{O}_{\rm Ig})\subset\varpi \mathcal{O}_{\rm Ig}$ and $\varpi^{-1}\operatorname{Tr}_{F'}$ descends to  a morphism $U_\fp \colon F'_*\omega^{\kappa^{un}}\to \omega^{\kappa^{un}}$.
\end{prop}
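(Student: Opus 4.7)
The plan is to reduce the statement to a local computation in Serre--Tate coordinates, exactly parallel to Corollary \ref{cor:traceord}, and then transport the result to the Igusa tower and to $\omega^{\kappa^{un}}$ by equivariance.

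First I would identify $F'$ locally with a Carlitz polynomial map. The section $s\colon \mathfrak X^{\rm ord}\to X_0(\mathfrak p)^{\rm ord,F}$ sending $E\mapsto(E,\ker F_{\mathfrak p,E})$ is an isomorphism (by Proposition \ref{prop:corr-special}, $p_1$ restricts to an isomorphism on the $F$-component), and up to the $\langle\varpi\rangle^{-1}$ prime-to-$\mathfrak p$ twist---which is \'etale and plays no role in the $\mathfrak p$-adic trace computation---one has $F'=p_2\circ s$. It therefore suffices to describe $p_2$ on $X_0(\mathfrak p)^{\rm ord,F}$ in Serre--Tate coordinates. By the Atkin--Lehner--type involution $(E,H)\mapsto (E/H, E[\mathfrak p]/H)$, which swaps $p_1$ with $p_2$ and the two ordinary components, Proposition \ref{prop:ordlocal} transfers verbatim: locally at an ordinary point, $p_2$ on $X_0(\mathfrak p)^{\rm ord,F}$ identifies with $\operatorname{Spec}(\breve A_\mathfrak p\llbracket X\rrbracket)\to\operatorname{Spec}(\breve A_\mathfrak p\llbracket X\rrbracket)$ given by $X\mapsto [\varpi](X)$.

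Granted this local picture, the argument of Corollary \ref{cor:traceord}(2) applies directly: writing $\breve A_\mathfrak p\llbracket X\rrbracket$ as a free module over itself via $X\mapsto[\varpi](X)$ with basis $\{1,X,\ldots,X^{q^d-1}\}$ and using that every non-leading coefficient of the Carlitz polynomial is divisible by $\varpi$ (\cite[Corollary 2.12]{CCar}), the trace lands in $\varpi\breve A_\mathfrak p\llbracket X\rrbracket$. This gives $\operatorname{Tr}_{F'}(F'_*\mathcal O_{\mathfrak X^{\rm ord}})\subset \varpi\mathcal O_{\mathfrak X^{\rm ord}}$. Since $\operatorname{Ig}\to\mathfrak X^{\rm ord}$ is pro-\'etale ($A_\mathfrak p^\times$-torsor, inverse limit of \'etale finite-level covers) and $F'$ on $\operatorname{Ig}$ lifts $F'$ on $\mathfrak X^{\rm ord}$, trace compatibility with \'etale base change propagates this to $\operatorname{Tr}_{F'}(F'_*\mathcal O_{\rm Ig})\subset \varpi\mathcal O_{\rm Ig}$.

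For the descent to $\omega^{\kappa^{un}}$, I would verify $A_\mathfrak p^\times$-equivariance of $F'$ on $\operatorname{Ig}$: for $a\in A_\mathfrak p^\times$, commutativity of $A_\mathfrak p$ gives $F'(a\cdot(E,\psi))=(E/H^{\can}_1,\varpi(a\psi))=a\cdot F'(E,\psi)$. Hence $\varpi^{-1}\operatorname{Tr}_{F'}\colon F'_*\mathcal O_{\rm Ig}\to\mathcal O_{\rm Ig}$ is $A_\mathfrak p^\times$-equivariant and $\mathcal O_{\mathfrak X^{\rm ord}}$-linear; extending $\Lambda$-linearly to $F'_*(\mathcal O_{\rm Ig}\widehat\otimes\Lambda)\to \mathcal O_{\rm Ig}\widehat\otimes\Lambda$ and taking $A_\mathfrak p^\times$-invariants (with the diagonal action through $\kappa^{un}$) produces $U_\mathfrak p\colon F'_*\omega^{\kappa^{un}}\to\omega^{\kappa^{un}}$.

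The main obstacle is the first step: justifying rigorously the Carlitz polynomial description of $p_2$ on $X_0(\mathfrak p)^{\rm ord,F}$. The symmetry argument must be checked to be compatible with the chosen Serre--Tate coordinates. If a clean involution is not available in the compactified setting, one falls back on a direct computation via Theorem \ref{thm:SerreTate}: tracking how $q_E$ pulls back under $F\colon E\to E^{(\mathfrak p)}$ uses that the dual isogeny $F^D=V$ acts as multiplication by $\varpi$ on $T_\mathfrak p E^D(k)$ (the \'etale part), which translates precisely into the Carlitz polynomial action on the Serre--Tate coordinate. Either way, the only genuinely new input beyond what the paper has already set up is this translation from the dual-Verschiebung-by-$\varpi$ statement to the Carlitz polynomial on Serre--Tate coordinates.
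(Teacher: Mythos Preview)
The paper states this proposition without proof, so there is nothing to compare against directly. Your argument is correct and is the natural one: the trace divisibility reduces to Corollary~\ref{cor:traceord}(2), and the descent to $\omega^{\kappa^{un}}$ follows from the $A_\mathfrak p^\times$-equivariance of $F'$ on $\operatorname{Ig}$, which you verify.

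One simplification is available. You route through the $F$-component, identifying $F$ (hence $F'$ up to $\langle\varpi\rangle^{-1}$) with $p_2\circ p_1^{-1}$ there, and then invoke an Atkin--Lehner-type involution to transport the local description of $p_1$ from Proposition~\ref{prop:ordlocal} over to $p_2$. The paper's own organization suggests the mirror image: in \S\ref{subsec:FUT} it records that on $\mathfrak X_0(\mathfrak p)^{\ord,V}$ the map $p_2$ is an isomorphism and $p_1$ identifies with $F'$ directly. With that identification, Corollary~\ref{cor:traceord}(2) gives $\operatorname{Tr}_{F'}\subset\varpi$ immediately, with no symmetry step. Your route and this one are exchanged precisely by the involution you use, so nothing is lost either way; the $V$-component version just saves you the ``main obstacle'' you flag in your last paragraph.

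Your propagation to $\operatorname{Ig}$ is fine: $F'$ on $\operatorname{Ig}$ is $A_\mathfrak p^\times$-equivariant, so the square over $F'$ on $\mathfrak X^{\ord}$ is Cartesian (a map of torsors), and trace commutes with that base change. The fallback computation you sketch---that $F^D=V$ acts by $\varpi$ on the \'etale Tate module, read off in Serre--Tate coordinates---is exactly what underlies the $V$-case of Proposition~\ref{prop:ordlocal}, so no new input is required.
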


\subsection{Frobenius, \texorpdfstring{$U_\fp$}{Up} and \texorpdfstring{$T_\fp$}{Tp}} \label{subsec:FUT}
Let us study the specialization to an integer $k$ of the previous morphism. By abuse of notation we still denote them by $F$ and $U_\mathfrak p$. 

We know that on $\mathfrak X_0(\mathfrak p)^{\ord,F}$, the map $p_1$ is an isomorphism and the map $p_2$ identifies with the map $F\colon \mathfrak X^{\rm ord}\to\mathfrak X^{\rm ord}$. On $\mathfrak X_0(\mathfrak p)^{\ord,V}$  the map $p_2$ is an isomorphism and the map $p_1$ identifies with the map $F'\colon \mathfrak X^{\rm ord}\to\mathfrak X^{\rm ord}$. Then we can identify $F$ and $U_\mathfrak p$ as correspondences $p_2^*\omega^k \to p_1^!\omega^k$ supported respectively on $\mathfrak X_0(\mathfrak p)^{\ord,F}$  and $\mathfrak X_0(\mathfrak p)^{\ord,V}$. With this, can compare $F$ and $U_\mathfrak p$ with $T_\mathfrak p$.

\begin{prop}\label{pro:FUT} We have that $F$ is, up to a non-zero constant,  equal to the projection on the component $\mathfrak X_0(\mathfrak p)^{\ord,F}$ of $T_\mathfrak p$. Similarly,  $U_\mathfrak p$ is up to a non-zero constant  equal to the projection on the component $\mathfrak X_0(\mathfrak p)^{\ord,V}$ of $T_\mathfrak p$.
\end{prop}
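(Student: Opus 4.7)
The plan is to handle the two components of $\mathfrak X_0(\mathfrak p)^{\rm ord}$ separately, unfolding the definition of $T_\mathfrak p$ from Section \ref{subsec:corrTp} and matching it termwise with the constructions of $F$ and $U_\mathfrak p$ from Section \ref{subsec:FU}. The key structural inputs are Proposition \ref{prop:corr-special} (which identifies the universal isogeny $\pi$ and the projections $p_1, p_2$ on each component) and Corollary \ref{cor:traceord} (which controls the trace ${\rm tr}_{p_1}$).

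First, I would restrict to $\mathfrak X_0(\mathfrak p)^{\ord,F}$. By Proposition \ref{prop:corr-special}, $p_1$ is an isomorphism and $\pi$ identifies with the Frobenius isogeny $\mathcal E \to \mathcal E^{(\mathfrak p)}$; since on the ordinary locus $\ker F = H_1^{\can}$, the map $p_2$ coincides with the map $F \colon \mathfrak X^{\rm ord} \to \mathfrak X^{\rm ord}$ sending $E \mapsto E/H_1^{\can}$ of Section \ref{subsec:FU}. The tensor-map $\pi_k \colon p_2^*\omega^k \to p_1^*\omega^k$ then becomes exactly the map on differentials induced by $F$, i.e.\ the map $F^*\omega^k \to \omega^k$. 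Corollary \ref{cor:traceord}(1) says that ${\rm tr}_{p_1}$ is an isomorphism here, so after absorbing it one sees that the restriction of $T_\mathfrak p^{\naive}$ to this component is $F$ up to an isomorphism, and the normalization $\varpi^{-\min(1,k)}$ introduces only a non-zero constant.

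Next, on $\mathfrak X_0(\mathfrak p)^{\ord,V}$, Proposition \ref{prop:corr-special} gives that $p_2$ is an isomorphism and $\pi$ identifies with the Verschiebung $V \colon \mathcal E^{(\mathfrak p)} \to \mathcal E$. The projection $p_1$ sends $(E,\ker V) \mapsto E/\ker V \cong E^{(\mathfrak p)}$ equipped with the level structure induced by the \emph{dual} isogeny, which is precisely the map $F' = \langle\varpi\rangle^{-1}\circ F$ of Section \ref{subsec:FU}. Thus $\pi_k$ becomes $(F')^*\omega^k \to \omega^k$. By Corollary \ref{cor:traceord}(2), ${\rm tr}_{p_1}$ factors through the isomorphism $\mathcal O_{X_0(\mathfrak p)} \cong \varpi \cdot p_1^!\mathcal O_X$, and $\varpi^{-1}{\rm tr}_{p_1}$ is precisely the normalized trace defining $U_\mathfrak p$. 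The restriction of $T_\mathfrak p$ to this component therefore matches $U_\mathfrak p$ up to a non-zero scalar.

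The main subtlety, and the step I would expect to be trickiest, is lifting the comparison from a single integer weight $k$ to the universal character $\kappa^{un}$: one must check that the constructions of $F$, $U_\mathfrak p$, and $T_\mathfrak p$ commute with the formation of $\omega^{\kappa^{un}} = (\mathcal O_{\operatorname{Ig}}\widehat\otimes \Lambda)^{A_\fp^\times}$. This reduces to verifying that the universal isogeny acts compatibly on the tower of Cartier--Taguchi duals $(H_n^{\can})^D$, so that $\pi$ descends to a morphism on the Igusa tower matching the already-given definitions of $F$ and $U_\mathfrak p$. Once this functoriality is in place, the identifications above propagate verbatim to $\omega^{\kappa^{un}}$ and the proposition follows.
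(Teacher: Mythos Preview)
Your approach is essentially correct and is the natural direct argument: you unfold the definition of $T_\mathfrak p = \varpi^{-\min(1,k)}(\pi_k \otimes {\rm tr}_{p_1})$ on each component and match the pieces with the constructions of $F$ and $U_\mathfrak p$ using Proposition~\ref{prop:corr-special} and Corollary~\ref{cor:traceord}. This is in fact the structural setup that the paper records in the paragraph \emph{preceding} the proposition. The paper's own proof then finishes differently: for $U_\mathfrak p$ it does not unfold ${\rm tr}_{p_1}$ but instead invokes \cite[Th\'eor\`eme 3.15]{NiRo2021}, which gives the congruence $U_\mathfrak p \equiv \varpi^{-1}T_\mathfrak p^{\naive}$ modulo $\mathfrak p$ for $k>2$ directly (the contribution of the matrix $\left(\begin{smallmatrix}\varpi&0\\0&1\end{smallmatrix}\right)$ vanishes mod $\mathfrak p$); for $F$ it argues via the integrality analysis in Proposition~\ref{prop:Tpint} that the $V$-component of $T_\mathfrak p$ already vanishes mod $\mathfrak p$ when $k\le 0$. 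Your route is more self-contained; the paper's route is quicker but relies on the external reference.

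Two small points. First, your description of $p_1$ on $\mathfrak X_0(\mathfrak p)^{\ord,V}$ is garbled: $p_1$ sends $(E,H)\mapsto E$, not $E/H$; what you really mean (and what the paper states) is that after identifying $\mathfrak X_0(\mathfrak p)^{\ord,V}\cong \mathfrak X^{\ord}$ via $p_2$, the map $p_1$ becomes $F'$. Second, your final paragraph about lifting to $\omega^{\kappa^{un}}$ is unnecessary here: the proposition is stated and proved at a fixed integer weight $k$ (the opening sentence of \S\ref{subsec:FUT} says so explicitly), so no $\Lambda$-interpolation is needed. You should also be explicit that the ``non-zero constant'' produced by your argument is a power of $\varpi$ whose exponent depends on $k$ and on the component (e.g.\ $\varpi^{k-1}$ on the $F$-component when $k\ge 1$); this is exactly what makes the other component vanish modulo $\mathfrak p$ in the relevant range of $k$, which is the content actually used in Corollary~\ref{cor:classic}.
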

\begin{proof}
   For $U_{\mathfrak p}$, note that by \cite[Th\'eor\`eme 3.15]{NiRo2021} $U_{\mathfrak p}$ and $T^{\naive}_{\mathfrak p}$ divided by $\varpi$  are congruent modulo $\mathfrak p$ if $k>2$, this is by because, by the normalization of [{\it loc. cit.}], the action of the matrix  $\left( \begin{array}{cc}
    \varpi & 0 \\
    0 & 1
\end{array}\right)$ is $0$ modulo $\mathfrak p$.

   For negative $k$, we need to show that the cohomological correspondence \[T_\mathfrak p\colon p_2^*(\omega^{1-k}\otimes\omega_D) \to p_1^!(\omega^{1-k}\otimes\omega_D)\] (under the isomorphism $\omega\cong\omega_D$ which holds on the ordinary locus) vanishes on $\mathfrak X_0(\mathfrak p)^{\ord,V}$. 
   By the discussion in Proposition \ref{prop:Tpint}, $T_{\mathfrak p}^{\naive}$ is already integral over $\mathfrak X_0(\mathfrak p)^{\ord,V}$, so multiplication by $\varpi^{-k}$ is $0$ modulo $\mathfrak{p}$.
\end{proof}
\begin{rmk}
    Compare this with the classical expression of $T_p$ for elliptic modular forms (see \cite[Lemma 4.13]{BoPi2022}):
    \[
\frac{1}{p}\sum_{j=0}^{p-1} |_k \left( \begin{array}{cc}
    1 & j \\
    0 & p
\end{array}\right) + p^{k-1}|_k \left( \begin{array}{cc}
    p & 0 \\
    0 & 1
\end{array}\right).
    \]
    If $k >0$, then modulo $p$ the last term vanishes and what is left is $U_p$. If $k<0$, then it needs to be renormalized by $p^{1-k}$, and modulo $p$ the first $p$ terms vanish, and we are left with the action of $\left( \begin{array}{cc}
    p & 0 \\
    0 & 1
\end{array}\right)$ that on $q$-expansion is 
\[
\sum_{n=1}^\infty a_n q^n \mapsto \sum_{n=1}^\infty a_n q^{pn},
\]
which modulo $p$ is exactly the Frobenius. 
\end{rmk}
We can now improve Corollary \ref{cor:classicTp} using the previous proposition:
\begin{cor}\label{cor:classic}
    The corestriction map induces the following isomorphisms
    \begin{enumerate}
        \item For $k\geq 3$ we have an isomorphism $e(U_{\mathfrak p})\operatorname{H}^0(\mathfrak X^{\ord},\omega^k)\cong e(T_\mathfrak p)\operatorname{H}^0(X,\omega^k)$
        \item For each $k\leq -1$ we have an isomorphism   \[e(F)\operatorname{H}^1_c(\mathfrak X^{\ord},\omega^{2-k}(-2D))\cong e(T_\mathfrak p)\operatorname{H}^1(X,\omega^{1-k}\otimes\omega_D(-2D)). \]
    \end{enumerate}
\end{cor}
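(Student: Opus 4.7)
The plan is to deduce the corollary from Corollary \ref{cor:classicTp} by showing that, on the relevant ordinary-locus cohomology, the idempotents $e(U_\mathfrak p)$ and $e(T_\mathfrak p)$ (respectively $e(F)$ and $e(T_\mathfrak p)$) coincide. The geometric input is Proposition \ref{pro:FUT}, which realizes $T_\mathfrak p$ on $\mathfrak X^{\ord}$ as a sum of two correspondences supported on $\mathfrak X_0(\mathfrak p)^{\ord, F}$ and $\mathfrak X_0(\mathfrak p)^{\ord, V}$ and identified, up to non-zero constants, with $F$ and $U_\mathfrak p$ respectively.

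For part (1), fix $k \geq 3$ and write $T_\mathfrak p = c_1 F + c_2 U_\mathfrak p$ on $\operatorname{H}^0(\mathfrak X^{\ord}, \omega^k)$. The proof of Proposition \ref{pro:FUT} already establishes that the first summand vanishes modulo $\mathfrak p$: this uses the normalization $T_\mathfrak p = \varpi^{-1}T_\mathfrak p^{\naive}$ for positive weights together with the mod-$\mathfrak p$ triviality of the action of $\begin{pmatrix}\varpi & 0 \\ 0 & 1\end{pmatrix}$ recorded in \cite[Théorème 3.15]{NiRo2021}. Hence $T_\mathfrak p \equiv c_2 U_\mathfrak p \pmod{\mathfrak p}$ as operators. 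Both $e(T_\mathfrak p)$ and $e(U_\mathfrak p) = e(c_2 U_\mathfrak p)$ are then idempotents lifting the same idempotent of $\operatorname{End}(\operatorname{H}^0(X_1^{\ord}, \omega^k))$, so by standard uniqueness of such lifts along the $\mathfrak p$-adic reduction map they must coincide. Composing with the isomorphism of Corollary \ref{cor:classicTp}(1) gives the claim.

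Part (2) proceeds symmetrically. For $k \leq -1$, the same analysis in the proof of Proposition \ref{pro:FUT} applied in the negative-weight case (and using $\omega \cong \omega_D$ on the ordinary locus to bring the weight into the form handled there) shows that the $U_\mathfrak p$-contribution to $T_\mathfrak p$ on $\operatorname{H}^1_c(\mathfrak X^{\ord}, \omega^{2-k}(-2D))$ vanishes modulo $\mathfrak p$. Hence $T_\mathfrak p \equiv c_1' F \pmod{\mathfrak p}$, the same idempotent-lift argument produces $e(F) = e(T_\mathfrak p)$, and combining with Corollary \ref{cor:classicTp}(2) finishes the proof.

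The step requiring the most care is the lifting of idempotents in the $H^1_c$ case, since there the cohomology is only profinite (defined via the condensed six-functor formalism of \cite{ClSc2019}) rather than finitely generated over $A_\mathfrak p$. One must check that the operator whose action vanishes modulo $\mathfrak p$ preserves the defining inverse system; this is automatic since it arises from a cohomological correspondence on $\mathfrak X_0(\mathfrak p)^{\ord}$ respecting the same filtration as $T_\mathfrak p$ itself. Consequently the relevant endomorphism algebra is $\mathfrak p$-adically separated and complete in a manner that allows the lifting argument to go through just as in the $H^0$ case.
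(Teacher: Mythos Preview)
Your overall strategy---combine Proposition \ref{pro:FUT} with Corollary \ref{cor:classicTp}---is exactly what the paper has in mind; indeed the paper offers no proof beyond the sentence ``We can now improve Corollary \ref{cor:classicTp} using the previous proposition'', so you are simply filling in what was left implicit.

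There is, however, one point where your justification is too quick. You assert that $e(T_\mathfrak p)$ and $e(U_\mathfrak p)$ \emph{coincide} because they are ``idempotents lifting the same idempotent'' and invoke ``standard uniqueness of such lifts''. Uniqueness of idempotent lifts is a theorem about \emph{commutative} $\varpi$-adically complete rings; in a non-commutative endomorphism ring it fails (e.g.\ in $M_2(A_\mathfrak p)$ the matrices $\left(\begin{smallmatrix}1&0\\0&0\end{smallmatrix}\right)$ and $\left(\begin{smallmatrix}1&0\\ \varpi&0\end{smallmatrix}\right)$ are distinct idempotents that agree modulo $\varpi$). Here $F$ and $U_\mathfrak p$ do not commute in general (classically $U_p\circ V=1$ but $V\circ U_p\neq 1$), so you cannot pass to a commutative subalgebra.

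What \emph{is} true, and is enough, is that if $e\equiv e'\pmod\varpi$ are idempotents on a $\varpi$-adically complete module $M$, then the map $e'|_{eM}\colon eM\to e'M$ is an isomorphism: writing $e'=e+\varpi s$ one checks that $e\circ e'|_{eM}=1+\varpi(\text{endo})$ on $eM$ and symmetrically, hence both composites are units. Applying this with $e=e(T_\mathfrak p)$, $e'=e(U_\mathfrak p)$ on $M=\operatorname{H}^0(\mathfrak X^{\ord},\omega^k)$ gives a natural Hecke-equivariant (for the prime-to-$\mathfrak p$ Hecke algebra, which commutes with both projectors) isomorphism $e(T_\mathfrak p)M\cong e(U_\mathfrak p)M$, and composing with Corollary \ref{cor:classicTp}(1) yields the claim. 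The same remark applies verbatim to part (2). Your final paragraph about compatibility with the profinite inverse system is fine and needs no change.
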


%{\color{blue} (Comments about the section: one we have a good theory of canonical groups, the Hodge-Tate-Taguchi ideas works nice. For the definition of our main morphism $F$ and $U_\mathfrak p$, we still need to check the definition  of $U_\mathfrak p$ as in its definition we might need to deal again with the non-autoduality of Drinfeld modules. But once that is clear, one should be able to write precisely the definition, find the precise constant in the last Proposition and find a formula for the Serre dual of $F$).}

\section{Higher Hida Theory} \label{sec:HHidaTh}

\subsection{On the Iwasawa algebra}
Let $G$ be a profinite group, and suppose that it is isomorphic to $\lim_m G_m$, where the $G_m$'s are finite groups. Let us write $A_\mathfrak p\llbracket G\rrbracket= \lim_m (A_{\mathfrak p}/\mathfrak p^m )[G_m]$.
 This algebra is called the Iwasawa algebra of $G$. In the special case when $G=A_\mathfrak p^\times$, we denote $A_\mathfrak p\llbracket G\rrbracket$ by $\Lambda$.  Our first goal of this section is to find a suitable description of the profinite structure of $\Lambda$.

%Let us denote $R_m=A_{\mathfrak p}/\mathfrak p^m [(A_{\mathfrak p})^\times/(1+\mathfrak p^m)]$ and $I_m=\ker (\Lambda \to R_m)$. Recall that the profinite group  $A_\mathfrak p^\times$ has a natural filtration by the open subgroups $(1+\mathfrak p^i)$,  $i\in \mathbb N$ such that the successive quotient are isomorphic to a $k(\mathfrak p)$-vector spaces of dimension $1$.
\begin{lem}\label{lem:iwalg}
The ring $\Lambda$ is a profinite semilocal ring 
\[
\Lambda \cong \prod_{ \chi \in {(\kappa(\mathfrak p)^\times)}^*} \Lambda_0,  
\]
for $\Lambda_0:=A_{\mathfrak p} \llbracket 1+\mathfrak p  \rrbracket $, the Iwasawa algebra of  $1+\mathfrak p $.
Moreover, we can choose a family of ideals  $I_r$ giving the profinite structure such that the $\Lambda$-action on $I_r/I_{r+1}$ factors via
\[
\prod_{ \chi \in {(\kappa(\mathfrak p)^\times)}^*} \kappa(\mathfrak p)^.
\]
\end{lem}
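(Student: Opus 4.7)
The plan is to attack this lemma in two stages: first the structural decomposition of $\Lambda$, then the construction of the filtration.

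For the decomposition, I would start by exploiting the fact that $A_\mathfrak{p}$ is a complete discrete valuation ring of equal characteristic $p$, so by Cohen's structure theorem it is isomorphic to $\kappa(\mathfrak{p})\llbracket\varpi\rrbracket$ with $\kappa(\mathfrak{p})$ embedded as a coefficient field. This provides a canonical splitting of topological groups
\[
A_\mathfrak{p}^\times \;\cong\; \kappa(\mathfrak{p})^\times \times (1+\mathfrak{p}),
\]
where the first factor is finite of order $q^d-1$ (coprime to $p$) and the second is pro-$p$. Consequently $\Lambda \cong A_\mathfrak{p}[\kappa(\mathfrak{p})^\times] \,\widehat{\otimes}_{A_\mathfrak{p}}\, \Lambda_0$. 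Since $(q^d-1)$ is a unit in $A_\mathfrak{p}$ and all $(q^d-1)$-th roots of unity already lie in $\kappa(\mathfrak{p})\subset A_\mathfrak{p}$, the orthogonal idempotents $e_\chi = \frac{1}{q^d-1}\sum_{g\in\kappa(\mathfrak{p})^\times}\chi(g^{-1})[g]$ give $A_\mathfrak{p}[\kappa(\mathfrak{p})^\times]\cong \prod_\chi A_\mathfrak{p}$, and tensoring yields $\Lambda\cong \prod_\chi \Lambda_0$. The ring $\Lambda_0$ is complete local with maximal ideal $\mathfrak{m}_0 = (\varpi)+\mathfrak{a}_0$ (where $\mathfrak{a}_0$ is the augmentation ideal of $1+\mathfrak{p}$) and residue field $\kappa(\mathfrak{p})$, because $1+\mathfrak{p}$ is pro-$p$ and its augmentation ideal lies in the Jacobson radical; hence $\Lambda$ is semilocal with exactly $q^d-1$ maximal ideals and Jacobson radical $\mathrm{Jac}(\Lambda) = \prod_\chi \mathfrak{m}_0$.

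For the filtration part, the natural candidates $\tilde{I}_n = \ker\bigl(\Lambda \to (A_\mathfrak{p}/\varpi^n)[A_\mathfrak{p}^\times/(1+\mathfrak{p}^n)]\bigr)$ present $\Lambda$ as a limit of finite Artinian rings, but their graded pieces are not automatically killed by $\mathrm{Jac}(\Lambda)$. To obtain such a property I would refine the filtration as follows: each finite quotient $\Lambda/\tilde{I}_n$ is Artinian with Jacobson radical $\bar{\mathfrak{m}}_n$ which is nilpotent (of index depending on $n$), and one can take $I_r$ to be the preimage in $\Lambda$ of $\bar{\mathfrak{m}}_{N(r)}^{\,r}$ for a suitably chosen cofinal sequence $N(r)\to\infty$ (the sequence being chosen large enough so that the resulting ideals are cofinal with the $\tilde{I}_n$'s and give the profinite topology). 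With this choice one checks $\Lambda = \varprojlim_r \Lambda/I_r$ and, by construction, $\mathrm{Jac}(\Lambda)\cdot I_r \subseteq I_{r+1}$, so that $I_r/I_{r+1}$ is naturally a module over $\Lambda/\mathrm{Jac}(\Lambda)\cong \prod_\chi \kappa(\mathfrak{p})$.

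The main obstacle is precisely the filtration construction, and the reason it requires care is pointed out in the introduction of the paper: $\Lambda_0$ is \emph{not} Noetherian, since $1+\mathfrak{p}$ is not topologically finitely generated as a $\mathbb{Z}_p$-module (in characteristic $p$ it is a topological $\mathbb{F}_p$-vector space of countably infinite dimension). In particular, the naive $\mathfrak{m}_0$-adic filtration, which would work in the classical setting of \cite{BoPi2022}, has infinite-dimensional graded pieces and does not present $\Lambda_0$ as a profinite limit of finite rings. The right filtration must interleave the $\varpi$-adic direction with the augmentation direction for $1+\mathfrak{p}$, using the finiteness of the layers $(1+\mathfrak{p}^n)/(1+\mathfrak{p}^{n+1})\cong \kappa(\mathfrak{p})$ to control the size of the quotients — this is the function-field-specific point that differs from \cite{BoPi2022} and motivates the separate lemma.
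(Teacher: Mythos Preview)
Your decomposition of $\Lambda$ is correct and essentially the same as the paper's (the paper invokes Fourier analysis from Bourbaki, which is exactly your orthogonal-idempotent argument).

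The filtration construction, however, has a genuine gap at the steps where $N(r)$ increases. You assert that $\mathrm{Jac}(\Lambda)\cdot I_r \subseteq I_{r+1}$ holds ``by construction'', but take $N=N(r)<N'=N(r+1)$: the ideal $I_r=\pi_N^{-1}(\bar{\mathfrak m}_N^{\,r})$ contains all of $\tilde I_N$, and inside $\Lambda/\tilde I_{N'}$ the image $\tilde I_N/\tilde I_{N'}$ contains elements of low $\bar{\mathfrak m}_{N'}$-degree --- for instance $[g]-1$ with $g$ a generator of $(1+\mathfrak p^{N})/(1+\mathfrak p^{N'})$ lies in $\bar{\mathfrak m}_{N'}$ but typically not in $\bar{\mathfrak m}_{N'}^{\,r}$ (it is not a high $p$-th power in the group). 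Multiplying such an element by $\mathrm{Jac}(\Lambda)$ only pushes it into $\bar{\mathfrak m}_{N'}^{\,2}$, not into $\bar{\mathfrak m}_{N'}^{\,r+1}$, so $\mathrm{Jac}(\Lambda)\cdot I_r\not\subseteq I_{r+1}$. The repair is easy and stays within your framework: instead of a single global exponent, refine each successive quotient $\tilde I_n/\tilde I_{n+1}$ by its own radical filtration $\bar{\mathfrak m}_{n+1}^{\,j}\cdot(\tilde I_n/\tilde I_{n+1})$ (finite length since $\Lambda/\tilde I_{n+1}$ is Artinian) and concatenate.

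The paper proceeds differently and more concretely: it fixes an isomorphism $\Lambda_0\cong A_{\mathfrak p}\llbracket T_1,T_2,\ldots\rrbracket$ coming from a choice of topological generators of $1+\mathfrak p$, sets $J_n=((\varpi,T_1,\ldots,T_n)^n,T_{n+1},T_{n+2},\ldots)$, and interpolates between $J_n$ and $J_{n+1}$ by an explicit finite chain of ideals that introduces the monomials divisible by $T_{n+1}$ one degree at a time, checking by hand that each graded piece is killed by the maximal ideal. Your (corrected) abstract argument and the paper's combinatorial one yield the same conclusion; the paper's version has the advantage of making the ideals completely explicit, while yours would avoid choosing coordinates.
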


\begin{proof}
The decomposition of  $\Lambda$ over   ${ \chi \in {(\kappa(\mathfrak p)^\times)}^*}$ is via Fourier analysis (see \textnumero 5 \S 21 of \cite{Brbkalg8}). We have 
\[
A_{\mathfrak p} \llbracket 1+\mathfrak p \rrbracket  \cong \varprojlim_m A_{\mathfrak p}/\mathfrak p^m [(1+\mathfrak p)/(1+\mathfrak p^m)]
\]
which is profinite.
As $1+\mathfrak p$ is a pro-$p$ group in infinitely many generators, \[
A_{\mathfrak p} \llbracket 1+\mathfrak p  \rrbracket  \cong A_{\mathfrak p} \llbracket T_1, T_2, \ldots \rrbracket.
\]
We set $I_0=J_0=(\varpi, T_1, T_2, \ldots)$, $J_n=((\varpi, T_1, T_2, \ldots, T_n)^n, T_{n+1}, \ldots)$. Define $I_2=J_2$.
Suppose we have defined $I_3, \ldots, I_{\alpha(n)}=J_n$. 
Note that a basis of $J_n/J_{n+1}$ is given by
\[
\varpi^{n}, \varpi^{n-1}T_1, \ldots, T_n^n, T_{n+1}, T_{n+1}\varpi, T_{n+1}^2, T_{n+1}T_2 ,\ldots, T_{n+1}T_n^{n-1}.
\]
In short, all the monomials of degree $n$ in $\varpi, T_1, T_2, \ldots, T_n$ and all monomials with $\varpi, T_1, T_2, \ldots, T_n$ and $T_{n+1}$
whose degree is less than $n+1$ and divisible by $T_{n+1}$.
We define $I_{\alpha(n)+1}=((\varpi, T_1, T_2, \ldots, T_n)^{n+1}, T_{n+1}, \ldots)$. The action of $\Lambda$ on $I_{\alpha(n)+1}/I_{\alpha(n)}$ is trivial. 
We define  $I_{\alpha(n)+j}$ as generated by:
\begin{itemize}
    \item $(\varpi, T_1, T_2, \ldots, T_n)^{n+1}$;
    \item all the monomials in $\varpi, T_1, T_2, \ldots, T_n$ and $T_{n+1}$ of degree $j$ which are divisible by $T_{n+1}$;
    \item all the other variables  $ T_{n+2}, \ldots$.
\end{itemize}  For $j=1$ we get back  
$I_{\alpha(n)+1}$ as we only add $T_{n+1}$.
We need to show that \[ \frac{I_{\alpha(n)+j}}{I_{\alpha(n)+j+1}} \] has the trivial $\Lambda$-action. It is generated over $\kappa(\mathfrak p)=A_{\mathfrak p} /\mathfrak p$  by monomials in  $\varpi, T_1, T_2, \ldots, T_n$ and $T_{n+1}$ of exact degree $j$ with at least a single power of $T_{n+1}$. The action of the maximal ideal of $\Lambda$ is zero: the extra variables $T_{n+2}, \ldots $ all act trivially. The action of $\varpi, T_1, T_2, \ldots, T_n$ and $T_{n+1}$ is also trivial, as multiplication by any of these elements on the basis will give a a monomial of  degree $j+1$, still divisible by $T_{n+1}$, which is then in $I_{\alpha(n)+j+1}$.
\end{proof}

% Note that $I_m/I_{m+1}$ is isomorphic as a $k(\mathfrak p)$-vector space to $k(\mathfrak p)[k(\mathfrak p)]$. 

\begin{rmk} The ring $\Lambda$ is non-noetherian. For simplicity suppose that $A_\mathfrak p^\times \cong \mathbb F_p\llbracket X\rrbracket^{\times}=\mathbb F_p^\times \times (1+X\mathbb F_p\llbracket X\rrbracket) $. Now in this case, $1+X\mathbb F_p\llbracket X\rrbracket$ has infinitely many $\mathbb Z_p$-topological generators, given by $(1+X)^m$ with $m$ a positive integer coprime with $p$, in formula,
\begin{align*}
    1+X\mathbb F_p\llbracket X\rrbracket &\xrightarrow{\sim}\prod_{m\colon p\nmid m}\mathbb Z_p.\\
    ( 1+X)^m &\mapsto e_m,
\end{align*}
where $e_m$ is canonical $m$-coordinate basis of the Cartesian product. Then $\Lambda = A_{\mathfrak p}[\mathbb F_p^\times]\otimes A_{\mathfrak p}\llbracket X_i \colon i\in \mathbb{N}\rrbracket$. This discussion also show the following: let $m_\Lambda$ be the kernel of the natural projection $\Lambda \to R_1 =\kappa(\mathfrak p)[\kappa(\mathfrak p)^\times]$, then $\Lambda$ is complete in its limit topology, but it is not $m_\Lambda$-adically complete (see \href{https://stacks.math.columbia.edu/tag/05JA}{Tag 05JA}).
\end{rmk}

We give some useful properties of our Iwasawa algebra. For that let us define the following: Let $k \in \mathbb{Z}_p$;  we define the  ideal $P_k$ to be the kernel of the map 
    \[
    \begin{array}{ccccc}
      f_k  : & \Lambda_0    & \longrightarrow  & A_{\mathfrak p} \\
         & 1+\mathfrak p A_{\mathfrak p} \ni [u] & \mapsto & u^k.
    \end{array}
    \]
 We denote by $\mathcal{C}(\mathbb{Z}_p,  A_{\mathfrak p} )$ the space of continuous functions $\mathbb{Z}_p \rightarrow A_{\mathfrak p}$.

\begin{prop} \label{prop:densek} 
 There is an injective morphism $\iota:   \Lambda_0    \longrightarrow    \mathcal{C}(\mathbb{Z}_p,  A_{\mathfrak p} ) $, induced by $1+\mathfrak p A_{\mathfrak p} \ni [u] \mapsto  (s \mapsto u^s)$ and extended  $A_{\mathfrak p} $ linearly.
Moreover, the set of ideals $\left\{ P_k \right\}_{ k \in I}$ is Zariski dense in $\Lambda_0$ as long as $I$ is dense subset of $\Z_p$.
    
\end{prop}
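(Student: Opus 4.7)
The plan is to establish the injectivity of $\iota$ first, and then derive the Zariski-density claim by combining injectivity with the continuity of $\iota(\lambda)$ on $\Z_p$.

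\smallskip
\noindent\emph{Construction of $\iota$.} For $u\in 1+\fp$, the map $s\mapsto u^s$ extends continuously from $\Z$ to $\Z_p$: in characteristic $p$ the identity $(1+x)^{p^N}=1+x^{p^N}$ gives $u^{p^N}\to 1$ rapidly in $A_\fp$ as $N\to\infty$, which makes $n\mapsto u^n$ uniformly continuous for the $p$-adic metric on $\Z$. Extending $A_\fp$-linearly yields a ring homomorphism out of the group algebra $A_\fp[1+\fp]$. To pass to the completion $\Lambda_0$, I would endow $\mathcal{C}(\Z_p,A_\fp)$ with the uniform-convergence topology and check that $u\in 1+\fp^n$ forces $u^s\in 1+\fp^n$ for every $s\in\Z_p$: writing $u=1+w$ with $w\in\fp^n$ and $s=\sum_ia_ip^i$, one has $u^s=\prod_i(1+w^{p^i})^{a_i}$, and each factor lies in $1+\fp^{np^i}\subset 1+\fp^n$. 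Hence $\iota([u]-1)$ has sup-norm at most $|\varpi|^n$, providing the continuity needed to extend to $\Lambda_0$.

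\smallskip
\noindent\emph{Injectivity.} Suppose $\iota(\lambda)=0$. Identifying $\Lambda_0$ with the ring of continuous $A_\fp$-valued measures on $1+\fp$ (the standard continuous-dual description of an Iwasawa algebra), we write $\iota(\lambda)(s)=\int_{1+\fp}u^s\,d\lambda(u)$. Via the Mahler expansion $u^s=\sum_{n\geq 0}\binom{s}{n}(u-1)^n$, the $n$-th Mahler coefficient of $\iota(\lambda)$ is the moment $\int(u-1)^n\,d\lambda$; by uniqueness of Mahler expansions, $\iota(\lambda)=0$ forces every moment to vanish. To then conclude $\lambda=0$, I would use the presentation $\Lambda_0\cong A_\fp\llbracket T_1,T_2,\ldots\rrbracket$ from Lemma~\ref{lem:iwalg} with $T_i=[u_i]-1$ and exploit the characteristic-$p$ identity $u_i^{p^N}-1=(u_i-1)^{p^N}$: the specializations read
$\iota(\lambda)(p^N)=\sum_\alpha c_\alpha\prod_i(u_i-1)^{p^N\alpha_i}$,
whose summands have $\fp$-adic valuations growing linearly in $p^N$ with slope $\sum_i\ord_\fp(u_i-1)\alpha_i$; a Newton-polygon argument as $N\to\infty$ isolates the terms of minimal slope and shows that these cannot all cancel unless every $c_\alpha=0$.

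\smallskip
\noindent\emph{Zariski density.} Given $\lambda\in\bigcap_{k\in I}P_k$, we have $\iota(\lambda)(k)=f_k(\lambda)=0$ for every $k\in I$. Since $\iota(\lambda)\in\mathcal{C}(\Z_p,A_\fp)$ is continuous and vanishes on the dense subset $I\subset\Z_p$, it vanishes identically. Injectivity then gives $\lambda=0$, so $\bigcap_{k\in I}P_k=(0)$, establishing Zariski density of $\{P_k\}_{k\in I}$ in $\mathrm{Spec}(\Lambda_0)$.

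\smallskip
\noindent The main obstacle is the injectivity step. In the classical setting of $\Z_p\llbracket T\rrbracket$, Weierstrass preparation makes injectivity immediate, but $\Lambda_0$ is non-noetherian and is a power-series ring in countably many variables, so a direct Weierstrass argument is unavailable. The Newton-polygon analysis on the specializations $s=p^N$ (available in characteristic $p$ via the Frobenius identity $u^{p^N}-1=(u-1)^{p^N}$) seems the cleanest route; alternatively, one could try to invoke a non-archimedean Stone--Weierstrass-type density result for $A_\fp$-valued continuous functions on the profinite set $1+\fp$, but care is required since the $A_\fp$-valued analogue of Mahler expansion does not identify $\mathcal{C}(1+\fp,A_\fp)$ with a polynomial completion over $A_\fp$.
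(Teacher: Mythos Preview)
Your construction of $\iota$ and the deduction of Zariski density from injectivity match the paper exactly. The divergence is entirely in the injectivity step.

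The paper takes precisely the route you list as an alternative and then set aside: it identifies $\Lambda_0$ with the algebra of $A_\fp$-valued measures on $1+\fp$, observes that $\iota(\lambda)(k)=\int_{1+\fp}u^k\,d\lambda(u)$ is the integral of a polynomial, and then invokes Kaplansky's theorem that polynomial functions are dense in $\mathcal{C}(1+\fp,\mathrm{Frac}(A_\fp))$, so a measure annihilating all polynomials vanishes. Your concern about $A_\fp$-coefficients is handled by tensoring with $\mathrm{Frac}(A_\fp)$ and using flatness to descend injectivity. So the ``care'' you flag is exactly one line.

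Your preferred Newton-polygon route, by contrast, has a genuine gap: it uses only the values $\iota(\lambda)(p^N)$, and these do \emph{not} determine $\lambda$. Concretely, choose distinct $u_1,u_2,u_3,u_4\in 1+\fp$ with $u_1-u_2-u_3+u_4=0$ (easy to arrange) and set $\lambda=[u_1]-[u_2]-[u_3]+[u_4]\neq 0$ in $A_\fp[1+\fp]\subset\Lambda_0$. Since Frobenius is $\bF_p$-linear in characteristic~$p$,
\[
\iota(\lambda)(p^N)=u_1^{p^N}-u_2^{p^N}-u_3^{p^N}+u_4^{p^N}=(u_1-u_2-u_3+u_4)^{p^N}=0
\]
for every $N\ge 0$, yet $\iota(\lambda)\not\equiv 0$ (e.g.\ evaluate at a small integer prime to $p$). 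So the assertion that the minimal-slope terms ``cannot all cancel'' is false as stated; more generally, many multi-indices $\alpha$ share the same slope $\sum_i\alpha_i\,\ord_\fp(u_i-1)$, and the $u_i-1$ lie in a one-dimensional local ring and are far from algebraically independent. Your Mahler-moment observation is the right reduction---vanishing of all $\int(u-1)^n\,d\lambda$ is exactly vanishing on polynomials---but completing it still requires a density input, which is Kaplansky.
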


\begin{proof}
   We show that we can embed $\Lambda_0 $ in the dual of $A_{\mathfrak p}$-valued continuous functions on $1+\mathfrak p$.
    % {\color{blue} We are not really using this,  right? First, recall that we have a Mahler isomorphism between the bounded linear functional on continuous functions on $A_{\mathfrak p}$ ({\it i.e.} measures or bounded distributions) and $A_{\mathfrak p}\llbracket  z \rrbracket \otimes_{A_{\mathfrak p}}\mathrm{Frac}(A_{\mathfrak p})$.} We define a measure associated with $x$, in the following way:
    Let $G=1+\mathfrak p$. If $x = \varprojlim x_i \in A[G/G_i]$, let $\mu_x$ be the measure such that $\mu_x(gG_i)=a_g$, where $x_i=\sum_{g G/G_i} a_g gG_i$. 
    This is indeed a measure: if  $x_{i+1}=\sum_{g' G/G_{i+1}} a_{g'} {g'}G_{i+1}$ is a lift of $x_i$, then we have $\sum_{g' \equiv g \bmod G_i } a_{g'}=a_g$, so writing $gG_i=\cup_{g' \equiv g \bmod G_i} g'G_{i+1}$ so we get 
    \[
    \sum_{g' \equiv g \bmod G_i} \mu_x(g'G_{i+1}) = \sum_{g' \equiv g \bmod G_i } a_{g'}=a_g = \mu_{x}(gG_i).
    \]
    We extend it to zero outside $1+\mathfrak p $.
  This map is injective  as if $\mu_x(gG_i)=0$ for all $i$'s and $g$, then $x_i=0$ for all $i$'s, and $x =0$.    
    By a result of Kaplansky (see \cite[Theorem 43.3]{Ultrametric calculus}) polynomial functions are dense in $\mathrm{Frac}(A_{\mathfrak p})$-valued continuous functions on $1+\mathfrak p$, so a distribution coming from $\Lambda_0\otimes_{A_{\mathfrak p}}\mathrm{Frac}(A_{\mathfrak p})$  that vanishes on all polynomials must be $0$. But $\otimes_{A_{\mathfrak p}}\mathrm{Frac}(A_{\mathfrak p})$ is flat, so we conclude that $\mu_x=0$ if it integrates all polynomials to $0$.
    
To show that the map $\iota$ is injective, suppose $\iota(x)(s)\equiv 0$. Then in particular it is $0$ when evaluated at a positive integer $k$. But $\mathrm{ev}_k \circ \iota$ is reducing modulo $P_k$ or equivalently $\mu_x(u^k)$, so $\mu_x=0$ by Kaplansky's result, and $x=0$.  For a general subset $I$ of $\mathbb Z_p$, we consider the map
\[
\begin{array}{ccccc}
     \prod_{k\in I} \mathrm{ev}_{k}:  &\mathcal{C}(\mathbb{Z}_p,  A_{\mathfrak p} )    & \longrightarrow  &  \prod_{k \in I} A_{\mathfrak p}  \\
         & f(s) & \mapsto & (f(k))_{k \in I}.
    \end{array}
\]
If $\iota(x)(s)$ vanishes for all $k \in I$, as $I$ is dense in $\Z_p$, then $\iota(x)(s)\equiv 0$ but as $\iota$ is injective, then $x=0$.
\end{proof}

\subsection{\texorpdfstring{The projectors $e(F)$ and $e(U_{\fp})$}{The projectors e(F) and e(Up)}}

We now can consider and define our main $\Lambda$-modules
\begin{center}
    $\operatorname{H}^0(\mathfrak X^{\ord},\omega^{\kappa^{un}})$ and $\operatorname{H}^1_c(\mathfrak X^{\ord},\omega^{\kappa^{un}}).$
\end{center}
Since $\mathfrak X^{\ord}$ is affine  and $\omega^{\kappa^{un}}$ a coherent sheaf, we can use the formalism of \cite{ClSc2019} to define the first cohomology group  of compact support. We are going to recall it. First, we define for every $n$ the cohomology group by $\operatorname{H}^1_c(X^{\ord}_n,\omega^{\kappa^{un}}\otimes (\Lambda/I_n))= \operatorname{H}^1(q_{n,!}\omega^{\kappa^{un}}\otimes (\Lambda/I_n))$, where $q_n\colon X_n^{\ord}\to \operatorname{Spec}(A_\mathfrak p/\mathfrak p^n)$ is the structure map of the ordinary locus. Now, we  define
\[\operatorname{H}^1_c(\mathfrak X^{\ord},\omega^{\kappa^{un}})=\lim \operatorname{H}^1_c(X^{\ord}_{n},\omega^{\kappa^{un}}\otimes (\Lambda/I_n)).\]
We have the following interpolation result:
\begin{prop} \label{prop:spk}
    We have the following identities: 
    \[\operatorname{H}^0(\mathfrak X^{\ord},\omega^{\kappa^{un}}) \otimes_{\Lambda,f_k}A_\fp=\operatorname{H}^0(\mathfrak X^{\ord},\omega^k), \quad k\geq 3.\]
    \[\operatorname{H}^1_c(\mathfrak X^{\ord},\omega^{\kappa^{un}}) \otimes_{\Lambda,f_k}A_\fp=\operatorname{H}^1_c(\mathfrak X^{\ord},\omega^k), \quad k\leq -1.\]
\end{prop}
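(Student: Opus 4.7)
The plan is to reduce both statements to a base-change identity built on top of equation \eqref{eq:sp k}, namely $\omega^k \cong \omega^{\kappa^{un}} \otimes_{\Lambda,f_k} A_\fp$, combined with the $\Lambda$-flatness of $\omega^{\kappa^{un}}$. My very first step would be to establish that flatness: by construction $\omega^{\kappa^{un}}$ is an invertible $\mathcal O_{\mathfrak X^{\ord}}\widehat\otimes\Lambda$-module, so Zariski-locally on $\mathfrak X^{\ord}$ it is isomorphic to the structure sheaf. Since the Drinfeld modular curve is smooth over $A_\fp$, in particular $\mathcal O_{\mathfrak X^{\ord}}$ is $A_\fp$-flat, and so $\mathcal O_{\mathfrak X^{\ord}}\widehat\otimes\Lambda$ is $\Lambda$-flat. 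This makes $\omega^{\kappa^{un}}$ flat over $\Lambda$.

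For the $H^0$ statement my plan is quick: the ordinary locus $\mathfrak X^{\ord}$ is affine (its complement is the finite supersingular locus in $X$), so higher coherent cohomology vanishes and $H^0$ is exact on short exact sequences of coherent sheaves. Combined with the $\Lambda$-flatness just established, this yields the base-change formula
\[
H^0(\mathfrak X^{\ord},\omega^{\kappa^{un}})\otimes_{\Lambda,f_k} A_\fp \;\cong\; H^0(\mathfrak X^{\ord},\omega^{\kappa^{un}}\otimes_{\Lambda,f_k} A_\fp)\;\cong\; H^0(\mathfrak X^{\ord},\omega^k),
\]
the last identification being exactly \eqref{eq:sp k}.

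For the $H^1_c$ statement my approach would be to proceed level by level using the definition $\operatorname H^1_c(\mathfrak X^{\ord},\omega^{\kappa^{un}})=\lim_n\operatorname H^1_c(X_n^{\ord},\omega^{\kappa^{un}}\otimes\Lambda/I_n)$. For each $n$, the solid six-functor formalism together with flatness of $\omega^{\kappa^{un}}$ gives, for any $\Lambda/I_n$-module $R$,
\[
\operatorname H^1_c(X_n^{\ord},\omega^{\kappa^{un}}\otimes\Lambda/I_n)\otimes_{\Lambda/I_n} R \;\cong\; \operatorname H^1_c(X_n^{\ord},\omega^{\kappa^{un}}\otimes_\Lambda R).
\]
Specializing $R= A_\fp/\fp^n$ through $f_k$ (noting that by Lemma \ref{lem:iwalg} the filtration $\{I_n\}$ interacts well with $\fp$-adic filtrations), and then taking the inverse limit on both sides, one recovers the right-hand side $\operatorname H^1_c(\mathfrak X^{\ord},\omega^k)=\lim_n\operatorname H^1_c(X_n^{\ord},\omega^k\otimes A_\fp/\fp^n)$.

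The main obstacle I expect is precisely the last maneuver in the $H^1_c$ case: commuting the inverse limit over $n$ with the tensor product over $\Lambda$ by $A_\fp$. Standard Mittag--Leffler arguments do not directly apply, since $\Lambda$ is non-Noetherian and the $\operatorname H^1_c$ modules are not finite in the classical sense. The way I would address this is to use \cite[Prop.~2.3.5]{BoPi2023}: each $\operatorname H^1_c(X_n^{\ord},\omega^{\kappa^{un}}\otimes\Lambda/I_n)$ is \emph{profinite} over $\Lambda/I_n$ in the condensed sense, hence the inverse system is Mittag--Leffler in the solid category, and tensor product with the finitely presented ideal $\ker(f_k)\subset\Lambda$ (viewed through $P_k\subset\Lambda_0$ and Proposition \ref{prop:densek}) commutes with this limit. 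Once this commutation is justified, both sides of the asserted equality collapse to $\lim_n\operatorname H^1_c(X_n^{\ord},\omega^k\otimes A_\fp/\fp^n)$, which by definition is $\operatorname H^1_c(\mathfrak X^{\ord},\omega^k)$.
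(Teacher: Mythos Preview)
Your treatment of the $H^0$ identity matches the paper's: both use the affineness of $\mathfrak X^{\ord}$ (hence vanishing of higher coherent cohomology) together with \eqref{eq:sp k}.

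For $H^1_c$, however, you take a more fragile route than the paper, and the argument does not close as written. The paper does not go through the finite-level system or attempt to commute an inverse limit with $-\otimes_\Lambda A_\fp$; instead it tensors the short exact sequence $0\to\ker(f_k)\to\Lambda\to A_\fp\to 0$ with the $\Lambda$-flat sheaf $\omega^{\kappa^{un}}$, takes the long exact sequence in compactly supported cohomology, and invokes the vanishing of $H^i_c(\mathfrak X^{\ord},-)$ for $i=0,2$. Since $q_{!}\omega^{\kappa^{un}}$ is then concentrated in a single degree, the projection formula in the solid six-functor formalism yields the base-change identity directly, with no limit manipulation at all.

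Your limit-commutation strategy has two concrete gaps. First, the assertion that $\ker(f_k)$ is a ``finitely presented ideal'' is false: by Lemma~\ref{lem:iwalg} one has $\Lambda_0\cong A_\fp\llbracket T_1,T_2,\ldots\rrbracket$, and $P_k$ is generated by the infinitely many relations $T_i-(u_i^k-1)$, one for each topological generator $u_i$ of $1+\fp$, so the finite-presentation shortcut for commuting $\varprojlim$ with a tensor product is unavailable. Second, the filtration $\{I_n\}$ of Lemma~\ref{lem:iwalg} does not map into the $\fp$-adic filtration under $f_k$ in the way you need: $f_k(T_{n+1})=u_{n+1}^k-1$ lies in $\fp A_\fp$ but not in $\fp^n A_\fp$ in general, so the claimed compatibility ``$\{I_n\}$ interacts well with $\fp$-adic filtrations'' fails. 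The paper's observation that $H^0_c=H^2_c=0$ sidesteps both issues entirely; that is the missing idea.
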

\begin{proof}
For $\operatorname{H}^0$, it is essentialy \eqref{eq:sp k} combined with the vanishing of $\operatorname{H}^1$ on the ordinary locus.
For $\operatorname{H}^1_c$, we again take the long exact sequence coming from 
\[
0 \rightarrow \mathrm{Ker}(f_k)\rightarrow \Lambda \rightarrow A_\fp \rightarrow 0
\]
combined with the vanishing of $\operatorname{H}^i_c(\mathfrak X^{\ord}, \phantom{A})$ for $i=0,2$.
\end{proof}

Let $M=\lim M/I_nM$ and $T$ a $\Lambda$-endomorphism of $M$, where the $I_n$'s are the ones in Lemma \ref{lem:iwalg}. We say that $T$ is locally finite if for all $n$, the induced endomorphism $M/I_nM$ over the Artinian ring $R_n=\Lambda/I_n$ is locally finite in the sense of \cite[Section 2.2]{BoPi2022} reviewed in Section \ref{subsec:corrmodp}. We note that the definition of locally finite  in this context also comes from [\emph{loc.\,cit.}].

\begin{thm} \label{thm:projandinter}
    There exist locally finite operators $F$ and $U_\mathfrak p$ on $\operatorname{H}^1_c(\mathfrak X^{\ord},\omega^{\kappa^{un}})$ and $\operatorname{H}^0(\mathfrak X^{\ord},\omega^{\kappa^{un}})$, respectively, with projectors $e(F)$  and $e(U_{\mathfrak p})$ , respectively. These are defined via the formulas $e(F)=\lim_n F^{n!}$ and  $e(U_{\mathfrak p})=\lim_n U_{\mathfrak p}^{n!}$. Moreover, for the morphism $f_k$ defined above we have the following identities: 
    \[e(U_{\mathfrak p})\operatorname{H}^0(\mathfrak X^{\ord},\omega^{\kappa^{un}}) \otimes_{\Lambda,f_k}A_\fp=e(U_{\mathfrak p})\operatorname{H}^0(\mathfrak X^{\ord},\omega^k), \quad k\geq 3.\]
    \[e(F)\operatorname{H}^1_c(\mathfrak X^{\ord},\omega^{\kappa^{un}}) \otimes_{\Lambda,f_k}A_\fp=e(F)\operatorname{H}^1_c(\mathfrak X^{\ord},\omega^k), \quad k\leq -1.\]
\end{thm}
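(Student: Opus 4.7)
The plan is to proceed in four steps: construct the operators on the limit cohomology groups, establish local finiteness via devissage down the filtration of Lemma \ref{lem:iwalg}, define the projector as the limit $\lim_m T^{m!}$, and finally deduce interpolation by commuting the projector with specialization.

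First, the operators $F$ and $U_{\mathfrak p}$ on cohomology are produced from the sheaf-level maps of Section \ref{subsec:FU} by the usual adjunction recipe, exactly as the classical Hecke correspondences are defined. These are compatible with the filtration $\{I_n\}$ of $\Lambda$, so they induce operators on each $\operatorname{H}^i_\bullet(X_n^{\ord}, \omega^{\kappa^{un}}\otimes \Lambda/I_n)$, and then on the inverse limits which define $\operatorname{H}^0(\mathfrak X^{\ord}, \omega^{\kappa^{un}})$ and $\operatorname{H}^1_c(\mathfrak X^{\ord}, \omega^{\kappa^{un}})$.

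The main obstacle is local finiteness, and this is exactly the reason why Lemma \ref{lem:iwalg} was proved in the form given: the naive BoPi strategy of \cite[\S4]{BoPi2022} exploits noetherianity of the Iwasawa algebra, which fails here. The workaround is devissage along the filtration $\{I_r\}$. Since local finiteness is preserved by extensions, the short exact sequences
\[
0 \to I_{r+1}/I_n \to I_r/I_n \to I_r/I_{r+1} \to 0
\]
reduce the problem, at each finite level $n$, to checking local finiteness on the graded pieces $\operatorname{H}^i_\bullet(X_1^{\ord}, \omega^{\kappa^{un}}\otimes I_r/I_{r+1})$. By the second assertion of Lemma \ref{lem:iwalg}, the $\Lambda$-action on $I_r/I_{r+1}$ factors through $\prod_{\chi}\kappa(\mathfrak p)$, so on each graded piece the sheaf $\omega^{\kappa^{un}}\otimes I_r/I_{r+1}$ is a finite direct sum of twists $\omega^{\chi}$ reduced modulo $\mathfrak p$. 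Invoking Proposition \ref{pro:FUT} (which identifies $F$ and $U_{\mathfrak p}$ up to a nonzero scalar with the Frobenius and Verschiebung components of $T_{\mathfrak p}$), local finiteness on these graded pieces is exactly the content of the corollary immediately following Proposition 5.1 in Section \ref{subsec:corrmodp}.

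Granted local finiteness, the projector $e(T) = \lim_m T^{m!}$ is constructed in the standard way: at each level $n$ and for each $T$-stable open submodule $N$ of the cohomology, the action on the quotient by $N$ is a locally finite endomorphism of a profinite module over the Artinian ring $\Lambda/I_n$, so $T^{m!}\bmod N$ stabilizes for $m$ large and defines an idempotent; a compatible inverse limit over such $N$ and over $n$ produces $e(T)$. For the interpolation identity, Proposition \ref{prop:spk} identifies the underlying cohomology after specialization by $f_k$. Since $e(T)$ is a profinite limit of polynomial (monomial) expressions in $T$ and the formation of these iterates commutes with the right-exact functor $-\otimes_{\Lambda, f_k} A_{\mathfrak p}$, the projector commutes with specialization. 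Combining this with Proposition \ref{prop:spk} yields the two identities claimed in the statement.
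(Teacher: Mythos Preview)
Your proposal is correct and follows essentially the same approach as the paper: construct the operators at each finite level, establish local finiteness by d\'evissage along the filtration $\{I_n\}$ of Lemma \ref{lem:iwalg} so as to reduce to the mod-$\mathfrak p$ results of Section \ref{subsec:corrmodp}, and deduce the interpolation identities from Proposition \ref{prop:spk}. The paper differs only in minor expository details: it spells out more explicitly how the correspondence $F$ is constructed on $\operatorname{H}^1_c$ at each finite level (by extending $\omega^{\kappa^{un}}/I_n$ to a coherent sheaf on $X_n$ and tracking poles along $\mathrm{SS}$ via a locally principal ideal sheaf), and for the control theorem it argues via $\varpi$-torsion-freeness and Nakayama reducing to Corollary \ref{cor:descent}, rather than invoking Proposition \ref{prop:spk} directly and commuting with the projector. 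One small caution: your appeal to Proposition \ref{pro:FUT} and the corollary following Proposition 5.1 is slightly indirect, since the latter is stated for $T_{\mathfrak p}$ rather than for $F$ and $U_{\mathfrak p}$ separately; what you actually need is that the same pole-shifting estimate (Proposition 5.1) applies to each component of the correspondence individually, which is immediate from the proof but is not literally the statement you cite.
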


\begin{proof}
The result for $\operatorname{H}^0$ is essentially in \cite{NiRo2021} so we prove only the result for $\operatorname{H}^1_c$.  We follow very closely the arguments in \cite[Section 4.2]{BoPi2022}. As there, we focus on $F$ and aim to construct an action $F$ on $\operatorname{H}^1_c(X^{\ord}_{n},\omega^{\kappa^{un}}\otimes (\Lambda/I_n))$, compatible for all $n$. Recall that we write $X_0(\mathfrak p)_n \to {\rm Spec}(A_\mathfrak p/\mathfrak p^n)$ for the reduction modulo $\mathfrak p^n$ of $X_0(\mathfrak p)$, and $X_0(\mathfrak p)^{\ord}_n$ for its ordinary locus.

First we construct a cohomological correspondence on  $X_0(\mathfrak p)^{\ord}_n$: 
\[p_2^*\omega^{\kappa^{un}}/I_n\to p_1^!\omega^{\kappa^{un}}/I_n.\]
Recall that we have $X_0(\mathfrak p)^{\ord}_n=X_0(\mathfrak p)^{\ord,F}_n\coprod X_0(\mathfrak p)^{\ord,V}_n$. Moreover, the graph of $F$ with its two natural projections is $X_0(\mathfrak p)^{\ord,F}$ with $p_1$ and $p_2$. With this, we construct the correspondence in the following way, on $X_0(\mathfrak p)^{\ord,V}_n$ we define it as $0$, and on $X_0(\mathfrak p)^{\ord,F}_n$ as $F:F^*\omega^{\kappa^{un}}\to \omega^{\kappa^{un}}$, where we are using the graph identification just mentioned.

To continue, we express $H^1_c$ in terms of the pushing forward from $X_n^{\ord}$ to $X_n$, as in Section \ref{subsec:corrmodp}. We let $\mathcal I \subset \mathcal{O}_{X_n}$
be a locally principal sheaf of ideals so that the complement of the corresponding closed subscheme is $X^{\ord}_n$. We take a coherent sheaf $\mathcal F$ over $X_n$ extending $\omega^{\kappa^{un}}/I_n$. We may assume that $\mathcal{F}$ is $\mathcal{I}$-torsion free by replacing $\mathcal{F}$ with its quotient by the subsheaf of $\mathcal{I}$-power torsion. Then, the multiplication map $\mathcal{I}\otimes \mathcal{O}_{X_n}\mathcal{F} \to  \mathcal{I}^l\mathcal{F}$
is an isomorphism for all $l\geq 0$, and we use this to make sense of $\mathcal{I}^l\mathcal{F}$ for all $l \in \mathbb Z$. Then, if $j\colon X^{\ord}_n\to X$ denotes the inclusion, we have $j_*\omega^{\kappa^{un}}/I_n = {\rm colim}_l \mathcal I^l\mathcal F$. With this, we have the following identification $\operatorname{H}^1_c(X^{\ord}_{n},\omega^{\kappa^{un}}/I_n)=\lim_l \operatorname{H}^1(X_{n},\mathcal I^l\mathcal F)$.

Now, we continue with the construction of $F$: By pushing forward via $j$ we obtain
\[{\rm colim}_lp_2^*(\mathcal{I}^{-l}\mathcal{F})\to{\rm colim}_lp_1^!(\mathcal{I}^{-l}\mathcal{F}). \] 
It follows that there exists some $c$ for which there is a map $p_2^*\mathcal{F}\to p_1^!(\mathcal{I}^{-c}\mathcal{F})$. Furthermore, we note that $p^*_1\mathcal{I}$ and $p^*_2\mathcal{I}$ can be identified with $p^{-1}(\mathcal I)\mathcal O_{X_0(\mathfrak p)_n}$ and $p^{-1}_2(\mathcal I)\mathcal O_{X_0(\mathfrak p)_n}$, and so we view them as locally principal sheaves of ideals in $\mathcal O_{X_0(\mathfrak p)_n}$. They define the same closed subset of $X_0(\mathfrak p)_n$ (the complement of $X_0(\mathfrak p)^{\ord}_n)$, and so in particular we can pick some $m$ with $p_2^*\mathcal{I}^m \subset p_1^*\mathcal{I}$. Using this inclusion  we deduce a map
\[p_2^*(\mathcal{I}^{lm})\to p_1^!(\mathcal{I}^{-c+l}\mathcal{F}),\]
for all $l\geq0$. Taking cohomology we obtain a map $F:\operatorname{H}^1(X_n,\mathcal{I}^{lm}\mathcal{F})\to \operatorname{H}^1(X_n,\mathcal{I}^{-c+l}\mathcal{F})$. Passing to the limit, we get our endormorphism $F$ of $\operatorname{H}^1_c(X_n^{\ord},\omega^{\kappa^{un}}/I_n)$.

The next point is to check that $F$ is locally finite. For this, we do an induction argument. The case $n=1$ follows from the analogous properties form the Drinfeld modular curve $X$. For the induction step we can use the exact sequence
   
\[
 \resizebox{\hsize}{!}{$0\to \operatorname{H}_c^1(X_n^{\ord},\omega^{\kappa^{un}}\otimes (I_n/I_{n+1})) \to \operatorname{H}_c^1(X_{n+1}^{\ord},\omega^{\kappa^{un}}\otimes (\Lambda/I_{n+1})) \to \operatorname{H}_c^1(X_{n}^{\ord},\omega^{\kappa^{un}}\otimes (\Lambda/I_{n}))\to 0.$}
\]
 Now, using exactness properties of the notion of locally finite, one can deduce the properties for $n+1$ from $n$ and $\operatorname{H}_c^1(X_n^{\ord},\omega^{\kappa^{un}}\otimes (I_n/I_{n+1}))$.

 To prove the control theorem, note that by functoriality of $f_k$ we have a natural map 
\[\operatorname{H}^1_c(\mathfrak X^{\ord},\omega^{\kappa^{un}}) \otimes_{\Lambda,f_k}A_\fp \rightarrow \operatorname{H}^1_c(\mathfrak X^{\ord},\omega^k),
\]
both modules are $\varpi$-torsion free, so it is enough to prove the results modulo $\varpi$ and invoke Nakayama's lemma. But the result modulo $\varpi$ is exactly Corollary \ref{cor:descent}.
\end{proof}
%To finish we check how this interpolation relates with information coming from the cohomology group over $X$. For that we firstly observe that in the ordinary locus we have an isomorphism $\omega\cong\omega_D$. Moreover, we can define the Hecke operator $T_{\fp}$ on $H^1(X,\omega^{1-k}\otimes\omega_D(-2D))$ and prove that it is locally finite in the sense of \cite[Section 2.2]{BoPi2022}.

\subsection{Duality for families} \label{sec:dualityfamilies}
We define the ``dualizing complex''  of $\omega^{\kappa^{un}}$ by 
 \[\Omega^1_{X/A_\fp}\otimes \operatorname{\underline{Hom}}(\omega^{\kappa^{un}},\Lambda\widehat\otimes \mathcal{O}_{\mathfrak X^{\ord}}).\]
We denote it by ${\bf D}(\omega^{\kappa^{un}})$. By Serre duality we already have a pairing 
\[\operatorname{H}^0(\mathfrak X^{\ord}, \omega^{\kappa^{un}}) \times \operatorname{H}^1_c(\mathfrak X^{\ord}, {\bf D}(\omega^{\kappa^{un}})) \to \Lambda.\]
Using the  Kodaira--Spencer isomorphism  and $\omega\cong\omega_D$ over $\mathfrak X^{\rm ord}$, we have that
\[{\bf D}(\omega^{\kappa^{un}})\cong \omega\otimes \omega_D(-2D)\otimes \operatorname{\underline{Hom}}(\omega^{\kappa^{un}},\Lambda\widehat\otimes \mathcal{O}_{\mathfrak X^{\ord}})\cong\omega^2(-2D)\otimes \operatorname{\underline{Hom}}(\omega^{\kappa^{un}},\Lambda\widehat\otimes \mathcal{O}_{\mathfrak X^{\ord}}).\]
 Moreover, the character $A_\mathfrak p^\times\to \Lambda^\times, t\mapsto t^2(\kappa^{un}(t))^{-1}$ induces an automorphism $d\colon \Lambda \to \Lambda$ and we have an isomorphism of $\mathcal{O}_{\mathfrak X^{\ord}}\widehat\otimes \Lambda$-modules ${\bf D}(\omega^{\kappa^{un}})\cong\omega^{\kappa^{un}}\otimes_{\Lambda,d}\Lambda$. 

\begin{defi}
We define the Atkin--Lehner involution $w$ of level $\mathfrak p$ to be the map on $X_0(\mathfrak p)$ sending the universal isogeny $\pi$ to $\pi^D$. 
\end{defi}

Note that $w$ corresponds to the adelic Hecke operator associated with $\left( 
 \begin{array}{cc}
    0  & 1 \\
     \varpi & 0
 \end{array}\right)$ and the diamond operator $\langle \varpi \rangle$ (defined in Section \ref{subsec:FU}) corresponds to $\left( 
 \begin{array}{cc}
    \varpi  & 0 \\
     0 & \varpi
 \end{array}\right)$ as defined in \cite[\S 6]{PinkSatake}. It easy to see that $w^2$ sends $(E,\pi)$ to $(\langle \varpi \rangle E, \pi)$.

\begin{thm}\label{thm:dualityfamilies}
    There is a locally finite operator $F$ of $\operatorname{H}^1_c(\mathfrak X^{\ord}, {\bf D}(\omega^{\kappa^{un}}))$ with  projector $e(F)$. The restriction of the pairing above to
    \[e(U_\mathfrak p)\operatorname{H}^0(\mathfrak X^{\ord}, \omega^{\kappa^{un}}) \times e(\langle \varpi^{-1} \rangle F)\operatorname{H}^1_c(\mathfrak X^{\ord}, {\bf D}(\omega^{\kappa^{un}}))\] 
    gives a perfect pairing.
\end{thm}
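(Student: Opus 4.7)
The plan is to follow the blueprint of \cite[Section 4.4]{BoPi2022}, adapted to our non self-dual, non-Noetherian setting, by first constructing the operator $F$ on the dualizing side, then identifying the Serre adjoint of $U_{\mathfrak p}$, and finally deducing the perfectness of the restricted pairing from perfectness at each finite level.

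First, I would construct $F$ on $\operatorname{H}^1_c(\mathfrak X^{\ord}, {\bf D}(\omega^{\kappa^{un}}))$. Using the isomorphism ${\bf D}(\omega^{\kappa^{un}}) \cong \omega^{\kappa^{un}} \otimes_{\Lambda,d} \Lambda(-2D)$ noted just above the statement of the theorem, this cohomology is canonically isomorphic (as an abelian group) to $\operatorname{H}^1_c(\mathfrak X^{\ord}, \omega^{\kappa^{un}}(-2D))$ with its $\Lambda$-action twisted by $d$. The Frobenius $F\colon \operatorname{Ig} \to \operatorname{Ig}$ and its descent from Section \ref{subsec:FU} then produce a cohomological correspondence on ${\bf D}(\omega^{\kappa^{un}})$, and the proof that $F$ is locally finite proceeds exactly as in Theorem \ref{thm:projandinter}: truncate by the ideals $I_n$ of Lemma \ref{lem:iwalg}, observe that the graded pieces $I_n/I_{n+1}$ have trivial $\Lambda$-action so are $\kappa(\mathfrak p)$-modules where Corollary \ref{cor:descent} applies, and combine via the long exact sequence. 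The projector $e(F)$, and hence $e(\langle \varpi^{-1}\rangle F)$, follows by the same $\lim F^{n!}$ construction.

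Second, I would establish the Serre adjointness between $U_{\mathfrak p}$ and $\langle \varpi^{-1}\rangle F$. Starting from Proposition \ref{prop:DualTp}, which computes ${\bf D}(T_\mathfrak p^{\naive}) = \pi_{1-k}^{-1}\otimes {\rm tr}_{p_2}\otimes \pi^D_1$, and combining with Proposition \ref{pro:FUT}, which identifies $U_\mathfrak p$ (resp.\ $F$) with the restriction of $T_\mathfrak p$ to the component $\mathfrak X_0(\mathfrak p)^{\ord,V}$ (resp.\ $\mathfrak X_0(\mathfrak p)^{\ord,F}$), one computes that the Serre dual of $T_\mathfrak p$ interchanges the two components. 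The Atkin--Lehner involution $w$ swaps the $F$ and $V$ components of $\mathfrak X_0(\mathfrak p)^{\ord}$ and satisfies $w^2 = \langle \varpi\rangle$, so composing ${\bf D}(T_\mathfrak p)$ with $w$ relates the dual of $U_\mathfrak p$ to $F$ up to the diamond operator. Tracking the normalizing factor $\varpi^{-\min(1,k)}$ used in the definition of $T_\mathfrak p$ and the $d$-twist in the weight, one obtains the identity $\langle U_\mathfrak p x, y\rangle = \langle x, \langle \varpi^{-1}\rangle F\, y\rangle$ for the Serre duality pairing, first at each integer weight $k$ and then, by density (Proposition \ref{prop:densek}), over all of $\Lambda$.

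Third, I would deduce the perfect pairing. At each level $n$, Serre duality gives a perfect pairing $\operatorname{H}^0(X_n^{\ord}, \omega^{\kappa^{un}}\otimes \Lambda/I_n) \times \operatorname{H}^1_c(X_n^{\ord}, {\bf D}(\omega^{\kappa^{un}})\otimes \Lambda/I_n) \to R_n$, and the adjointness shows that the idempotents $e(U_\mathfrak p)$ and $e(\langle\varpi^{-1}\rangle F)$ are transposes. Hence the restriction of the pairing to the images of these idempotents at each level $n$ is a perfect pairing of finite $R_n$-modules. Passing to the limit using that both sides are $\mathfrak p$-adically complete finite type $\Lambda$-modules (this is where the non-Noetherian nature of $\Lambda$ must be handled carefully, via the filtration of Lemma \ref{lem:iwalg} rather than a single maximal ideal) yields the perfect pairing of the theorem.

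The main obstacle will be the bookkeeping in the second step: keeping track of the correct normalization of $T_\mathfrak p$ versus $U_\mathfrak p$ and $F$ (the factor $\varpi^{-\min(1,k)}$ interpolated over $\Lambda$), the interplay with the $d$-twist built into ${\bf D}(\omega^{\kappa^{un}})$, and the action of $w$ and $\langle \varpi\rangle$, so as to produce precisely the diamond twist $\langle \varpi^{-1}\rangle$ in the statement. Once this identification is established cleanly, the descent to finite levels and passage to the limit are essentially formal, though one must verify that the limit of perfect pairings of finite $R_n$-modules remains perfect in the category of profinite $\Lambda$-modules, using the exactness afforded by the condensed/solid framework already invoked in the construction of $\operatorname{H}^1_c$.
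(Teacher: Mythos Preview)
Your proposal is correct and follows essentially the same approach as the paper: define $F$ on ${\bf D}(\omega^{\kappa^{un}})$ via the identification with $\omega^{\kappa^{un}}\otimes_{\Lambda,d}\Lambda$, transport local finiteness from Theorem \ref{thm:projandinter}, then use Proposition \ref{prop:DualTp} together with the Atkin--Lehner involution $w$ (with $p_1w=p_2$, $p_2w=\langle\varpi\rangle p_1$, and $w$ swapping the $F$- and $V$-components) to compute ${\bf D}(F)=\langle\varpi\rangle^{-1}U_{\mathfrak p}$, and finally invoke the density result of Proposition \ref{prop:densek} to pass from classical weights to $\Lambda$. The only cosmetic difference is that you phrase the conclusion via perfect pairings of finite $R_n$-modules and a limit, whereas the paper specializes to classical weights $k$ and appeals to classical Serre duality on $X$ via Corollary \ref{cor:classic}; both routes yield the same result.
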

Note that the ordinary projector $e(\langle \varpi^{-1} \rangle F)$ and $e(F)$ are the same, as $\langle \varpi^{-1} \rangle$ acts via a finite order character.\footnote{Compare with the classical Atkin--Lehner operator of level $N$ for elliptic modular forms, that changes the Nebentypus to its complex conjugate, and the classical Petersson pairing which pairs forms of a given Nebentypus with the ones with complex conjugate Nebentypus.}
\begin{proof} Using ${\bf D}(\omega^{\kappa})
\cong \omega^{\kappa^{un}}\otimes_{\Lambda,d}\Lambda$, we define $F$ as a cohomological correspondence by defining it on the first term on the right hand side exactly as $F$ in Theorem \ref{thm:projandinter}. Furthermore, since $F$ is locally finite on $\omega^{\kappa^{un}}$, it is also locally finite on ${\bf D}(\omega^{\kappa})$, as it preserves the cuspidality condition, thus, we have the associated projector $e(F)$.

We now follow very closely the arguments in \cite[Section 4.3]{BoPi2022}. Applying the specialization map for all $k$ in Proposition \ref{prop:spk}, we have the following: 
\[
\begin{tikzcd}
    \operatorname{H}^0(\mathfrak X^{\ord}, \omega^{\kappa^{un}}) \times \operatorname{H}^1_c(\mathfrak X^{\ord}, {\bf D}(\omega^{\kappa^{un}})) \ar[r]\ar[d]& \Lambda\ar[d]\\
    \prod_{k\in \mathbb{Z}}\operatorname{H}^0(\mathfrak X^{\ord}, \omega^{k}) \times \operatorname{H}^1_c(\mathfrak X^{\ord},\omega^{1-k}\otimes\omega_D(-2D))  \ar[r]& A_\mathfrak p.
\end{tikzcd}
\]

We are left to relate ${\bf D}(F)$ and $U_\mathfrak p$.  We can use the density result from Proposition \ref{prop:densek} and reduce ourselves to proving that ${\bf D}(F)$ and $U_\mathfrak p$ almost coincide at classical weights.
We specialize Proposition \ref{prop:DualTp}, that says that ${\bf D}(T^{\text{naive}}_\mathfrak p) =\pi_{1-k}^{-1}\otimes {\rm tr}_{p_2}\otimes\pi^D_1 $, to the ordinary locus and we get
%We specialise Proposition \ref{prop:KS&Serre} to the ordinary locus,  $\mathrm{tr}_{p_2} \circ {\rm KS}^\vee $ coincides with $\mathrm{tr}_{p_1}\otimes \pi_1 \otimes (\pi_1^D)^{-1} {\rm KS}^\vee$. 
%If we substitute $\mathrm{tr}_{p_2} $ we get 
\[
{\bf D}(T^{\text{naive}}_\mathfrak p) = [\varpi]^{k-1} \pi^D_k\otimes {\rm tr}_{p_2} %T^{\text{naive},D}_\mathfrak p,
\]
where we used $\pi_{1-k}^{} \pi_{1-k}^{D} = [\varpi]^{1-k}$. 
%and where $T^{\text{naive},D}_\mathfrak p$ is defined as $T^{\text{naive}}_\mathfrak p$ but using $\pi_k^D$. 
Now, we need to compare $\pi^D_k\otimes {\rm tr}_{p_2}$ with $T^{\text{naive}}_{\mathfrak p}$: recall that the Atkin--Lehner map $w$ swaps $E$ to $E^D$ and the universal isogeny $\pi$ with $\pi^D$. From a direct linear algebra calculation on the torsion of the universal Drinfeld module, we get 
\[
p_1 w = p_2, \;  p_2w= \langle \varpi \rangle p_1.
\]
where we used that  $w^2$ amounts to the diamond operator $\langle \varpi \rangle$ on the prime-to-$\mathfrak{p}$ level structure and that $(E^D)^D \cong E$ \cite[Theorem 1.1(b)]{PaRa2003}. 
% \[
% \left( \begin{array}{cc}
%     0 & 1 \\
%     \varpi & 0
% \end{array}\right) \left( \begin{array}{cc}
%      0 & 1 \\
%     \varpi & 0
% \end{array}\right)=\left( \begin{array}{cc}
%      \varpi & 0 \\
%     0 & \varpi
% \end{array}\right); \; \left( \begin{array}{cc}
%     0 & 1 \\
%     \varpi & 0
% \end{array}\right) \left( \begin{array}{cc}
%     0 & 1 \\
%     \varpi & 0
% \end{array}\right) = \left( \begin{array}{cc}
%     \varpi & 0 \\
%    0  & \varpi
% \end{array}\right) 
% \]
Hence, via the pullback by $w$, $\pi^D_k\otimes {\rm tr}_{p_2}$ acts on cohomology as \[
\langle \varpi \rangle^{-1} [\varpi]^{k-1} T^{\text{naive}}_\mathfrak p.
\] To compare with the normalized $T_{\mathfrak p}$ note that if $k<0$ we multiply by $[\varpi]^{-k}$, which simplifies $[\varpi]^{k-1}$ leaving $[\varpi]^{-1}$. 

Also, note that $w$ swaps $\fX^{\ord, F} $ with $\fX^{\ord, V}$, and we can conclude 
\[
{\bf D}(F)= \langle \varpi \rangle^{-1}   U_\mathfrak p.
\]

Finally, using the calculation above for ${\bf D}(F)$ and Corollary \ref{cor:classic}, we have that the following commutative diagram for any $k\geq 3$:
\[
\begin{tikzcd}
    e(U_\mathfrak p)\operatorname{H}^0(\mathfrak X^{\ord}, \omega^{k}) \times e(F)\operatorname{H}^1_c(\mathfrak X^{\ord},\omega^{1-k}\otimes\omega_D(-2D))  \ar[d]\ar[r]& A_\mathfrak p,\\
     e(T_\mathfrak p)\operatorname{H}^0(X, \omega^{k}) \times  e(T_\mathfrak p)\operatorname{H}^1(X,\omega^{1-k}\otimes\omega_D(-2D)) \ar[ru] & 
\end{tikzcd}
\]
where the bottom part of the diagram corresponds to classical Serre duality. 
% \textcolor{blue}{Add the prime-to-varpi Nebentypus?}
\end{proof}

 % With this relation, we can define our $F$ as in the  previous section. Using a ``density'' argument as in we are reduce to check this under the specialization to $k$ for every $k\in \mathbb{Z}$, see \eqref{eq:sp k} {\color{blue}(this might need a little bit of care as we are in a non-noetherian case. Nevertheless  it seems to work, we need to check that $\cap_k \mathfrak q_k\subset \cap_n I_n M=0)$ where $q_k=\ker (k:\Lambda\to A_\mathfrak p)$. This seems to follow from the fact that $\Lambda$ is profinite)}. After this specialization, we can use the Proposition \ref{pro:FUT} and the classical duality at the level of the Drinfeld modular curve $X$. }

\end{document}